\documentclass[12pt]{article}
\usepackage{amsmath}
\usepackage{amsfonts}
\usepackage{amsthm}
\usepackage{amssymb}
\usepackage{graphicx,psfrag,epsf}
\usepackage{enumerate}
\usepackage{natbib}
\usepackage{url} 



\usepackage{tikz}
\usetikzlibrary{matrix}
\usetikzlibrary{calc}
\usetikzlibrary{decorations.pathreplacing}

\usepackage[boxed, ruled, vlined]{algorithm2e}

\newcommand{\blind}{0}
\newcommand\numberthis{\addtocounter{equation}{1}\tag{\theequation}}

\newtheorem{theorem}{Theorem}[section]
\newtheorem{definition}{Definition}[section]
\newtheorem{corollary}{Corollary}[section]
\newtheorem{remark}{Remark}[section]
\newtheorem{assumption}{Assumption}

\setcounter{skassumption}{-1}

\setcounter{kuassumption}{-1}

\newtheorem{lemma}{Lemma}[section]
\newtheorem{aplemma}{Lemma}[section]

\begin{document}


\if0\blind
{
  \title{\bf Projection Pursuit for non-Gaussian Independent Components \thanks{Joni Virta is PhD student, University of Turku, 20014, Finland (E-mail: \textit{joni.virta@utu.fi}). Klaus Nordhausen is Postdoctoral Researcher, University of Turku, 20014, Finland (E-mail: \textit{klaus.nordhausen@utu.fi}). Hannu Oja is Professor Emeritus, University of Turku, 20014, Finland (E-mail: \textit{hannu.oja@utu.fi}). This work was supported by Academy of Finland (grant 268703). The authors wish to thank Jari Miettinen who provided the code used for the comparison of the individual signal estimates in Section \ref{sec:simu}.}\ \thanks{This work is partially based on the unpublished manuscript \cite{virta2015joint} available as an arXiv preprint, arXiv:1505.02613.}}
  \author{Joni Virta, Klaus Nordhausen and Hannu Oja \hspace{.2cm}\\
    Department of Mathematics and Statistics,\\University of Turku, 20014, Finland}
  \maketitle
} \fi

\if1\blind
{
  \bigskip
  \bigskip
  \begin{center}
    {\LARGE\bf Projection Pursuit for non-Gaussian Independent Components}
\end{center}
  \medskip
} \fi

\begin{abstract}
In independent component analysis it is assumed that the observed random variables are linear combinations of latent, mutually independent random variables called the independent components. Our model further assumes that only the non-Gaussian independent components are of interest, the Gaussian components being treated as noise. In this paper projection pursuit is used to extract the non-Gaussian components and to separate the corresponding signal and noise subspaces. Our choice for the projection index is a convex combination of squared third and fourth cumulants and we estimate the non-Gaussian components either one-by-one (deflation-based approach) or simultaneously (symmetric approach). The properties of both estimates are considered in detail through the corresponding optimization problems, estimating equations, algorithms and asymptotic properties. Various comparisons of the estimates show that the two approaches separate the signal and noise subspaces equally well but the symmetric one is generally better in extracting the individual non-Gaussian components.
\end{abstract}

\noindent%
{\it Keywords:}  FastICA, independent component analysis, kurtosis, non-Gaussian component analysis, skewness
\vfill

\newpage

\section{INTRODUCTION}

\subsection{Blind source separation model and its extensions}

The basic \textit{blind source separation} (BSS) model assumes that the observed random vectors $\textbf{x}_i \in \mathbb{R}^p$ are linear combinations of some unobservable random vectors $\textbf{z}_i \in \mathbb{R}^p$, $i=1,\ldots,n$, the estimation of which is the main objective. This can be formalized as
\begin{equation}\label{eq:bss_model}
\textbf{x}_i = \boldsymbol{\mu} + \boldsymbol{\Omega} \textbf{z}_i, \quad i=1,\ldots,n,
\end{equation}
where $\boldsymbol{\mu}$ is a location shift and $\boldsymbol{\Omega} \in \mathbb{R}^{p \times p}$ is a non-singular \textit{mixing matrix}.
In \textit{independent component analysis} (ICA) it is further assumed that the random vector $\textbf{z}_i$ has mutually independent and standardized components and that at most one of the components  is normally distributed. The constraint on non-Gaussianity is needed as otherwise the rotational invariance of the standard multivariate Gaussian distribution makes the model ill-defined. For an overview of ICA and BSS models, see e.g. \cite{comon2010handbook}.

However, if it is reasonable to assume the existence of more than one source of noise it might be too strict to restrict the number of Gaussian components to at most one. In the so-called \textit{non-Gaussian component/subspace analysis (NGCA)} \citep{blanchard2005non} the assumptions in the model \eqref{eq:bss_model} are relaxed by allowing more sources of Gaussian noise. That is, as formulated in \cite{theis2011uniqueness}, one assumes that for some $d$, $0\le d\le p$ the vector $\textbf{z}_i$ consists of a $d$-dimensional non-Gaussian subvector (that spans the \textit{signal space}) and a $(p-d)$-dimensional Gaussian subvector (that spans the \textit{noise space}) that are independent of each other. 
In this model the signal and the noise subspaces are well defined and estimable but the individual components are not. A further distinctive property of NGCA is that the dimension of the signal subspace is usually assumed to be known. For other related or similar models, see \citet{BonhommeRobin2009}, \citet{comon2010handbook}, \citet{WoodsHansenStrother2015} and \citet{RiskMattesonRuppert2015}.

In this paper we combine the ICA and NGCA approaches and require that in \eqref{eq:bss_model} exactly $p-d$ of the mutually independent components of $\textbf{z}_i$ are Gaussian. The objective is then to estimate the signal and noise subspaces (as in NGCA) as well as the individual signal components (as in ICA). In the estimation we use \textit{projection pursuit} (PP) with convex combinations of squared third and fourth cumulants as projection indices. We derive the properties of the estimates assuming that $d$ is known, but also discuss its estimation in Section \ref{sec:sub}.

\subsection{Projection pursuit in ICA}

Projection pursuit is a popular method that finds hidden structures in multivariate data by searching for one- or low-dimensional projections of interest.
This is done by finding  projections
that maximize the value of an objective function, the so-called projection index. The classical measures of skewness and kurtosis, the third and fourth moments of a random variable after standardization, have been widely used for this purpose. \cite {Huber:1985} considered various projection indices with heuristic arguments that non-Gaussian linear combinations are the most interesting. His indices were ratios of two dispersion functionals thus measuring kurtosis with the classical kurtosis measure as a special case. \cite{pena2001cluster} used projection pursuit for hidden cluster identification with the classical kurtosis measure. For early contributions on projection pursuit see also \cite{FriedmanTukey:1974} and \cite{jones1987projection}.

In the engineering literature \cite{HyvarinenOja:1997} were the first to propose a projection pursuit approach for independent component analysis with the absolute value of the excess kurtosis, the fourth cumulant of a standardized random variable, as the projection index and considered later an extension with a choice among several alternative measures of non-Gaussianity, including the absolute value of the classical skewness, the third cumulant of a standardized random variable. The approach is called {\it deflation-based  FastICA} or {\it symmetric FastICA} depending on whether the independent components are estimated one-by-one or simultaneously, respectively. These two versions of FastICA can actually be seen as the optimization of different norms of the same vector of component-wise criterion values, namely the repeated maximization of $L_\infty$-norm and the maximization of $L_1$-norm, respectively. In this paper we choose however to work with the $L_2$-norm, firstly because of its analytical tractability over other choices of $L_p$-norms, and secondly because of its connection to the popular JADE estimate, see Corollary \ref{cor:asymp_equiv}. The deflation-based approach also has the useful property that under our model the estimation of each sequential component can be seen as a test for normality, allowing us to do inference on the dimension $d$ of the signal subspace, see Section \ref{sec:sub}. 
Although the use of the $L_2$-norm was mentioned already in \cite{hyvarinen1999fast} and \cite{comon2010handbook} the idea was not carried any further. For recent discussions of FastICA methods, see \cite{Ollila2010}, \cite{nordhausen2011deflation},  \cite{MiettinenNordhausenOjaTaskinen:2014},  \cite{miettinen2014fourth}, \citet{Wei2015}. For other classical approaches to ICA, see e.g. \cite{cardoso1989source}, \cite{cardoso1993blind} and the application of the latter to NGCA in \citet{kawanabe2005linear}.

In the engineering literature the ICA procedures are often seen more as numerical algorithms than as estimates of certain population quantities and considering their statistical properties is usually neglected. Recently, however, also more statisticians have become interested in the problem. \citet{ChenBickel:2006} and \citet{SamworthYuan:2012}, for example, developed estimates that need only the existence of first moments and rely on efficient nonparametric estimates of the marginal densities. Efficient estimation methods based on residual signed ranks and residual ranks have also been developed recently by \citet{IlmonenPaindaveine:2011} and \citet{HallinMehta:2015}. For other approaches see also, for example, \citet{KarvanenKoivunen:2002, HastieTibshirani2003, matteson2016independent}.


\subsection{The joint use of multiple cumulants}

Regarding signal separation, one limitation to the previously discussed methods is set by the measure of non-Gaussianity itself; if some signal component has a distribution with the same criterion value as the normal distribution (usually zero) it is treated as noise. This is where the benefits of our preferred approach show the most; jointly using both third and fourth cumulants means that the signal components can have either zero skewness or zero excess kurtosis and still be estimated. Furthermore, we show that the method is Fisher consistent under suitable moment conditions. Theoretically any number of univariate cumulants of order three or higher could be used together to find the non-Gaussian signals, see \citet{moreau2001generalization}.

The joint use of third and fourth cumulants in ICA-type problems has also been previously discussed in the literature: \cite{jones1987projection} approximate the entropy of a distribution suitably close to the Gaussian distribution with a particular weighted sum of squared skewness and excess kurtosis, a special case of the projection index we use, see also \cite{amari1996new} for similar constructions; \cite{KarvanenKoivunen:2002, karvanen2002adaptive} use the method of moments to estimate the source score functions either from Pearson's system of distributions or the extended generalized lambda distribution (EGLD) by matching moments (or L-moments) up to the fourth; \cite{karvanen2004independent} propose using a weighted sum of the absolute values of skewness and excess kurtosis as an objective function in minimization of mutual information; \cite{comon2006blind} used the characteristic function to solve an independent component problem with more latent variables than observed variables; \cite{li2011joint} proposed using a joint diagonalization of second or higher order cumulant matrices in the context of joint blind source separation (JBSS) and \cite{comon2015polynomial} considered the simultaneous decomposition of multiple symmetric tensors of different orders.

\subsection{The structure of the paper}
The paper is structured as follows; we begin in Section \ref{sec:nota} by providing some notation. Section \ref{sec:mod_ssf} introduces the model along with the relevant assumptions and also discusses the concepts of signal separation functionals and affine equivariance. Sections \ref{sec:sep} and \ref{sec:sym} then consider the estimation of the signals both separately and simultaneously via projection pursuit with the previously unexplored use of the $L_2$-norm. A thorough discussion is provided including also the asymptotic behaviors of both considered methods. In Sections \ref{sec:simu} and \ref{sec:sub} the presented procedures are compared in their ability to extract single signals and the entire signal subspace. In the latter some thought on estimating the value of $d$ is also given. We end with some discussion on the results and prospective work in Section \ref{sec:conc}. The proofs are reserved for the Appendix. 

The simulations and computations were performed using R 3.2.3 \citep{R} and additionally the packages \textit{clue} \citep{Rclue}, \textit{ggplot2} \citep{Rggplot2}, \textit{ICS} \citep{Rics}, \textit{JADE} \citep{Rjade}, \textit{Rcpp} \citep{Rrcpp} and \textit{Rcpp\-Armadillo} \citep{Rrcpparmadillo}.

\section{NOTATION} \label{sec:nota}

\subsection{Univariate moments}

For a univariate random variable $x$, we write $x_{st}:=(x-E(x))/\sqrt{Var(x)}$ for its standardized version. The classical skewness, kurtosis and excess kurtosis of $x$ are then
\[
\gamma(x):=E\left(x_{st}^3\right),\ \ \beta(x):=E\left(x_{st}^4\right)\ \ \mbox{and}\ \
\kappa(x):=\beta(x)-3.
\]
Note that the measures $\gamma(x)$ and $\kappa(x)$ are the third and fourth cumulants of the standardized variable $x_{st}$. For symmetric random variables $\gamma(x)=0$ and
for the normal distribution $\kappa(x)=0$.

Throughout the paper we assume that $ \textbf{z}_1,\ldots,\textbf{z}_n$ is a random sample from a $p$-variate distribution of $\textbf{z}$  with $E(\textbf{z})= \mathbf{0}$ and $Cov(\textbf{z})= \textbf{I}_p$ and that the $p$ components of $\textbf{z}$ are mutually independent. As the methods considered are essentially moment-based we use, for all $k=1,\ldots,p$, the following shorthands for marginal moments.
\begin{align*}
\gamma_k &:= E(z_{ik}^3), \quad &\beta_k &:= E(z_{ik}^4), \quad  &\kappa_k &:= E(z_{ik}^4) - 3, \\
\nu_k &:= E(z_{ik}^4) - 1, \quad &\omega_k &:= E(z_{ik}^6) - E(z_{ik}^3)^2, \quad &\eta_k &:= E(z_{ik}^5) - E(z_{ik}^3).
\end{align*}

Assuming a fixed weight parameter $\alpha \in [0, 1]$, we will see in Sections \ref{sec:sep} and \ref{sec:sym} that the previous moments act as building blocks for the following asymptotic variances of the elements of our unmixing matrix estimates.
\begin{align*}
A_k &:= \frac{\alpha_1^2 \zeta^{(3)}_{k} + 2 \alpha_1 \alpha_2 \zeta^{(34)}_{k} + \alpha_2^2 \zeta^{(4)}_{k}}{(\alpha_1 \gamma_k^2 + \alpha_2 \kappa_k^2)^2}, \\
B_{kl} &:= \frac{\alpha_1^2 (\zeta^{(3)}_k + \zeta^{(3)}_l + \gamma_l^4) + 2 \alpha_1 \alpha_2 (\zeta^{(34)}_k + \zeta^{(34)}_l + \gamma_l^2\kappa_l^2) + \alpha_2^2 (\zeta^{(4)}_k + \zeta^{(4)}_l + \kappa_l^4)}{(\alpha_1 (\gamma_k^2 + \gamma_l^2) + \alpha_2 (\kappa_k^2 + \kappa_l^2))^2}, \\
D_k &:= \frac{\kappa_k + 2}{4},
\end{align*}
where $\alpha_1 := 3\alpha$, $\alpha_2 := 4(1- \alpha)$, $\zeta^{(3)}_{k} = \gamma_k^2 (\nu_k - \gamma_k^2)$, $\zeta^{(4)}_{k} = \kappa_k^2 (\omega_k - \beta_k^2)$ and $\zeta^{(34)}_{k} = \gamma_k \kappa_k (\eta_k - \gamma_k \beta_k)$. The expressions $A_k$, $B_{kl}$ and $D_k$ are encountered in Theorems \ref{theo:sep_asymp} and \ref{theo:sym_asymp}.

\subsection{Vector- and matrix-valued quantities}

We write $F_\textbf{x}$ for the cumulative distribution function (c.d.f.) of a $p$-variate random vector $\textbf{x}$. Then  $F_{\textbf{A} \textbf{x} + \textbf{b}}$ is the c.d.f. of  $\textbf{A} \textbf{x} + \textbf{b}$. If the random vector $\textbf{x}$ has the mean vector $\boldsymbol{\mu}$ and the covariance matrix $\boldsymbol{\Sigma}$, the standardized vector $\textbf{x}_{st}$ is given by $\textbf{x}_{st} := \boldsymbol{\Sigma}^{-1/2} (\textbf{x} - \boldsymbol{\mu})$, where $\boldsymbol{\Sigma}^{-1/2}$ is chosen as the unique symmetric matrix $\textbf{G}$ satisfying $\textbf{G} \boldsymbol{\Sigma} \textbf{G}^T = \textbf{I}_p$. Multivariate standardization is affine equivariant up to rotation, that is, if $\textbf{x}^* = \textbf{A} \textbf{x} + \textbf{b}$, then $(\textbf{x}^*)_{st} = \textbf{V} \textbf{x}_{st}$, for some orthogonal matrix $\textbf{V} \in \mathbb{R}^{p \times p}$, see e.g. \cite{ilmonen2012invariant}. This result is crucial in proving the affine equivariance of the signal separation functionals later on.

Fixing $\alpha \in [0, 1]$, the projection pursuit methods in the following sections are based on the objective function
\begin{align}\label{eq:index_defl}
G_\alpha(\textbf{u}) := \alpha \gamma^2(\textbf{u}^T \textbf{x}_{st}) + (1 - \alpha) \kappa^2(\textbf{u}^T \textbf{x}_{st}),
\end{align}
where $\textbf{x}$ is the observed random vector and $\textbf{u}^T\textbf{u} = 1$. For the estimating equations we further need its gradient,
\begin{align}
\textbf{T}_\alpha(\textbf{u}) = 3 \alpha \gamma( \textbf{u}^T \textbf{x}_{st}) E\left[(\textbf{u}^T \textbf{x}_{st})^2 \textbf{x}_{st} \right] + 4 (1 - \alpha) \kappa( \textbf{u}^T \textbf{x}_{st})  E\left[(\textbf{u}^T \textbf{x}_{st})^3 \textbf{x}_{st} \right].
\end{align}\label{eq:grad_defl}
\begin{remark}\label{rem:grad}
Later in this paper the gradient $\textbf{T}_\alpha(\textbf{u})$ is used to build a fixed-point algorithm for the orthonormal rows of the matrix $\textbf{U}$, see Algorithms \ref{algo:sep} and \ref{algo:sym1}.
In practice more stable algorithms are obtained by replacing the gradient with the following alternative based on a modified Newton-Raphson algorithm.
\begin{align*}
\textbf{T}{}^*_\alpha(\textbf{u}) &= 3 \alpha \gamma( \textbf{u}^T \textbf{x}_{st}) E\left[(\textbf{u}^T \textbf{x}_{st})^2 \textbf{x}_{st} \right] \\
&+ 4 (1 - \alpha) \kappa( \textbf{u}^T \textbf{x}_{st}) \left(  E\left[(\textbf{u}^T \textbf{x}_{st})^3 \textbf{x}_{st} \right] - 3 \textbf{u} \right),
\end{align*}
see e.g. \citet{HyvarinenOja:1997} and \citet{Miettinen2016} for further discussion.
\end{remark}
Some often encountered sets of square matrices include the set of all orthogonal matrices, the set of all heterogeneous sign-change matrices (diagonal matrices with diagonal elements equal to $\pm 1$), the set of all heterogeneous scaling matrices (diagonal matrices with positive diagonal elements) and the set of all permutation matrices denoted, respectively, by $\mathcal{U}, \mathcal{J}, \mathcal{D}$ and $\mathcal{P}$.
The set of all $d \times p$ matrices with orthonormal rows is denoted by $\mathcal{U}^{d \times p}$ and the equivalence relation $\textbf{A} \equiv \textbf{B}$ means that $\textbf{A} = \textbf{P} \textbf{J} \textbf{B}$, for some $\textbf{P} \in \mathcal{P}$, $\textbf{J} \in \mathcal{J}$. Also, let $\mathcal{C}$ denote the set of all matrices that can be expressed as $\textbf{J} \textbf{D} \textbf{P}$ where $\textbf{J} \in \mathcal{J}$, $\textbf{D} \in \mathcal{D}$ and $\textbf{P} \in \mathcal{P}$.

The norm $\| \cdot \|$ refers for vector arguments to the standard Euclidean norm and for matrix arguments to the Frobenius norm.

\section{SIGNAL SEPARATION MODEL}\label{sec:mod_ssf}

\subsection{The model and its assumptions}\label{subsec:ICmodel}

The model used throughout the paper is the following combination of the ICA and NGCA models in which the $p$-variate observations $\textbf{x}_1,\ldots, \textbf{x}_n$ are thought to be independent realisations of the random vector $\textbf{x} \in \mathbb{R}^p$ generated as
\begin{align}\label{eq:icm_icmodel}
\textbf{x} = \boldsymbol{\mu} + \boldsymbol{\Omega} \textbf{z}, \quad \mbox{where } \textbf{z} = \begin{pmatrix}
\textbf{s} \\
\textbf{n}
\end{pmatrix}, \,
\textbf{s} \in \mathbb{R}^{d}, \, \textbf{n} \in \mathbb{R}^{p - d}.
\end{align}
We further assume that the unobserved random vector $\textbf{z}$ satisfies the following two assumptions.

\begin{assumption}\label{assu:ind}
The components of $\textbf{z}$ are mutually independent and standardized.
\end{assumption}

\begin{assumption}\label{assu:gaus}
The components of $\textbf{s}$ are non-Gaussian and the components of $\textbf{n}$ are Gaussian.
\end{assumption}
The standardization conditions $E(z_k)=0$ and $E(z_k^2)=1$, $k=1,\ldots,p$, implied by Assumption \ref{assu:ind} serve as identification constraints for the location $\boldsymbol{\mu}$ and the scales of the columns of $\mathbf{\Omega}$, implying that  $E(\textbf{x})=\boldsymbol{\mu}\ \ \mbox{and}\ \ Cov(\textbf{x})=\boldsymbol{\Sigma} = \boldsymbol{\Omega} \boldsymbol{\Omega}^T$.
Writing $\boldsymbol{\Omega} = (\boldsymbol{\Omega}_1, \boldsymbol{\Omega}_2)$ the model can also be expressed as
\begin{align*}
 \textbf{x} = \boldsymbol{\mu} + \boldsymbol{\Omega}_1 \textbf{s} + \boldsymbol{\Omega}_2 \textbf{n},
\end{align*}
where $\boldsymbol{\Omega}_2\in \mathbb{R}^{p\times (p-d)}$ can be identified only up to a rotation from the right-hand side. The matrix  $\boldsymbol{\Omega}_1\in \mathbb{R}^{p\times d}$,
however, can be identified up to the signs and permutation of its columns, a fact which follows from the classical Skitovich-Darmois theorem restated here as Theorem \ref{theo:ibragimov}, see Theorem 1 in \citet{ibragimov2014ghurye}. The proof of Corollary \ref{cor:identify} is given in the Appendix.
\begin{theorem}\label{theo:ibragimov}
Assume that $s_1, \ldots, s_k$ are independent random variables  and that the real numbers $a_1,\ldots,a_k,b_1,\ldots,b_k$ are non-zero. If $\sum_{j=1}^k a_j s_j$ and $\sum_{j=1}^k b_j s_j$ are independent then the variables $s_1,\ldots,s_k$ are normally distributed.
\end{theorem}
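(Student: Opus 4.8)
The plan is to work entirely with characteristic functions, since the statement assumes nothing about moments. Writing $\phi_j(t) := E\!\left(e^{\mathrm{i}t s_j}\right)$ for the characteristic function of $s_j$, mutual independence of $s_1,\dots,s_k$ makes the joint characteristic function factorize, so for the two forms $L_1=\sum_j a_j s_j$ and $L_2=\sum_j b_j s_j$ one has $E\!\left(e^{\mathrm{i}(uL_1+vL_2)}\right)=\prod_j\phi_j(ua_j+vb_j)$, while $E\!\left(e^{\mathrm{i}uL_1}\right)=\prod_j\phi_j(ua_j)$ and $E\!\left(e^{\mathrm{i}vL_2}\right)=\prod_j\phi_j(vb_j)$. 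The hypothesis that $L_1$ and $L_2$ are independent is exactly $E\!\left(e^{\mathrm{i}(uL_1+vL_2)}\right)=E\!\left(e^{\mathrm{i}uL_1}\right)E\!\left(e^{\mathrm{i}vL_2}\right)$ for all real $u,v$, which therefore becomes the multiplicative functional equation
\[
\prod_{j}\phi_j(ua_j+vb_j)=\prod_{j}\phi_j(ua_j)\,\prod_{j}\phi_j(vb_j).
\]

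Next I would linearize. Each $\phi_j$ is continuous with $\phi_j(0)=1$, so there is a neighbourhood of the origin on which no $\phi_j$ vanishes and the principal logarithm $\psi_j:=\log\phi_j$ is well defined, continuous, and satisfies $\psi_j(0)=0$. Taking logarithms converts the product equation into the additive equation $\sum_j\psi_j(ua_j+vb_j)=A(u)+B(v)$, where $A(u):=\sum_j\psi_j(ua_j)$ depends only on $u$ and $B(v):=\sum_j\psi_j(vb_j)$ only on $v$. The heart of the argument is to deduce from this that each $\psi_j$ is a quadratic polynomial near the origin, and I would do it with finite differences rather than derivatives, so as not to smuggle in moment assumptions. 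The mixed difference operator in $u$ and $v$ annihilates $A(u)+B(v)$, while a single term $\psi_j(ua_j+vb_j)$ is left invariant by the difference taken in the direction $(b_j,-a_j)$ orthogonal to $(a_j,b_j)$. Composing, for a fixed index, the directional differences that kill every other term, one finds that a finite difference of $\psi_j$ of sufficiently high order vanishes identically on a neighbourhood of $0$; since $a_jb_j\neq0$ the relevant steps are nonzero, and a continuous function whose higher-order difference vanishes is a polynomial there.

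Finally I would upgrade this local polynomial information to global Gaussianity. Because $\psi_j(0)=0$, the bound $|\phi_j|\le1$ gives $\operatorname{Re}\psi_j\le0$, and the Hermitian symmetry $\phi_j(-t)=\overline{\phi_j(t)}$ forces the reality and sign constraints $\psi_j(t)=\mathrm{i}\mu_j t-\tfrac12\sigma_j^2 t^2$ once the degree is known to be at most two; the degree bound itself, together with the passage from a local to a global representation, is precisely what Marcinkiewicz's theorem on characteristic functions of the form $\exp(\text{polynomial})$ supplies, yielding that $\phi_j$ is a (possibly degenerate) Gaussian characteristic function and hence that every $s_j$ is normal. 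The main obstacle is the middle step: extracting the quadratic form from the functional equation purely by finite differences, carefully tracking which directional differences annihilate which terms so that exactly one index survives, and handling the case of proportional coefficient pairs $(a_i,b_i)\parallel(a_j,b_j)$, which the differencing cannot separate and which must therefore be grouped before the scheme isolates a single surviving term.
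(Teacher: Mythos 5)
First, a point of reference: the paper does not prove this statement at all. It is the classical Skitovich--Darmois theorem, imported by citation to Theorem 1 of \citet{ibragimov2014ghurye}, and the only related argument in the Appendix is the proof of Corollary \ref{cor:identify}, which \emph{uses} the theorem. So your sketch must be judged on its own as a reconstruction of the classical proof, and in outline it is the standard one (as in Kagan--Linnik--Rao): factorize the joint characteristic function, take principal logarithms near the origin, reduce to the Pexider-type equation $\sum_j \psi_j(ua_j+vb_j)=A(u)+B(v)$, kill terms by finite differences in the directions $(b_j,-a_j)$ (correct: these fix the argument $ua_j+vb_j$), conclude local polynomiality, and finish with Marcinkiewicz plus Hermitian symmetry and $\operatorname{Re}\psi_j\le 0$.

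There are, however, two genuine gaps. (a) The proportional-pairs case is not resolved by grouping alone. If $b_i/a_i=b_j/a_j$, grouping those terms into a single function of $w=u+(b_i/a_i)v$ lets the differencing scheme go through, but what it then delivers is that the \emph{grouped} log-characteristic function --- that of the sum $a_is_i+a_js_j$ of independent variables --- is quadratic, i.e.\ that this sum is Gaussian. The theorem asserts that each $s_j$ individually is normal, and to ungroup you must invoke Cram\'er's decomposition theorem (independent summands of a normal variable are normal), a deep result resting on Hadamard factorization which your plan never mentions; without it the grouped case proves strictly less than claimed. (b) Marcinkiewicz's theorem does not "supply" the local-to-global passage: it applies to characteristic functions equal to $\exp(P)$ on all of $\mathbb{R}$, whereas your differencing yields $\psi_j=P_j$ only on a neighbourhood of zero. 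Bridging this requires a separate argument --- either showing the $\phi_j$ are zero-free on all of $\mathbb{R}$ so that the logarithm and the functional equation hold globally, or appealing to the theory of analytic characteristic functions ($\phi_j$ is $C^\infty$ at $0$ with the derivative growth of $e^{P_j}$, hence analytic in a strip, hence equal to $e^{P_j}$ on the real line by continuation). With Cram\'er's theorem added for (a) and an explicit extension step for (b), your sketch becomes the standard complete proof.
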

\begin{corollary}\label{cor:identify}
The submatrix $\boldsymbol{\Omega}_1$ can be identified up to the signs and permutation of its columns.
\end{corollary}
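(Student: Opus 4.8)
The plan is to establish identifiability by comparing two arbitrary representations of the same distribution. Suppose the law of $\textbf{x}$ is generated both by $\textbf{x} = \boldsymbol{\mu} + \boldsymbol{\Omega} \textbf{z}$ and by $\textbf{x} = \tilde{\boldsymbol{\mu}} + \tilde{\boldsymbol{\Omega}} \tilde{\textbf{z}}$, where each pair $(\boldsymbol{\Omega}, \textbf{z})$ and $(\tilde{\boldsymbol{\Omega}}, \tilde{\textbf{z}})$ obeys Assumptions \ref{assu:ind} and \ref{assu:gaus}; I must show that $\tilde{\boldsymbol{\Omega}}_1$ equals $\boldsymbol{\Omega}_1$ up to the signs and a permutation of its columns. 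Matching first and second moments gives $\boldsymbol{\mu} = \tilde{\boldsymbol{\mu}} = E(\textbf{x})$ and $\boldsymbol{\Sigma} := \boldsymbol{\Omega} \boldsymbol{\Omega}^T = \tilde{\boldsymbol{\Omega}} \tilde{\boldsymbol{\Omega}}^T = Cov(\textbf{x})$, so that both $\boldsymbol{\Sigma}^{-1/2} \boldsymbol{\Omega}$ and $\boldsymbol{\Sigma}^{-1/2} \tilde{\boldsymbol{\Omega}}$ are orthogonal. Whitening $\textbf{x}$ then shows that $\tilde{\textbf{z}}$ and $\textbf{W} \textbf{z}$ have the same distribution for the orthogonal matrix $\textbf{W} := \tilde{\boldsymbol{\Omega}}^T \boldsymbol{\Sigma}^{-1} \boldsymbol{\Omega}$, which moreover satisfies $\tilde{\boldsymbol{\Omega}} = \boldsymbol{\Omega} \textbf{W}^T$. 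Partitioning $\textbf{W}$ into blocks conformally with $(\textbf{s}^T, \textbf{n}^T)^T$, the relation $\tilde{\boldsymbol{\Omega}} = \boldsymbol{\Omega} \textbf{W}^T$ yields $\tilde{\boldsymbol{\Omega}}_1 = \boldsymbol{\Omega}_1 \textbf{W}_{11}^T + \boldsymbol{\Omega}_2 \textbf{W}_{12}^T$, so it suffices to prove that the first $d$ rows of $\textbf{W}$ have the form $(\textbf{W}_{11}, \textbf{0})$ with $\textbf{W}_{11}$ a signed permutation matrix, since then the second term drops and $\textbf{W}_{11}^T$ is itself a signed permutation.

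The heart of the argument is this block structure of $\textbf{W}$, which I would derive from Theorem \ref{theo:ibragimov}. Since the components of $\tilde{\textbf{z}} \stackrel{d}{=} \textbf{W} \textbf{z}$ are mutually independent, for any two distinct indices $i \ne i'$ the combinations $\tilde{z}_i = \sum_j W_{ij} z_j$ and $\tilde{z}_{i'} = \sum_j W_{i'j} z_j$ are independent. Fixing a non-Gaussian source $z_j$ with $j \le d$, I would argue by contradiction: if $W_{ij} \ne 0$ and $W_{i'j} \ne 0$, collect the shared support $S = \{ l : W_{il} \ne 0,\ W_{i'l} \ne 0 \}$ and peel off, via characteristic functions near the origin, the remaining terms of $\tilde{z}_i$ and $\tilde{z}_{i'}$, which are independent of the shared part and of each other; this reduces the independence of $\tilde{z}_i, \tilde{z}_{i'}$ to that of $\sum_{l \in S} W_{il} z_l$ and $\sum_{l \in S} W_{i'l} z_l$, whose coefficients are all nonzero. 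Theorem \ref{theo:ibragimov} then forces every $z_l$, $l \in S$, to be Gaussian, contradicting $j \in S$. Hence each of the first $d$ columns of $\textbf{W}$ has at most one nonzero entry. I expect this passage from the all-nonzero-coefficients form of Theorem \ref{theo:ibragimov} to the general case to be the main obstacle, the delicate point being the cancellation of the non-shared characteristic-function factors, which are guaranteed nonzero only on a neighbourhood of the origin.

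The remaining steps are then forced by orthogonality. Each of the first $d$ columns of $\textbf{W}$ has unit norm, so the previous bound upgrades to exactly one nonzero entry, equal to $\pm 1$, in some row $\sigma(j)$; orthonormality of the rows in turn makes row $\sigma(j)$ equal to $\pm \textbf{e}_j^T$, whence $\tilde{z}_{\sigma(j)} = \pm z_j$. The rows $\sigma(j)$ are then distinct, so $\sigma$ is injective, and since $\tilde{z}_{\sigma(j)} = \pm z_j$ is non-Gaussian, Assumption \ref{assu:gaus} places each $\sigma(j)$ among the first $d$ indices. Thus $\sigma$ is a permutation of $\{1, \dots, d\}$, the first $d$ rows of $\textbf{W}$ are exactly $\pm \textbf{e}_1^T, \dots, \pm \textbf{e}_d^T$ in some order, i.e. $(\textbf{W}_{11}, \textbf{0})$ with $\textbf{W}_{11}$ a signed permutation matrix, and $\tilde{\boldsymbol{\Omega}}_1 = \boldsymbol{\Omega}_1 \textbf{W}_{11}^T$ completes the proof.
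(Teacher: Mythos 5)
Your proof is correct and rests on the same engine as the paper's, but it takes a more global and in fact more complete route. The paper reduces the claim at the outset to the statement that if $\textbf{s}^* = \textbf{A}\textbf{s}$ has standardized independent components for an invertible $d \times d$ matrix $\textbf{A}$, then $\textbf{A} = \textbf{D}\textbf{P}$ with $\textbf{D} \in \mathcal{D}$, $\textbf{P} \in \mathcal{P}$; it then applies Theorem \ref{theo:ibragimov} pairwise to show each column of $\textbf{A}$ has at most one non-zero entry, and finishes with invertibility and the unit-variance normalization. This reduction tacitly assumes that any alternative signal vector is a function of $\textbf{s}$ alone, i.e.\ it never loads on the Gaussian noise $\textbf{n}$ --- a point the paper does not argue. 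You instead compare two full representations of \eqref{eq:icm_icmodel}, whiten to obtain an orthogonal $p \times p$ matrix $\textbf{W}$ with $\tilde{\textbf{z}} \stackrel{d}{=} \textbf{W}\textbf{z}$, and derive the block form $(\textbf{W}_{11}, \textbf{0})$ of the first $d$ rows from the same pairwise Skitovich--Darmois argument combined with orthonormality and Assumption \ref{assu:gaus} (to place the images $\sigma(j)$ among the signal indices). Your version thus proves the implicit step the paper skips, at the modest cost of carrying the full $p$-dimensional orthogonal matrix instead of a $d \times d$ mixing matrix; the paper's $\pm 1$ normalization via standardization of $\textbf{s}^*$ and your normalization via unit column norms of $\textbf{W}$ are the same mechanism in different clothing. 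One shared delicate point deserves mention: both proofs must pass from Theorem \ref{theo:ibragimov}, stated for all-non-zero coefficients, to combinations with zero coefficients by discarding the non-shared sources. The paper dismisses this with ``clearly,'' while you correctly flag it and sketch a characteristic-function peeling argument; note, however, that cancelling the non-shared factors only yields factorization of the joint characteristic function on a neighbourhood of the origin, which by itself does not imply independence, so your reduction as sketched does not literally deliver the hypothesis of the stated theorem. The clean repair, available to both proofs, is to invoke the strong form of the Skitovich--Darmois theorem (every $s_k$ entering both combinations with non-zero coefficients is normal), whose standard proof works locally near the origin and therefore needs exactly the local factorization you obtain.
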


Assumption \ref{assu:gaus} quantifies our prior knowledge on the dimensions of the non-Gaussian signal subspace and the Gaussian noise subspace. Inference, testing and estimation of the dimension of the signal subspace is also briefly discussed in Section \ref{sec:sub}. In addition to Assumptions \ref{assu:ind} and \ref{assu:gaus}, for the limiting distributions of our unmixing matrix estimates to exist the existence of some higher moments of $\textbf{z}$ is further required, see Theorems \ref{theo:sep_asymp} and \ref{theo:sym_asymp}.

In the case of $d=0$ we simply get a multivariate normal model without any signal $\textbf{s}$ and if $d=p-1$ or $d=p$ one gets the basic independent component model. Another related model is the so-called {\it noisy independent component model}, $\textbf{x} = \mathbf{\Omega} \textbf {s} + \textbf {n}$, with $d$ non-Gaussian independent components in $\textbf{s}$ and Gaussian noise in the $p$-variate $\textbf{n}$ with $\mathbf{\Omega} \in \mathbb{R}^{p\times d}$. Stacking the signal and noise parts to a single vector, the effective mixing matrix $( \boldsymbol{\Omega} | \textbf{I})$ is then non-square and standard ICA methods no longer apply. For such models, see e.g. \citet{BonhommeRobin2009} and \citet{comon2010handbook}.

\subsection{Signal separation functionals}\label{subsec:ssf}

Our approach for estimating the signal components and the signal subspace is projection pursuit with a preliminary step advised by the following theorem. The theorem is one of the key results of independent component analysis, see for example \cite{miettinen2014fourth} for its proof.

\begin{theorem}\label{theo:icm_rotate}
Let $\textbf{x} \in \mathbb{R}^p$ follow the signal separation model in \eqref{eq:icm_icmodel}. Then the standardized vector $\textbf{x}_{st} = \boldsymbol{\Sigma}^{-1/2} (\textbf{x} - \boldsymbol{\mu})$
satisfies $\textbf{s} = \textbf{U} \textbf{x}_{st}$ for some $\textbf{U} \in \mathcal{U}^{d \times p}$.
\end{theorem}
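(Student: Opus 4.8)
The plan is to show that multivariate standardization turns the mixing matrix $\boldsymbol{\Omega}$ into an orthogonal matrix, after which the claim reduces to simply selecting the first $d$ rows. First I would substitute the model \eqref{eq:icm_icmodel} into the definition of the standardized vector. Since $E(\textbf{z}) = \mathbf{0}$ and $Cov(\textbf{z}) = \textbf{I}_p$ under Assumption \ref{assu:ind}, we have $\boldsymbol{\Sigma} = \boldsymbol{\Omega} \boldsymbol{\Omega}^T$, and therefore $\textbf{x}_{st} = \boldsymbol{\Sigma}^{-1/2}(\textbf{x} - \boldsymbol{\mu}) = \boldsymbol{\Sigma}^{-1/2} \boldsymbol{\Omega} \textbf{z}$.

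Next I would set $\textbf{W} := \boldsymbol{\Sigma}^{-1/2} \boldsymbol{\Omega}$ and verify that $\textbf{W}$ is orthogonal. Using the symmetry of $\boldsymbol{\Sigma}^{-1/2}$, one computes $\textbf{W} \textbf{W}^T = \boldsymbol{\Sigma}^{-1/2} \boldsymbol{\Omega} \boldsymbol{\Omega}^T \boldsymbol{\Sigma}^{-1/2} = \boldsymbol{\Sigma}^{-1/2} \boldsymbol{\Sigma} \boldsymbol{\Sigma}^{-1/2} = \textbf{I}_p$, and since $\textbf{W}$ is square this is exactly orthogonality. This is precisely the standardization-up-to-rotation property recalled in Section \ref{sec:nota}. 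Consequently the relation $\textbf{x}_{st} = \textbf{W} \textbf{z}$ can be inverted as $\textbf{z} = \textbf{W}^T \textbf{x}_{st}$.

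To finish, I would let $\textbf{U}$ denote the matrix formed by the first $d$ rows of $\textbf{W}^T$, equivalently $\textbf{U} = (\textbf{I}_d, \mathbf{0}) \textbf{W}^T$. Because the signal vector $\textbf{s}$ comprises the first $d$ coordinates of $\textbf{z}$, the identity $\textbf{z} = \textbf{W}^T \textbf{x}_{st}$ immediately yields $\textbf{s} = \textbf{U} \textbf{x}_{st}$. Moreover, the rows of $\textbf{W}^T$ are orthonormal because $\textbf{W}^T$ is orthogonal, so any $d$ of them, in particular the first $d$, remain orthonormal, giving $\textbf{U} \in \mathcal{U}^{d \times p}$.

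The argument is essentially linear-algebraic and I do not expect a genuine obstacle; the only real content is the orthogonality of $\textbf{W}$, which rests on the second-moment constraints of Assumption \ref{assu:ind} rather than on the distributional content of Assumption \ref{assu:gaus}. It is worth noting that the Gaussianity assumption plays no role in this particular reduction — it becomes essential only later, when one argues that the specific orthonormal $\textbf{U}$ recovering $\textbf{s}$ can actually be located by maximizing the projection index $G_\alpha$.
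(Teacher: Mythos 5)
Your proof is correct and follows essentially the same route as the paper, which does not prove the theorem itself but refers to \cite{miettinen2014fourth} and later (in the proof of Theorem \ref{theo:sep_ineq}) invokes exactly your stronger intermediate fact: the whitened vector satisfies $\textbf{z} = \textbf{W}^T \textbf{x}_{st}$ with $\textbf{W} = \boldsymbol{\Sigma}^{-1/2}\boldsymbol{\Omega}$ orthogonal, and $\textbf{U}$ is just its first $d$ rows. Your closing remark is also accurate: only the second-moment constraints of Assumption \ref{assu:ind} are used here, while Assumption \ref{assu:gaus} enters only when one must actually locate $\textbf{U}$ via the projection index.
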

Theorem \ref{theo:icm_rotate} essentially states that the estimation of the relevant rows of $\boldsymbol{\Omega}^{-1}$ can, using standardization, in fact be reduced to a simpler problem, namely to the estimation of a matrix $\textbf{U}$ with orthonormal rows, and this is the task we use projection pursuit in the next sections for. Having estimated $\textbf{U}$ a transformation into the signal space is then given by $\textbf{x} \mapsto \textbf{W} \textbf{x}$, where $\textbf{W} := \textbf{U} \boldsymbol{\Sigma}^{-1/2}$. 

Next, we give the definition of a \textit{signal separation functional} which formalizes the role of the transformation matrix $\textbf{W}$ above.

\begin{definition}\label{def:ssf}
The matrix-valued functional $\textbf{W} \in \mathbb{R}^{d \times p}$ is said to be a  signal separation functional if (i) under the model \eqref{eq:icm_icmodel} it holds that $\textbf{W}(F_\textbf{x}) \textbf{x} \equiv \textbf{s}$ and (ii) $\textbf{W}(F_\textbf{x})$ is affine equivariant in the sense that for all $\textbf{x} $ and all full-rank $\textbf{A} \in \mathbb{R}^{p \times p}$,
\[\textbf{W}(F_{\textbf{Ax}}) \textbf{A} \textbf{x} \equiv \textbf{W}(F_\textbf{x}) \textbf{x}.\]
\end{definition}




The condition $(i)$ in the definition above states that signal separation functionals are up to sign and permutation Fisher consistent to the $d$ rows of $\boldsymbol{\Omega}^{-1}$ corresponding to the signals. Note that this invariance with respect to signs and order is unavoidable since neither is fixed in the model \eqref{eq:icm_icmodel}. The condition $(ii)$ implies in particular that if we consider $F_n$, the empirical distribution function of a random sample $\textbf{x}_1,\ldots,\textbf{x}_n$ from $F$, the estimate $\textbf{W}(F_n)$ is also affine equivariant. This gives us the practical advantage of needing to consider only the case of identity mixing, $\boldsymbol{\Omega} = \textbf{I}_p$, when discussing the estimates' asymptotic behaviors, the case of general $\boldsymbol{\Omega}$ easily following.

The requirement of affine equivariance further means that in the case of $d=p$ any signal separation functional $\textbf{W}(F)$ is also an {\it ICS functional}, that is, $\textbf{W}(F)$ provides a transformation to an {\it invariant coordinate system (ICS)}, see \cite{TylerCritchleyDumbgenOja:2009} and \cite{ilmonen2012invariant}.

\section{ESTIMATING THE SIGNALS SEPARATELY} \label{sec:sep}

Our first approach uses projection pursuit to estimate a single signal at a time, continuing until all $d$ signals have been extracted. The objective function in Definition \ref{def:sep_func} has been discussed already in \cite{jones1987projection} for $\alpha = 0.8$, but only the cases $\alpha = 0$ and $\alpha = 1$ have been previously given a comprehensive treatise in literature, including asymptotics. First, to actually guarantee the validity of our approach, we present the following inequality, an extension of the first part of Theorem 2 in \cite{miettinen2014fourth}.

\begin{theorem}\label{theo:sep_ineq}
Assume the model in \eqref{eq:icm_icmodel} and let $\textbf{U} = (\textbf{u}_1,\ldots,\textbf{u}_{d})^T$ be the matrix of Theorem \ref{theo:icm_rotate}. Without loss of generality, assume further that, for a chosen $\alpha\in [0,1]$, the signals in $\textbf{s}$ are ordered decreasingly according to the values $\alpha \gamma^2 + (1 - \alpha) \kappa^2$. Then for any fixed $k \geq 1$,
\begin{align*}
G_\alpha(\textbf{u}) \leq G_\alpha(\textbf{u}_k),
\end{align*}
for all $\textbf{u} \in \mathbb{R}^p$ satisfying $\textbf{u}^T \textbf{u} = 1$ and $\textbf{u}^T \textbf{u}_l = 0$ for all $l = 1,\ldots,k-1$.
\end{theorem}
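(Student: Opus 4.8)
The plan is to reduce everything to the canonical case of identity mixing and then finish with a single convexity (Jensen) argument. First I would invoke Theorem \ref{theo:icm_rotate} together with the fact that standardization is affine equivariant up to rotation: writing $\textbf{x}_{st} = \textbf{V} \textbf{z}$ for an orthogonal $\textbf{V}$ and substituting $\textbf{w} = \textbf{V}^T \textbf{u}$, the value $G_\alpha(\textbf{u})$ depends only on $\textbf{w}^T \textbf{z}$, the row $\textbf{u}_k$ maps to the standard basis vector $\textbf{e}_k$, and the orthogonality constraints $\textbf{u}^T \textbf{u}_l = 0$ become $w_l = 0$ for $l = 1,\ldots,k-1$. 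Hence it suffices to prove the inequality in the case $\textbf{x}_{st} = \textbf{z}$ with $\textbf{u}_k = \textbf{e}_k$, so that the constraint simply forces the first $k-1$ coordinates of the competing direction to vanish.

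Next I would compute the two cumulants of a unit-norm linear combination $\textbf{u}^T \textbf{z} = \sum_j u_j z_j$. Since the $z_j$ are independent and standardized, $\textbf{u}^T \textbf{z}$ has unit variance, and once the mixed terms containing an isolated odd factor vanish one obtains the clean expressions $\gamma(\textbf{u}^T \textbf{z}) = \sum_j u_j^3 \gamma_j$ and $\kappa(\textbf{u}^T \textbf{z}) = \sum_j u_j^4 \kappa_j$; for the Gaussian coordinates $j > d$ both $\gamma_j$ and $\kappa_j$ vanish, so only $j = 1,\ldots,d$ contribute. This is routine but forms the computational backbone of the argument.

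Then comes the analytic core. Setting $a_j := u_j^2 \ge 0$ with $\sum_j a_j = 1$ (so each $a_j \le 1$), I would use $a_j^{3/2} \le a_j$ and $a_j^2 \le a_j$ to obtain $|\gamma(\textbf{u}^T \textbf{z})| \le \sum_j a_j |\gamma_j|$ and $|\kappa(\textbf{u}^T \textbf{z})| \le \sum_j a_j |\kappa_j|$. Since $f(p,q) := \alpha p^2 + (1-\alpha) q^2$ depends only on $|p|,|q|$, is increasing in each, and is convex, these bounds give $G_\alpha(\textbf{u}) = f(\gamma(\textbf{u}^T\textbf{z}), \kappa(\textbf{u}^T\textbf{z})) \le f(\sum_j a_j |\gamma_j|, \sum_j a_j |\kappa_j|)$, and one application of Jensen's inequality yields $G_\alpha(\textbf{u}) \le \sum_j a_j f(|\gamma_j|, |\kappa_j|) = \sum_j a_j c_j$, where $c_j := \alpha \gamma_j^2 + (1-\alpha) \kappa_j^2$. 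Finally, using $u_l = 0$ for $l < k$, $c_j = 0$ for $j > d$, the decreasing ordering $c_k \ge c_{k+1} \ge \cdots \ge c_d$, and $\sum_{j \ge k} a_j = 1$, I would conclude $\sum_j a_j c_j = \sum_{j=k}^d a_j c_j \le c_k = G_\alpha(\textbf{e}_k)$.

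The step I expect to be the main obstacle is handling skewness and kurtosis \emph{jointly} rather than separately: the signals are ordered by the combined criterion $c_j$, not by $\gamma_j^2$ or $\kappa_j^2$ individually, so the two contributions cannot be bounded in isolation. The device that makes this work is recognizing that once $|\gamma|$ and $|\kappa|$ are each reduced to expressions linear in the weights $a_j$, a single application of Jensen's inequality to the convex quadratic $f$ collapses the whole convex combination to $\sum_j a_j c_j$, after which the ordering assumption does the rest. Some care is also needed to justify the monotonicity and sign handling when passing from the cumulants to their absolute values; by contrast, an equality analysis (which would matter for later uniqueness claims) is not required here, since the statement asserts only the inequality.
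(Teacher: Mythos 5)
Your proposal is correct and follows essentially the same route as the paper's proof: reduction to the identity-mixed case via the orthogonal rotation linking $\textbf{z}$ and $\textbf{x}_{st}$, the cumulant identities $\gamma(\textbf{u}^T\textbf{z})=\sum_l u_l^3\gamma_l$ and $\kappa(\textbf{u}^T\textbf{z})=\sum_l u_l^4\kappa_l$, a bound by the convex combination $\sum_{l\geq k} u_l^2\left(\alpha\gamma_l^2+(1-\alpha)\kappa_l^2\right)$, and finally the ordering assumption. The only cosmetic difference is the middle step: the paper applies the Cauchy--Schwarz inequality separately to each squared cumulant together with $u_l^4\le u_l^2$ and $u_l^6\le u_l^2$, whereas you use the triangle inequality, the power dominations $a_j^{3/2}\le a_j$ and $a_j^2\le a_j$, and one Jensen step on the convex quadratic --- the two computations coincide at the same intermediate bound, so your "joint handling via Jensen" is equivalent to the paper's component-wise treatment rather than a genuinely necessary device.
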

Theorem \ref{theo:sep_ineq} implies that the $k$th row of $\textbf{U}$ is a global maximizer of $G_\alpha$ in the orthogonal complement of $\mbox{span}(\textbf{u}_1,\ldots,\textbf{u}_{k-1})$. Thus the $d$ signals can be recovered by repeatedly searching for mutually orthogonal vectors $\textbf{u}$ maximizing the value of the projection index $G_\alpha$. 
This technique is concretised in the following definition.

\begin{definition}\label{def:sep_func}
For a chosen $\alpha \in [0, 1]$, the deflation-based projection pursuit functional based on convex combination of squared third and fourth cumulants is a functional $\textbf{W} (F_\textbf{x}) = \textbf{U} \boldsymbol{\Sigma}^{-1/2}$, where $\boldsymbol{\Sigma} = Cov(\textbf{x})$ and the orthonormal rows of the matrix $\textbf{U} = (\textbf{u}_1,\ldots,\textbf{u}_d)^T$ are found one-by-one such that
\[ \textbf{u}_k = \underset{\textbf{u}_k^T \textbf{u}_l = \delta_{kl}, 1 \leq l \leq k}{\emph{argmax}} G_\alpha(\textbf{u}_k). \]
\end{definition}

As all $L_p$-norms are equal in $\mathbb{R}$, the proposed method (which uses $L_2$) is equivalent to the first $d$ steps of deflation-based FastICA (which uses $L_1$) \citep{hyvarinen1999fast} whenever using only one of the cumulants. That is, choosing either $\alpha = 0$ or $\alpha = 1$ in Definition \ref{def:sep_func} corresponds to deflation-based FastICA with the respective projection indices $|\kappa(\textbf{u}_k^T \textbf{x}_{st})|$ and $|\gamma(\textbf{u}_k^T \textbf{x}_{st})|$.

Recalling that the transformation $\textbf{x}\to \textbf{A} \textbf{x} + \textbf{b}$ induces the transformation $\textbf{x}_{st}\to  \textbf{V}  \textbf{x}_{st}$ for some orthogonal $\textbf{V}$, the affine equivariance of the procedure given in Definition \ref{def:sep_func} follows simply from the fact that the optimization problem along with its constraints is invariant under mappings $\textbf{x}_{st} \mapsto \textbf{V} \textbf{x}_{st}$, where $ \textbf{V} \in \mathcal{U}$. Thus we have the following result. 


\begin{lemma}\label{lem:sep_ae}
The deflation-based projection pursuit functional $\textbf{W} (F_\textbf{x})$ in Definition \ref{def:sep_func} is a signal separation functional for every $\alpha \in [0, 1]$.
\end{lemma}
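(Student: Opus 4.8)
The plan is to verify the two defining conditions of a signal separation functional in Definition \ref{def:ssf} for the functional $\textbf{W}(F_\textbf{x}) = \textbf{U}\boldsymbol{\Sigma}^{-1/2}$ given in Definition \ref{def:sep_func}. For condition (i), Fisher consistency, I would argue as follows. By Theorem \ref{theo:icm_rotate} there exists $\textbf{U}_0 \in \mathcal{U}^{d\times p}$ whose rows $\textbf{u}_1^0,\ldots,\textbf{u}_d^0$ satisfy $\textbf{s} = \textbf{U}_0 \textbf{x}_{st}$. Theorem \ref{theo:sep_ineq} then guarantees that, after ordering the signals decreasingly by $\alpha\gamma^2 + (1-\alpha)\kappa^2$, the vector $\textbf{u}_k^0$ is a global maximizer of $G_\alpha$ in the orthogonal complement of $\mathrm{span}(\textbf{u}_1^0,\ldots,\textbf{u}_{k-1}^0)$. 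Since the deflation-based procedure in Definition \ref{def:sep_func} extracts exactly such sequential maximizers, the rows it produces must coincide with $\textbf{u}_1^0,\ldots,\textbf{u}_d^0$ up to the sign/permutation ambiguity captured by $\equiv$; hence $\textbf{W}(F_\textbf{x})\textbf{x} = \textbf{U}\textbf{x}_{st} \equiv \textbf{s}$.

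For condition (ii), affine equivariance, the key is the behaviour of multivariate standardization under affine maps recalled in Section \ref{sec:nota}: if $\textbf{x}^* = \textbf{A}\textbf{x} + \textbf{b}$ with $\textbf{A}$ full rank, then $\textbf{x}^*_{st} = \textbf{V}\textbf{x}_{st}$ for some orthogonal $\textbf{V}\in\mathcal{U}$. The plan is to show that the optimization problem defining the rows of $\textbf{U}$ is invariant under this rotation. Concretely, for any unit vector $\textbf{u}$ one has $G_\alpha^*(\textbf{u}) := \alpha\gamma^2(\textbf{u}^T\textbf{x}^*_{st}) + (1-\alpha)\kappa^2(\textbf{u}^T\textbf{x}^*_{st}) = G_\alpha(\textbf{V}^T\textbf{u})$, because $\textbf{u}^T\textbf{x}^*_{st} = (\textbf{V}^T\textbf{u})^T\textbf{x}_{st}$ and the skewness and kurtosis functionals depend only on the projected univariate distribution. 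The substitution $\textbf{u}\mapsto\textbf{V}^T\textbf{u}$ is a bijection of the unit sphere that also preserves the orthogonality constraints $\textbf{u}_k^T\textbf{u}_l=\delta_{kl}$, so if $\textbf{U} = (\textbf{u}_1,\ldots,\textbf{u}_d)^T$ solves the problem for $\textbf{x}$, then $\textbf{U}\textbf{V}^T$ solves it for $\textbf{x}^*$, up to the usual sign and permutation indeterminacy.

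It then remains to translate this equality of optimizers into the matrix identity required by Definition \ref{def:ssf}. Writing $\textbf{W}(F_{\textbf{Ax}}) = \textbf{U}^* (\boldsymbol{\Sigma}^*)^{-1/2}$ with $\boldsymbol{\Sigma}^* = Cov(\textbf{x}^*)$ and using $\textbf{U}^* \equiv \textbf{U}\textbf{V}^T$ together with the relation $(\boldsymbol{\Sigma}^*)^{-1/2}\textbf{A} = \textbf{V}\boldsymbol{\Sigma}^{-1/2}$ implied by $\textbf{x}^*_{st} = \textbf{V}\textbf{x}_{st}$, I would compute
\[
\textbf{W}(F_{\textbf{Ax}})\,\textbf{A}\textbf{x} \equiv \textbf{U}\textbf{V}^T (\boldsymbol{\Sigma}^*)^{-1/2}\textbf{A}\textbf{x} = \textbf{U}\textbf{V}^T\textbf{V}\boldsymbol{\Sigma}^{-1/2}\textbf{x} = \textbf{U}\boldsymbol{\Sigma}^{-1/2}\textbf{x} = \textbf{W}(F_\textbf{x})\textbf{x},
\]
which is exactly condition (ii). The main obstacle I anticipate is handling the sign/permutation indeterminacy cleanly throughout: the deflation optimizers are only defined up to sign (and, for tied criterion values, up to permutation), so I must be careful that the $\equiv$ relation is used consistently and that the invariance of the constraint set under $\textbf{V}$ genuinely carries the solution set of one problem onto the other. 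Apart from this bookkeeping, both conditions follow fairly directly from Theorems \ref{theo:icm_rotate} and \ref{theo:sep_ineq} and the rotation-equivariance of standardization.
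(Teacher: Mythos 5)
Your proposal is correct and follows essentially the same route as the paper, which proves the lemma simply by noting (in the paragraph preceding it) that $\textbf{x} \mapsto \textbf{A}\textbf{x}+\textbf{b}$ induces $\textbf{x}_{st} \mapsto \textbf{V}\textbf{x}_{st}$ for some orthogonal $\textbf{V}$ and that the optimization problem and its constraints in Definition \ref{def:sep_func} are invariant under such rotations, with Fisher consistency supplied by Theorems \ref{theo:icm_rotate} and \ref{theo:sep_ineq}. Your explicit computation via $(\boldsymbol{\Sigma}^*)^{-1/2}\textbf{A} = \textbf{V}\boldsymbol{\Sigma}^{-1/2}$ merely spells out details the paper leaves implicit, and your handling of the sign/permutation indeterminacy through $\equiv$ is consistent with the paper's treatment.
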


In practice the solution for the $k$th step can be found with the following fixed-point algorithm, the derivation of which is given in the Appendix.

\begin{algorithm}[H]
\SetAlgoLined
Initialize $\textbf{u}_{k}$\;
$\Delta \longleftarrow \infty$\;
\While{$ \Delta > \epsilon$}{
$\textbf{u}_{k, new} \longleftarrow \left(\textbf{I}_p - \sum_{l=1}^{k-1} \textbf{u}_l \textbf{u}_l^T \right) \textbf{T}_\alpha(\textbf{u}_{k})$\;
$\textbf{u}_{k, new} \longleftarrow \|\textbf{u}_{k, new}\|^{-1} \textbf{u}_{k, new}$\;
$\Delta \longleftarrow  \min\{ \| \textbf{u}_{k, new}  -  \textbf{u}_{k} \| ,  \| \textbf{u}_{k, new}  + \textbf{u}_{k}\|  \}$\;
$\textbf{u}_{k} \longleftarrow \textbf{u}_{k, new}$\;
}
\caption{Deflation-based signal separation \label{algo:sep}}

\end{algorithm}

\begin{remark}\label{rem:init1}
To ensure that the signals are found in the right order and to guarantee hence affine equivariance the initial value must be chosen in practise carefully. Following \citet{nordhausen2011deflation}, one possible initial value for the vector $\textbf{u}_k$ is obtained by first searching the FOBI \citep{cardoso1989source} or kJADE \citep{MiettinenNordhausenOjaTaskinen:2013} solution for $\textbf{x}$  and then taking that row of the estimated rotation which gives the $k$th largest value for the objective function $G_\alpha$.
\end{remark}

Besides the algorithm, the estimating equations provide a way to derive the asymptotic behavior of the estimated signal separation functional $\hat{\textbf{W}}$ in the case $\boldsymbol{\Omega} = \textbf{I}_p$. As discussed in Section \ref{subsec:ssf}, considering this special case only is sufficient as the estimate $\hat{\textbf{W}}$ is affine equivariant. 

\begin{theorem} \label{theo:sep_asymp}

Let $\textbf{x}_1,\ldots,\textbf{x}_n$ be a random sample from the independent component model in \eqref{eq:icm_icmodel} with $\boldsymbol{\Omega} = \textbf{I}_p$.
Assume that the eighth moments exist and that $\min_{1\le j \le d} \{\alpha\gamma_j^2+(1-\alpha)\kappa_j^2 \}> 0$. Then there exists a sequence of solutions $\hat{\textbf{W}}$ such that $\hat{\textbf{W}}\rightarrow_P (\textbf{I}_d, \textbf{0})$ and the limiting distribution of $\sqrt{n} \, vec(\hat{\textbf{W}} - (\textbf{I}_d, \textbf{0}))$ is multivariate normal with mean vector $\textbf{0}$ and the following asymptotic variances.
\begin{align*}
ASV(\hat{w}_{kl}) &= A_l + 1, \quad&l < k, \\
ASV(\hat{w}_{kk}) &= D_k, \quad & \\
ASV(\hat{w}_{kl}) &= A_k, \quad &l > k.
\end{align*}
\end{theorem}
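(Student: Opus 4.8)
The plan is to use affine equivariance (Lemma \ref{lem:sep_ae}) to work, as the statement already does, with $\boldsymbol{\Omega} = \textbf{I}_p$, so that the population rotation is $\textbf{U} = (\textbf{I}_d, \textbf{0})$ with rows equal to the standard basis vectors $\textbf{e}_1,\ldots,\textbf{e}_d$ and the target of $\hat{\textbf{W}}$ is $(\textbf{I}_d, \textbf{0})$. First I would establish a consistent sequence of solutions. By Theorem \ref{theo:sep_ineq} the population index $G_\alpha$ is, inside the relevant orthogonal complement, maximized at $\textbf{e}_k$, and the assumption $\min_{1\le j\le d}\{\alpha\gamma_j^2+(1-\alpha)\kappa_j^2\}>0$ together with the decreasing ordering makes this maximum separated (ties being immaterial up to the equivalence $\equiv$). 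A uniform law of large numbers, valid under the stated moment condition, then turns the population maximizer into a consistent sequence $\hat{\textbf{u}}_k\to_P\textbf{e}_k$, whence $\hat{\textbf{W}}\to_P(\textbf{I}_d,\textbf{0})$.

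The core of the argument is a linearization of the estimating equations underlying Algorithm \ref{algo:sep}. The solution satisfies the Lagrangian conditions $(\textbf{I}_p-\sum_{l<k}\hat{\textbf{u}}_l\hat{\textbf{u}}_l^T)\hat{\textbf{T}}_\alpha(\hat{\textbf{u}}_k)=\hat{\lambda}_k\hat{\textbf{u}}_k$ together with $\|\hat{\textbf{u}}_k\|=1$ and $\hat{\textbf{u}}_k^T\hat{\textbf{u}}_l=0$ for $l<k$, where hats denote sample versions. Writing $\hat{\textbf{u}}_k=\textbf{e}_k+\hat{\textbf{r}}_k$, the norm constraint forces $\hat{r}_{kk}=O_P(n^{-1})$ and the orthogonality constraints give $\hat{r}_{kl}=-\hat{r}_{lk}+O_P(n^{-1})$ for $l<k$, so the only genuinely new parameters are the residuals in the not-yet-extracted directions. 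For a free direction $l>k$ I would Taylor-expand the $l$th component of the estimating equation about $\textbf{e}_k$. The key computation here is the tangential Hessian: the gradient slope $\partial_{u_l}T_{\alpha,l}(\textbf{e}_k)=12(1-\alpha)\kappa_k$ combines with the Lagrange multiplier $\lambda_k=3\alpha\gamma_k^2+4(1-\alpha)\kappa_k\beta_k$ through the cumulant identity $\beta_k-3=\kappa_k$ to produce the coefficient $-(\alpha_1\gamma_k^2+\alpha_2\kappa_k^2)$, which is exactly the square root of the denominator of $A_k$; the $\beta_k\mapsto\kappa_k$ cancellation is the reason $\kappa_k^2$ rather than $\kappa_k\beta_k$ appears. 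By independence the off-diagonal directions decouple to leading order, so each $\hat{r}_{kl}$ is, up to $o_P(n^{-1/2})$, a scalar multiple of a single sample average.

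It then remains to identify the influence functions and propagate the whitening. I would write $\hat{w}_{kl}=\delta_{kl}+\hat{r}_{kl}+(\hat{\boldsymbol{\Sigma}}^{-1/2}-\textbf{I}_p)_{kl}+o_P(n^{-1/2})$ and express each element as an average of i.i.d. contributions, a degree-four polynomial in $\textbf{z}$. The delicate point, which I expect to be the main obstacle, is that the sample standardization $\hat{\boldsymbol{\Sigma}}^{-1/2}$ and centering $\bar{\textbf{x}}$ perturb the whitened data at order $n^{-1/2}$, so the crude fourth-order summand $\alpha_1\gamma_k z_{ik}^2 z_{il}+\alpha_2\kappa_k z_{ik}^3 z_{il}$ arising in $\hat{\textbf{T}}_\alpha$ must be corrected both inside the rotation equation and through the final transform; it is precisely these corrections that convert the naive variances into the combinations $\alpha_1^2\zeta^{(3)}_k+2\alpha_1\alpha_2\zeta^{(34)}_k+\alpha_2^2\zeta^{(4)}_k$. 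Carefully bookkeeping the covariances between the second-order (whitening) and fourth-order (rotation) sample statistics is where the constants come out cleanly, and these variances are finite precisely because eighth moments are assumed, the influence functions being of degree four.

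Finally I would read off the three regimes from these combined influence functions. On the diagonal $\hat{r}_{kk}$ is negligible and only the whitening survives, so $ASV(\hat{w}_{kk})=\tfrac14\mathrm{Var}(z_k^2)=(\kappa_k+2)/4=D_k$. For $l>k$ the element $\hat{w}_{kl}$ is governed by the free-direction residual of row $k$, whose whitening-corrected influence function yields $ASV(\hat{w}_{kl})=A_k$. For $l<k$ the orthogonality transfer $\hat{r}_{kl}=-\hat{r}_{lk}$ expresses $\hat{w}_{kl}$ through the row-$l$ free-direction residual, contributing $A_l$, together with the symmetric off-diagonal whitening term, whose net additional contribution is the constant $1$, giving $ASV(\hat{w}_{kl})=A_l+1$. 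A final application of the multivariate central limit theorem to the stacked influence functions then delivers the joint asymptotic normality with mean $\textbf{0}$ and the stated asymptotic variances.
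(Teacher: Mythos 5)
Your proposal is correct and takes essentially the same route as the paper's proof (Lemma \ref{lem:sep_asymp} followed by variance computations from Table \ref{tab:cov}): consistency via the uniform law of large numbers on the compact unit sphere, linearization of the Lagrangian estimating equations around $\textbf{e}_k$ — your Hessian cancellation $\lambda_k - 12(1-\alpha)\kappa_k = \alpha_1\gamma_k^2 + \alpha_2\kappa_k^2$ via $\beta_k - 3 = \kappa_k$ is exactly the paper's key step — and the same three regimes, including the skew-symmetry-plus-whitening identity $\sqrt{n}\,\hat{w}_{kl} = -\sqrt{n}\,\hat{w}_{lk} - \sqrt{n}\,\hat{s}_{kl} + o_P(1)$ for $l < k$ and the pure-whitening diagonal term giving $D_k$. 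The one point you leave implicit (as does the paper, which calls the variance step straightforward) is that the net contribution $+1$ for $l<k$ requires the cross-covariances between the row-$l$ influence functions $\gamma_l\hat{r}_{lk} - \gamma_l^2\hat{s}_{lk}$ and $\kappa_l\hat{q}_{lk} - \kappa_l\beta_l\hat{s}_{lk}$ and the whitening term $\hat{s}_{kl}$ to vanish, which indeed follows immediately from the entries of Table \ref{tab:cov}.
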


\begin{figure}[t]
\begin{center}
\begin{tikzpicture}
\matrix (m) [left delimiter=(, right delimiter=), matrix of math nodes]{
 D_1 & A_1 & A_1 & \cdots & A_1 & A_1 & \cdots & A_1 & A_1 \\
A_1 + 1 & D_2 & A_2 & \cdots & A_2 & A_2 & \cdots & A_2 & A_2 \\
A_1 + 1 & A_2 + 1 & D_3 & \cdots & A_3 & A_3 & \cdots & A_3 & A_3 \\
\vdots & \vdots & \vdots & \ddots & \vdots & \vdots & \cdots & \vdots & \vdots \\
A_1 + 1 & A_2 + 1 & A_3 + 1 & \cdots & D_d & A_d & \cdots & A_d & A_d \\
};
\draw (m-5-1.south west)--($(m-1-1.north west)+(-2.0ex,0)$)--(m-1-9.north east)--(m-5-9.south east)--(m-5-1.south west);
\draw (m-5-2.south west)--(m-2-1.north east)--(m-2-9.north east);
\draw (m-5-3.south west)--(m-3-2.north east)--(m-3-9.north east);
\draw (m-5-5.south west)--(m-5-5.north west)--(m-5-9.north east);
\draw [dashed] (m-5-5.south east)--(m-1-5.north east);
\draw [decorate,decoration={brace,amplitude=5pt}]
($(m-1-1.north west)+(-2.0ex,1.0ex)$)--($(m-1-5.north east)+(-0.2ex,1.0ex)$) node [black,midway,yshift=16pt,xshift=2pt]
{\footnotesize $\hat{\textbf{W}}_1$};
\draw [decorate,decoration={brace,amplitude=5pt}]
($(m-1-6.north west)+(0.2ex,1.0ex)$)--($(m-1-9.north east)+(0,1.0ex)$) node [black,midway,yshift=16pt,xshift=2pt]
{\footnotesize $\hat{\textbf{W}}_2$};
\end{tikzpicture}
\end{center}
\caption{The asymptotic variances of the individual elements of the estimated deflation-based projection pursuit functional $\hat{\textbf{W}}$.}
\label{fig:asymp_sep}
\end{figure}

For $d=p$, the assumptions concerning third and fourth cumulants may be slightly relaxed as one of the values $\alpha\gamma_j^2+(1-\alpha)\kappa_j^2$ can then be zero. Also, if $\alpha=1$ it is sufficient that only the sixth moments exist.

The first expression of Theorem \ref{theo:sep_asymp} describes the asymptotic variances of the lower diagonal part of $\hat{\textbf{W}}$ and the third expression the asymptotic variances of the upper diagonal part, see Figure \ref{fig:asymp_sep}. Three interesting remarks include: firstly, the asymptotic variances of the first $k$ rows of the unmixing matrix estimate $\hat{\textbf{W}}$ do not depend  on the distribution of any of the further signals; secondly, the asymptotic variances of the rows of the submatrix $\hat{\textbf{W}}_2$ depend only on the distribution of that particular signal and, thirdly, if the $k$th independent component has zero skewness (excess kurtosis) then for all $\alpha \neq 1$ ($\alpha \neq 0)$ the value of $A_k$ is the same, that is, if there is no skewness (excess kurtosis) no cost has to be paid in the asymptotic variances for a ``wrong'' choice of $\alpha \in (0, 1)$. 


\section{ESTIMATING THE SIGNALS SIMULTANEOUSLY}\label{sec:sym}

In this section we extend on the previous one by exploring the simultaneous estimation of all signal components. The methodology then corresponds to the use of $L_2$-norm
and convex combinations of criterion functions in symmetric FastICA.
As in Section \ref{sec:sep}, we first provide the justification for the validity of the approach in the form of the following inequality.

\begin{theorem}\label{theo:sym_ineq}
Assume the model in \eqref{eq:icm_icmodel} and, for a chosen $\alpha\in[0,1]$, let $\textbf{U} = (\textbf{u}_1,\ldots,\textbf{u}_{d})^T$ be the matrix of Theorem \ref{theo:icm_rotate}. Then
\[
\sum_{k=1}^d G_\alpha(\textbf{v}_k) \leq \sum_{k=1}^d G_\alpha(\textbf{u}_k),
\]
for all matrices $\textbf{V} = (\textbf{v}_1,\ldots, \textbf{v}_d)^T \in \mathcal{U}^{d \times p}$ with orthonormal rows.
\end{theorem}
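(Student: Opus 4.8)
The plan is to reduce the maximization to an elementary statement about a single $d\times d$ block of squared coefficients, and then bound the third- and fourth-cumulant contributions separately. First I would pass to independent coordinates. By Theorem \ref{theo:icm_rotate} we have $\textbf{x}_{st} = \textbf{O}\textbf{z}$ with $\textbf{O} = \boldsymbol{\Sigma}^{-1/2}\boldsymbol{\Omega}$ orthogonal, $\textbf{z}$ carrying mutually independent standardized components, and $\textbf{U}\textbf{O} = (\textbf{I}_d, \textbf{0})$. Since the substitution $\textbf{v}_k \mapsto \textbf{O}^T\textbf{v}_k$ is a bijection of $\mathcal{U}^{d\times p}$ that leaves each $G_\alpha(\textbf{v}_k)$ invariant (because $\textbf{v}_k^T\textbf{x}_{st} = (\textbf{O}^T\textbf{v}_k)^T\textbf{z}$), I may assume without loss of generality that $\textbf{O} = \textbf{I}_p$, so that $\textbf{x}_{st} = \textbf{z}$ and $\textbf{u}_k = \textbf{e}_k$.

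Next I would record the explicit form of the index. For any unit vector $\textbf{v}$ the combination $\textbf{v}^T\textbf{z}$ already has mean $0$ and variance $1$, and by additivity and homogeneity of cumulants over the independent $z_j$, together with the vanishing of the third and fourth cumulants of the Gaussian noise components (Assumption \ref{assu:gaus}), one gets $\gamma(\textbf{v}^T\textbf{z}) = \sum_{j=1}^d v_j^3\gamma_j$ and $\kappa(\textbf{v}^T\textbf{z}) = \sum_{j=1}^d v_j^4\kappa_j$. Writing $b_{kj} := (\textbf{v}_k)_j$ for the signal coordinates $j=1,\ldots,d$, the objective becomes
\[
\sum_{k=1}^d G_\alpha(\textbf{v}_k) = \alpha\sum_{k=1}^d\Big(\sum_{j=1}^d b_{kj}^3\gamma_j\Big)^2 + (1-\alpha)\sum_{k=1}^d\Big(\sum_{j=1}^d b_{kj}^4\kappa_j\Big)^2,
\]
whereas the right-hand side equals $\alpha\sum_{j=1}^d\gamma_j^2 + (1-\alpha)\sum_{j=1}^d\kappa_j^2$. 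It therefore suffices to prove the two inequalities $\sum_k(\sum_j b_{kj}^3\gamma_j)^2 \le \sum_j\gamma_j^2$ and $\sum_k(\sum_j b_{kj}^4\kappa_j)^2 \le \sum_j\kappa_j^2$ and then take the convex combination.

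The key structural input is that the array $(b_{kj}^2)$ is doubly sub-stochastic. The row sums obey $\sum_{j=1}^d b_{kj}^2 \le \|\textbf{v}_k\|^2 = 1$, and the column sums obey $\sum_{k=1}^d b_{kj}^2 = \sum_{k=1}^d(\textbf{v}_k^T\textbf{e}_j)^2 \le \|\textbf{e}_j\|^2 = 1$ by Bessel's inequality for the orthonormal system $\textbf{v}_1,\ldots,\textbf{v}_d$; hence also $\sum_j b_{kj}^4 \le (\sum_j b_{kj}^2)^2 \le 1$ and $\sum_k b_{kj}^4 \le 1$. For the fourth-cumulant term I would split $b_{kj}^4\kappa_j = b_{kj}^2\cdot b_{kj}^2\kappa_j$ and use Cauchy--Schwarz to get $(\sum_j b_{kj}^4\kappa_j)^2 \le (\sum_j b_{kj}^4)(\sum_j b_{kj}^4\kappa_j^2) \le \sum_j b_{kj}^4\kappa_j^2$; summing over $k$ and invoking the column bound gives $\sum_k(\sum_j b_{kj}^4\kappa_j)^2 \le \sum_j\kappa_j^2\sum_k b_{kj}^4 \le \sum_j\kappa_j^2$. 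For the third-cumulant term the analogous split $b_{kj}^3\gamma_j = b_{kj}\cdot b_{kj}^2\gamma_j$ and Cauchy--Schwarz yield $(\sum_j b_{kj}^3\gamma_j)^2 \le (\sum_j b_{kj}^2)(\sum_j b_{kj}^4\gamma_j^2) \le \sum_j b_{kj}^4\gamma_j^2$, and summing over $k$ again gives $\le \sum_j\gamma_j^2$.

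I expect the only genuine subtlety to be the double sub-stochasticity, specifically the column-sum bound, which is exactly where the orthonormality of the rows of $\textbf{V}$ (rather than mere unit length) enters, via Bessel's inequality; everything else is routine bookkeeping with Cauchy--Schwarz. As a sanity check one verifies that all the inequalities become equalities at $b_{kj} = \delta_{kj}$, confirming that the matrix $\textbf{U}$ of Theorem \ref{theo:icm_rotate} attains the bound.
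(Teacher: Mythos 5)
Your proposal is correct and follows essentially the same route as the paper: reduce to independent coordinates via an orthogonal transformation, expand $\gamma(\textbf{v}^T\textbf{z})$ and $\kappa(\textbf{v}^T\textbf{z})$ by additivity of cumulants, and then bound each squared-cumulant sum by Cauchy--Schwarz together with the sub-stochasticity of $(v_{kj}^2)$ coming from the orthonormal rows. The paper packages this last step as a single lemma valid for any power $r \geq 2$ (Lemma \ref{lem:app_ineq}), whereas you treat $r=3$ and $r=4$ separately with marginally different splits, but the argument is the same.
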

The inequality in Theorem \ref{theo:sym_ineq} says that the true value of $\textbf{U}$ is a global maximizer of the sum of the objective functions $G_\alpha$ for its individual rows suggesting again an optimization procedure for estimating it. In the context of independent component analysis \citet{comon1994independent} calls projection indices that satisfy inequalities such as the one in Theorem \ref{theo:sym_ineq} \textit{contrasts}, see also \citet{moreau2001generalization}. Both of them show that in general any cumulants of order three or higher can be used in independent component analysis as contrasts.

\begin{definition}\label{def:sym_func}
 For a chosen $\alpha \in [0, 1]$, the symmetric projection pursuit functional based on convex combination of squared third and fourth cumulants is a functional $\textbf{W}(F_\textbf{x}) = \textbf{U} \boldsymbol{\Sigma}^{-1/2}$ where $\boldsymbol{\Sigma} = Cov(\textbf{x})$ and the matrix $\textbf{U} = (\textbf{u}_1,\ldots,\textbf{u}_d)^T$ having orthonormal rows satisfies
\[ \textbf{U} = \underset{\textbf{U} \textbf{U}^T = \textbf{I}_d}{\emph{argmax}} \left(  \sum_{k=1}^d G_\alpha(\textbf{u}_k) \right). \]
\end{definition}

Recall that in symmetric FastICA utilizing third or fourth cumulants one finds $\textbf{U} \in \mathcal{U}$ that maximizes either  $ \sum_{k=1}^p |\gamma(\textbf{u}_k^T \textbf{x}_{st})|$ or $ \sum_{k=1}^p |\kappa(\textbf{u}_k^T \textbf{x}_{st})|$, the $L_1$-norm of the cumulant vector. Since $L_p$-norms are not equal in $\mathbb{R}^{d}$, $d > 1$, the $L_2$-based method of Definition \ref{def:sym_func} differs from symmetric FastICA also for the marginal weights $\alpha \in \{0, 1\}$. Compare this to the discussion after Definition \ref{def:sep_func} in the previous section.

Like in the previous section it is again straightforwardly seen that the proposed functional is affine equivariant implying the following.


\begin{lemma}\label{lem:symm_ae}
The symmetric projection pursuit functional $\textbf{W} (F_\textbf{x})$ in Definition \ref{def:sym_func} is a signal separation functional for every $\alpha \in [0, 1]$.
\end{lemma}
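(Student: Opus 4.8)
The plan is to mirror the argument already sketched for the deflation-based case (Lemma \ref{lem:sep_ae}), verifying the two defining conditions of a signal separation functional (Definition \ref{def:ssf}) for the symmetric functional of Definition \ref{def:sym_func}. For Fisher consistency, condition (i), I would invoke Theorem \ref{theo:sym_ineq}: it states precisely that the true matrix $\textbf{U}$ of Theorem \ref{theo:icm_rotate} globally maximizes the symmetric criterion $\sum_{k=1}^d G_\alpha(\textbf{u}_k)$ over $\mathcal{U}^{d\times p}$. Hence any maximizer $\textbf{U}$ produced by Definition \ref{def:sym_func} attains the same value as the true $\textbf{U}$. Combined with Theorem \ref{theo:icm_rotate}, which gives $\textbf{s} = \textbf{U}\textbf{x}_{st}$ for the true $\textbf{U}$, this yields $\textbf{W}(F_\textbf{x})\textbf{x} = \textbf{U}\textbf{x}_{st} \equiv \textbf{s}$, up to the unavoidable sign and permutation ambiguity encoded by $\equiv$.

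The main subtlety in condition (i) is that the maximizer of the symmetric criterion need not be \emph{unique} as a matrix, only unique up to the equivalence $\equiv$ (sign changes and row permutations) together with an arbitrary orthogonal mixing within the Gaussian noise directions. I would argue that Theorem \ref{theo:sym_ineq}, read carefully, forces any maximizer to have its $d$ rows spanning the same signal directions as the true $\textbf{U}$, because under Assumption \ref{assu:gaus} the Gaussian components contribute zero to each $G_\alpha$ and any row with a nonzero Gaussian component strictly lowers the attainable cumulant value. This pins down each row to a signed signal axis, giving exactly $\textbf{W}(F_\textbf{x})\textbf{x}\equiv\textbf{s}$.

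For condition (ii), affine equivariance, the key is the rotation-equivariance of multivariate standardization recalled in Section \ref{sec:nota}: under $\textbf{x}^*=\textbf{A}\textbf{x}+\textbf{b}$ one has $(\textbf{x}^*)_{st}=\textbf{V}\textbf{x}_{st}$ for some orthogonal $\textbf{V}$. The objective $\sum_{k=1}^d G_\alpha(\textbf{u}_k)$ depends on the data only through $\textbf{x}_{st}$, and the feasible set $\{\textbf{U}:\textbf{U}\textbf{U}^T=\textbf{I}_d\}$ is invariant under the substitution $\textbf{u}_k\mapsto \textbf{V}^T\textbf{u}_k$ (equivalently $\textbf{U}\mapsto\textbf{U}\textbf{V}$), since $\textbf{V}$ is orthogonal. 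Thus if $\textbf{U}$ maximizes the criterion for $\textbf{x}$, then $\textbf{U}\textbf{V}^T$ maximizes it for $\textbf{x}^*$, and the resulting transformed signals agree up to $\equiv$. This is the same invariance-of-the-optimization-problem argument used for the deflation functional, now applied to a single joint optimization rather than a sequence of constrained ones.

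The step I expect to require the most care is the uniqueness-up-to-$\equiv$ claim inside condition (i): establishing that maximizing the \emph{sum} of indices cannot be achieved by mixing two signal directions or by leaking into the Gaussian subspace. For the deflation method this is handled row-by-row through the strict part of the inequality in Theorem \ref{theo:sep_ineq}, whereas in the symmetric case one must rule out all competing orthonormal frames simultaneously. I would resolve this by appealing to the equality conditions implicit in the proof of Theorem \ref{theo:sym_ineq} (which I may assume), concluding that equality in $\sum_{k=1}^d G_\alpha(\textbf{v}_k)\le \sum_{k=1}^d G_\alpha(\textbf{u}_k)$ holds only for $\textbf{V}\equiv\textbf{U}$ under the assumption $\min_{1\le j\le d}\{\alpha\gamma_j^2+(1-\alpha)\kappa_j^2\}>0$; everything else then follows routinely.
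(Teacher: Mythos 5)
Your proposal is correct and takes essentially the same route as the paper, whose entire proof consists of the observation (made explicitly for Lemma \ref{lem:sep_ae} and declared ``straightforwardly seen'' here) that the optimization problem of Definition \ref{def:sym_func} and its constraint $\textbf{U}\textbf{U}^T = \textbf{I}_d$ are invariant under $\textbf{x}_{st} \mapsto \textbf{V}\textbf{x}_{st}$ for orthogonal $\textbf{V}$, with Fisher consistency read off from Theorem \ref{theo:sym_ineq}. Your extra care about uniqueness up to $\equiv$ — tracing the equality conditions in Lemma \ref{lem:app_ineq} and invoking the nondegeneracy condition $\min_{1\le j\le d}\{\alpha\gamma_j^2+(1-\alpha)\kappa_j^2\}>0$, which the lemma's statement omits but the asymptotic Theorems \ref{theo:sep_asymp} and \ref{theo:sym_asymp} assume — actually goes beyond the paper's treatment, which leaves that point implicit.
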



The derivation of the following fixed-point algorithm is again postponed to the Appendix. Defining $\textbf{T}_\alpha(\textbf{U}) := (\textbf{T}_\alpha(\textbf{u}_1) ,\ldots, \textbf{T}_\alpha(\textbf{u}_d))^T$ we then have the following.


\begin{algorithm}[H]
\SetAlgoLined
Initialize $\textbf{U}_{}$\;
$\Delta \longleftarrow \infty$\;
\While{$\Delta > \epsilon$}{
$\textbf{U}_{new} \longleftarrow [ \textbf{T}_\alpha(\textbf{U}_{}) \textbf{T}_\alpha(\textbf{U}_{})^T ]^{-1/2} \textbf{T}_\alpha(\textbf{U}_{})$\;
$\Delta \longleftarrow \min_{\textbf{J}\in \mathcal{J} } \|  \textbf{U}_{new}  -  \textbf{J} \textbf{U}_{}  \| $\;
$\textbf{U}_{} \longleftarrow \textbf{U}_{new}$\;
}
\caption{Symmetric signal separation \label{algo:sym1}}
\end{algorithm}
\begin{remark}\label{rem:init2}
Although the choice of the initial value seems less crucial in this case we advice again to use as initial value for the matrix $\textbf{U}$ the appropriately ordered directions based on FOBI or kJADE.
\end{remark}
Unlike in Algorithm \ref{algo:sep} the order of the extracted signals is not fixed in Algorithm \ref{algo:sym1}. Thus, to choose the most important signals (in the sense of the objective function) one has to compare the values $G_\alpha(\textbf{u}_k)$, $k=1,\ldots,d$, post-extraction.

\if0\blind{Besides the above algorithm, the estimating equations in the Appendix further provide us with the following novel asymptotic variances for the elements of the estimate $\hat{\textbf{W}}$ in the case $\boldsymbol{\Omega} = \textbf{I}_p$.}\fi

\if1\blind{Besides the above algorithm, the estimating equations in the Appendix further provide us with the following novel asymptotic variances for the elements of the estimate $\hat{\textbf{W}}$ in the case $\boldsymbol{\Omega} = \textbf{I}_p$.}\fi

\begin{theorem}\label{theo:sym_asymp}

Let $\textbf{x}_1,\ldots,\textbf{x}_n$ be a random sample  from the independent component model in \eqref{eq:icm_icmodel} with $\boldsymbol{\Omega}=\textbf{I}_p$.
Assume that the eighth moments exist and that $\min_{1\le j \le d} \{\alpha\gamma_j^2+(1-\alpha)\kappa_j^2 \}> 0$.
  Then there exists a sequence of solutions $\hat{\textbf{W}}$ such that $\hat{\textbf{W}} \rightarrow_P (\textbf{I}_d, \textbf{0})$ and the limiting distribution of $\sqrt{n} \, vec(\hat{\textbf{W}} - (\textbf{I}_d, \textbf{0}))$ is multivariate normal with mean vector $\textbf{0}$ and the following asymptotic variances.
\begin{align*}
ASV(\hat{w}_{kl}) &= B_{kl}, \quad &l \leq d, l \neq k, \\
ASV(\hat{w}_{kk}) &= D_{k}, \\
ASV(\hat{w}_{kl}) &= A_{k}, \quad &l > d.
\end{align*}
\end{theorem}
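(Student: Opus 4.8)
The plan is to mirror the proof of Theorem \ref{theo:sep_asymp} as far as possible. By affine equivariance (Lemma \ref{lem:symm_ae}) it suffices to treat $\boldsymbol{\Omega} = \textbf{I}_p$, so that $\textbf{x}_{st} = \textbf{z}$, $\boldsymbol{\Sigma} = \textbf{I}_p$, and the true value of the rotation is $\textbf{u}_k = \textbf{e}_k$ with $\textbf{W} = (\textbf{I}_d, \textbf{0})$. First I would secure the existence of a consistent sequence of maximizers: by Theorem \ref{theo:sym_ineq} together with the positivity assumption $\min_j\{\alpha\gamma_j^2 + (1-\alpha)\kappa_j^2\} > 0$, the population criterion $\sum_k G_\alpha(\textbf{u}_k)$ has a well-separated maximum at $(\textbf{I}_d,\textbf{0})$ (up to sign and permutation); uniform convergence of the empirical criterion, guaranteed by the moment assumption, then yields $\hat{\textbf{U}} \rightarrow_P (\textbf{I}_d,\textbf{0})$ and hence $\hat{\textbf{W}} = \hat{\textbf{U}}\hat{\boldsymbol{\Sigma}}^{-1/2} \rightarrow_P (\textbf{I}_d,\textbf{0})$ by the continuous mapping theorem.

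Next I would write down the estimating equations. Since the constraint $\textbf{U}\textbf{U}^T = \textbf{I}_d$ is symmetric, the Lagrangian stationarity conditions read $\textbf{T}_\alpha(\hat{\textbf{u}}_k) = \sum_{l=1}^d \hat{\theta}_{kl}\hat{\textbf{u}}_l$ with a \emph{symmetric} multiplier matrix $\hat{\boldsymbol{\Theta}} = (\hat{\theta}_{kl})$; at the truth $\boldsymbol{\Theta}$ is diagonal with $\theta^0_{kk} = \alpha_1\gamma_k^2 + \alpha_2\kappa_k^2$. This is the point where careful bookkeeping of the estimated standardization $\hat{\boldsymbol{\Sigma}}^{-1/2}$ (equivalently, working with the modified gradient $\textbf{T}^*_\alpha$ of Remark \ref{rem:grad}, which shares the same solutions) replaces the fourth moment $\beta_k$ by the cumulant $\kappa_k$ and produces the denominators appearing in $A_k$ and $B_{kl}$. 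Using $\hat{\boldsymbol{\Sigma}}^{-1/2} = \textbf{I}_p - \tfrac12(\hat{\boldsymbol{\Sigma}}-\textbf{I}_p) + o_P(n^{-1/2})$ I would then linearize $\sqrt{n}(\hat{w}_{kl} - \delta_{kl}) \approx \sqrt{n}(\hat{u}_{kl} - \delta_{kl}) - \tfrac12\sqrt{n}(\hat{\sigma}_{kl} - \delta_{kl})$ and split into three cases. For the diagonal, orthonormality forces $\sqrt{n}(\hat{u}_{kk}-1) = o_P(1)$, so only the standardization contributes and $ASV(\hat{w}_{kk}) = \tfrac14 Var(z_k^2) = \tfrac14(\kappa_k+2) = D_k$, exactly as in the deflation case. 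For the noise block $l > d$, the stationarity component in a Gaussian direction reduces (the cross terms $\hat{\theta}_{kl}\hat{u}_{lm}$ being $o_P(n^{-1/2})$) to $[\textbf{T}_\alpha(\hat{\textbf{u}}_k)]_l \approx \theta^0_{kk}\hat{u}_{kl}$, which is the identical equation appearing in the proof of Theorem \ref{theo:sep_asymp}, giving $ASV = A_k$.

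The genuinely new work, and the main obstacle, is the signal off-diagonal block $l \le d$, $l \ne k$, where the symmetric multiplier couples rows $k$ and $l$. Projecting the $k$th and $l$th stationarity equations onto $\textbf{e}_l$ and $\textbf{e}_k$ gives, to first order, $[\textbf{T}_\alpha(\hat{\textbf{u}}_k)]_l \approx \hat{\theta}_{kl} + \theta^0_{kk}\hat{u}_{kl}$ and $[\textbf{T}_\alpha(\hat{\textbf{u}}_l)]_k \approx \hat{\theta}_{lk} + \theta^0_{ll}\hat{u}_{lk}$. Exploiting the symmetry $\hat{\theta}_{kl} = \hat{\theta}_{lk}$ to eliminate the multiplier, together with the first-order orthonormality relation $\hat{u}_{lk} \approx -\hat{u}_{kl}$, I obtain
\[
\hat{u}_{kl} \approx \frac{[\textbf{T}_\alpha(\hat{\textbf{u}}_k)]_l - [\textbf{T}_\alpha(\hat{\textbf{u}}_l)]_k}{\theta^0_{kk} + \theta^0_{ll}},
\]
whose denominator is precisely the square root of the denominator of $B_{kl}$. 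It then remains to linearize each $[\textbf{T}_\alpha(\cdot)]$ as a centered sum of i.i.d. functions of $\textbf{z}_i$ — tracking the fluctuations of the empirical cumulants $\hat{\gamma}, \hat{\kappa}$, the empirical expectations, and the standardization — to add the $-\tfrac12\hat{\sigma}_{kl}$ term, apply the multivariate central limit theorem, and collapse the resulting variance to $B_{kl}$.

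I expect this last variance computation to be the hard part. Concretely, one must show that the asymmetry between $B_{kl}$ and $B_{lk}$ (the extra terms $\gamma_l^4$, $\gamma_l^2\kappa_l^2$, $\kappa_l^4$ carrying the column index) arises from the interaction of the standardization contribution $-\tfrac12\hat{\sigma}_{kl}$ with the column-specific score, while the symmetric pieces $\zeta^{(3)}_k + \zeta^{(3)}_l$, $\zeta^{(34)}_k + \zeta^{(34)}_l$ and $\zeta^{(4)}_k + \zeta^{(4)}_l$ come from the two numerator terms in the display above. Verifying that all remainder terms are $o_P(n^{-1/2})$ uniformly — which is what forces the eighth-moment assumption — and assembling the i.i.d. decompositions across all blocks simultaneously to justify the joint asymptotic normality is where the bulk of the routine but delicate algebra lies.
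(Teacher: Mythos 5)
Your proposal is correct and takes essentially the same route as the paper: your symmetric-multiplier stationarity conditions, eliminated via $\hat{\theta}_{kl}=\hat{\theta}_{lk}$ together with the first-order antisymmetry $\hat{u}_{lk}\approx-\hat{u}_{kl}$, are exactly the paper's pair of estimating equations \eqref{eq:ee1}--\eqref{eq:ee2} in the proof of Lemma \ref{lem:sym_asymp}; your noise-block and diagonal reductions match that lemma, and your denominator $\theta^0_{kk}+\theta^0_{ll}$ (after the $\beta_k\to\kappa_k$ bookkeeping you correctly flag) agrees with the paper's $3\alpha(\gamma_k^2+\gamma_l^2)+4(1-\alpha)(\kappa_k^2+\kappa_l^2)$. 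The only differences are organizational --- you expand $\hat{\textbf{U}}$ and $\hat{\boldsymbol{\Sigma}}^{-1/2}$ separately where the paper folds both into $\hat{\textbf{W}}$ --- and your deferral of the final variance algebra to the covariance table mirrors the paper's own level of detail.
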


\begin{figure}[t]
\begin{center}
\begin{tikzpicture}
\matrix (m) [left delimiter=(, right delimiter=), matrix of math nodes]{
D_1 & B_{12} & B_{13} & \cdots & B_{1d} & A_1 & \cdots & A_1 & A_1 \\
B_{21} & D_2 & B_{23} & \cdots & B_{2d} & A_2 & \cdots & A_2 & A_2 \\
B_{31} & B_{32} & D_3 & \cdots & B_{3d} & A_3 & \cdots & A_3 & A_3 \\
\vdots & \vdots & \vdots & \ddots & \vdots & \vdots & \cdots & \vdots & \vdots \\
B_{d1} & B_{d2} & B_{d3} & \cdots & D_d & A_d & \cdots & A_d & A_d \\
};
\draw (m-5-1.south west)--($(m-1-1.north west)+(-0.4ex,0)$)--(m-1-9.north east)--(m-5-9.south east)--(m-5-1.south west);

\draw (m-5-2.south west)--(m-5-2.north west);
\draw (m-3-2.south west)--(m-1-2.north west);

\draw (m-2-1.north west)--(m-2-3.north east);
\draw (m-2-5.north west)--(m-2-9.north east);

\draw (m-5-3.south west)--(m-5-3.north west);
\draw (m-3-2.south east)--(m-1-3.north west);

\draw (m-3-1.north west)--(m-2-3.south east);
\draw (m-3-5.north west)--(m-3-9.north east);



\draw (m-5-5.south west)--(m-5-5.north west);

\draw (m-5-5.north west)--(m-5-9.north east);

\draw [dashed] (m-5-5.south east)--(m-1-5.north east);
\draw [decorate,decoration={brace,amplitude=5pt}]
($(m-1-1.north west)+(-2.0ex,1.0ex)$)--($(m-1-5.north east)+(-0.2ex,1.0ex)$) node [black,midway,yshift=16pt,xshift=2pt]
{\footnotesize $\hat{\textbf{W}}_1$};
\draw [decorate,decoration={brace,amplitude=5pt}]
($(m-1-6.north west)+(0.2ex,1.0ex)$)--($(m-1-9.north east)+(0,1.0ex)$) node [black,midway,yshift=16pt,xshift=2pt]
{\footnotesize $\hat{\textbf{W}}_2$};
\end{tikzpicture}
\end{center}
\caption{The asymptotic variances of the individual elements of the estimated symmetric projection pursuit functional $\hat{\textbf{W}}$.}
\label{fig:asymp_sym}
\end{figure}

Again, if $d=p$ it is sufficient that at most one of the values $\alpha\gamma_j^2+(1-\alpha)\kappa_j^2$ is zero and for $\alpha=1$ it is sufficient that the sixth moments exist.

A visual description of Theorem \ref{theo:sym_asymp} is given in Figure \ref{fig:asymp_sym}. Comparing it to Figure \ref{fig:asymp_sep} shows one fundamental difference between the two approaches; every off-diagonal element of $\hat{\textbf{W}}_1$ has asymptotic variance depending on both the row and column index signals. However, the matrix $\hat{\textbf{W}}_2$ has equal asymptotic behavior for both methods. This aspect is further discussed in Section \ref{sec:simu}. Analogously to the deflation-based method, if the $k$th and $l$th independent components both have zero skewness (excess kurtosis) then all choices of $\alpha \neq 1$ ($\alpha \neq 0)$ yield the same value for $B_{kl}$. 

\begin{remark}\label{cor:asymp_equiv}
Comparing the asymptotic variances in Theorem \ref{theo:sym_asymp} in the marginal case $d = p$ and $\alpha = 0$ with those of JADE in \cite{miettinen2014fourth} shows that the two are equal. Thus the symmetric $L_2$-based projection pursuit using fourth cumulants provides with a lighter computational load the same asymptotic accuracy as given by the classical JADE method.
\end{remark}

\section{LIMITING EFFICIENCY OF THE SIGNAL SEPARATION ESTIMATE} \label{sec:simu}

\subsection{The two parts of the signal separation estimate}

Consider the division of the signal separation estimate into the two parts shown in Figures \ref{fig:asymp_sep} and \ref{fig:asymp_sym}, $\hat{\textbf{W}} = (\hat{\textbf{W}}_1, \hat{\textbf{W}}_2)$. Assuming $\mathbf{\Omega}=\textbf{I}_p$, which is again sufficient because of the affine equivariance, we can then write.
\[\hat{\textbf{W}}\textbf{x} = \hat{\textbf{W}}_1 \textbf{s} + \hat{\textbf{W}}_2 \textbf{n}. \]
Thus the variation of $\hat{\textbf{W}}_1$ around $\textbf{I}_d$ tells how well the individual signals are separated from each other and the variation of $\hat{\textbf{W}}_2$ around zero matrix informs of the ability to separate between the signal and noise subspaces. The asymptotic variances of the elements of these matrices therefore measure the accuracy of the respective separations. Using the affine equivariance of the estimate we can prove the following theorem implying that $\hat{\textbf{W}}_1$ and $\hat{\textbf{W}}_2$ are asymptotically independent.
\begin{theorem}\label{theo:as independence}
The covariance matrix of the limiting distribution of $\sqrt{n} \, vec(\hat{\textbf{W}} - (\textbf{I}_d, \textbf{0}))=vec((\sqrt{n}(\hat{\textbf{W}_1} -\textbf{I}_d),\sqrt{n}\hat{\textbf{W}}_2 ))$
is block-diagonal with $p-d+1$ blocks of sizes $d^2\times d^2$, $d\times d,\ldots,d\times d$. The last $p-d$ block covariance matrices are the same.
\end{theorem}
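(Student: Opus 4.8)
The plan is to exploit affine equivariance together with the fact that a standard Gaussian noise vector is invariant in distribution under signed permutations (indeed under any orthogonal transformation) of its coordinates. As elsewhere it suffices to treat $\boldsymbol{\Omega} = \textbf{I}_p$, so that $\textbf{x} = \textbf{z} = (\textbf{s}^T, \textbf{n}^T)^T$ with $\textbf{n} \sim N(\textbf{0}, \textbf{I}_{p-d})$. Fix an orthogonal $\textbf{V} \in \mathbb{R}^{(p-d)\times(p-d)}$ and set $\textbf{A} = \mathrm{diag}(\textbf{I}_d, \textbf{V})$. Since $\textbf{V}\textbf{n} \stackrel{d}{=} \textbf{n}$ we have $\textbf{A}\textbf{x} \stackrel{d}{=} \textbf{x}$. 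Let $\hat{\textbf{W}}$ and $\hat{\textbf{W}}^{\textbf{A}}$ denote the estimators computed from $\textbf{x}_1,\ldots,\textbf{x}_n$ and from $\textbf{A}\textbf{x}_1,\ldots,\textbf{A}\textbf{x}_n$. Because the functional is a signal separation functional (Lemmas \ref{lem:sep_ae} and \ref{lem:symm_ae}) it is affine equivariant, giving $\hat{\textbf{W}}^{\textbf{A}}\textbf{A} \equiv \hat{\textbf{W}}$, while the distributional identity $\textbf{A}\textbf{x} \stackrel{d}{=} \textbf{x}$ gives $\hat{\textbf{W}}^{\textbf{A}} \stackrel{d}{=} \hat{\textbf{W}}$.

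First I would pin down the sign--permutation indeterminacy. By $\hat{\textbf{W}}^{\textbf{A}}\textbf{A} \equiv \hat{\textbf{W}}$, the matrix $\hat{\textbf{W}}\textbf{A}^{-1} = (\hat{\textbf{W}}_1, \hat{\textbf{W}}_2\textbf{V}^T)$ lies in the solution class of the transformed problem, and since $\hat{\textbf{W}} \rightarrow_P (\textbf{I}_d, \textbf{0})$ it converges to $(\textbf{I}_d, \textbf{0})$. Any nontrivial row operation $\textbf{P}\textbf{J}$ sends this limit to $(\textbf{P}\textbf{J}, \textbf{0}) \neq (\textbf{I}_d, \textbf{0})$, so for large $n$ the root $\hat{\textbf{W}}^{\textbf{A}}$ selected to converge to $(\textbf{I}_d, \textbf{0})$ coincides with $\hat{\textbf{W}}\textbf{A}^{-1}$. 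Combining this with $\hat{\textbf{W}}^{\textbf{A}} \stackrel{d}{=} \hat{\textbf{W}}$ yields, in the limit, $\sqrt{n}(\hat{\textbf{W}} - (\textbf{I}_d, \textbf{0})) \stackrel{d}{=} \sqrt{n}(\hat{\textbf{W}}\textbf{A}^{-1} - (\textbf{I}_d, \textbf{0}))$, so that if $(\textbf{G}_1, \textbf{G}_2)$ denotes the joint Gaussian limit of $(\sqrt{n}(\hat{\textbf{W}}_1 - \textbf{I}_d), \sqrt{n}\hat{\textbf{W}}_2)$, then
\[
(\textbf{G}_1, \textbf{G}_2) \stackrel{d}{=} (\textbf{G}_1, \textbf{G}_2\textbf{V}^T) \quad \text{for every orthogonal } \textbf{V}.
\]

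Next I would read off the covariance consequences. Using $vec(\textbf{G}_2\textbf{V}^T) = (\textbf{V} \otimes \textbf{I}_d)\, vec(\textbf{G}_2)$, the displayed invariance forces $Cov(vec(\textbf{G}_2))$ to commute with $\textbf{V}\otimes\textbf{I}_d$ and $Cov(vec(\textbf{G}_1), vec(\textbf{G}_2))$ to be invariant under right multiplication by $\textbf{V}^T\otimes\textbf{I}_d$, for all orthogonal $\textbf{V}$; in fact only the signed permutations are needed. Partitioning $Cov(vec(\textbf{G}_2))$ into $(p-d)\times(p-d)$ blocks of size $d\times d$, a single coordinate sign change $\textbf{V} = \mathrm{diag}(1,\ldots,-1,\ldots,1)$ forces every off--diagonal block, and the whole cross--covariance of $vec(\textbf{G}_2)$ with $vec(\textbf{G}_1)$, to equal its own negative, hence to vanish; a transposition of two noise coordinates identifies the corresponding diagonal blocks, so all $p-d$ diagonal blocks equal a common $d\times d$ matrix $\textbf{M}$. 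Joint normality upgrades these vanishing covariances to independence.

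Assembling the pieces, the limiting covariance of $vec((\sqrt{n}(\hat{\textbf{W}}_1 - \textbf{I}_d), \sqrt{n}\hat{\textbf{W}}_2))$ is block diagonal with one leading $d^2\times d^2$ block, namely the covariance of $vec(\textbf{G}_1)$ about which nothing further is asserted, followed by $p-d$ identical $d\times d$ blocks $\textbf{M}$, one per noise column, which is exactly the claim. The one genuinely delicate point is the root--matching step that removes the sign--permutation ambiguity and justifies $\textbf{P}\textbf{J} = \textbf{I}_d$ for the distinguished root; once that is in place the rest is the elementary invariance computation above, and only the signed--permutation subgroup of the orthogonal group is actually used.
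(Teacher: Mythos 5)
Your proposal is correct and follows essentially the same route as the paper's own proof: invariance of the Gaussian noise under the block-orthogonal transformation $\mathrm{diag}(\textbf{I}_d,\textbf{V})$, affine equivariance of the signal separation functional, and the identity $vec(\textbf{A}\textbf{B}\textbf{C}) = (\textbf{C}^T \otimes \textbf{A})\,vec(\textbf{B})$, with sign-change matrices in $\mathcal{J}$ killing the off-diagonal blocks and permutations in $\mathcal{P}$ identifying the last $p-d$ diagonal blocks. Your explicit root-matching argument fills in the step the paper dispatches parenthetically (reordering and sign-flipping the rows of the transformed functional), but the substance of the two proofs is the same.
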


\subsection{Comparison of individual signal estimates}

\begin{figure}[t]
    \begin{center}
    \includegraphics[width=0.8\textwidth]{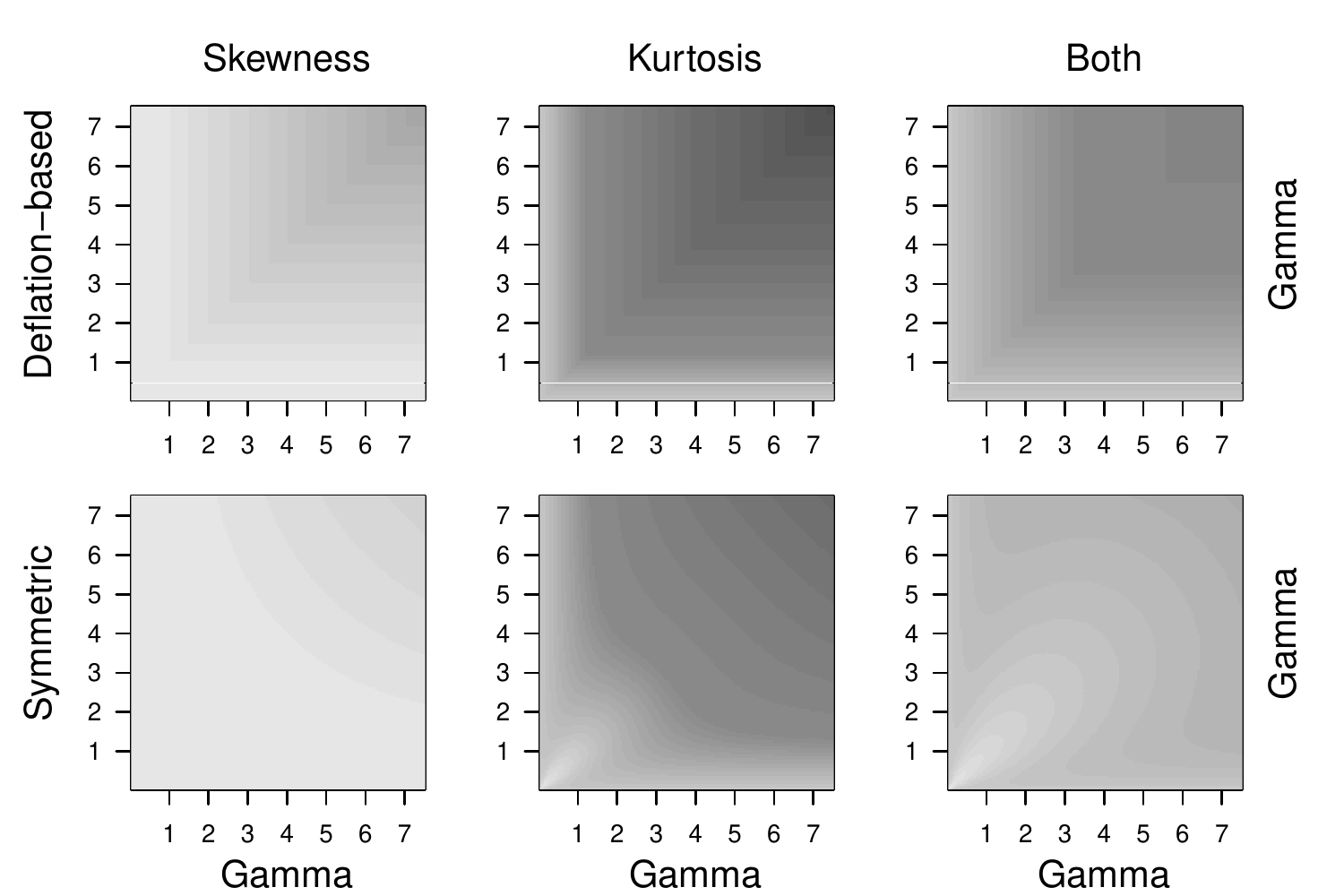}
    \caption{Contour plots of $ASV(\hat{w}_{12}) + ASV(\hat{w}_{21})$ for different combinations of methods and weighting $\alpha$ when both the $x$-axis and the $y$-axis independent components have a gamma distribution. The darker the color, the larger the sum of variances.}
    \label{fig:sim_11}
    \end{center}
\end{figure}

\begin{figure}[t]
    \centering
    \includegraphics[width=0.8\textwidth]{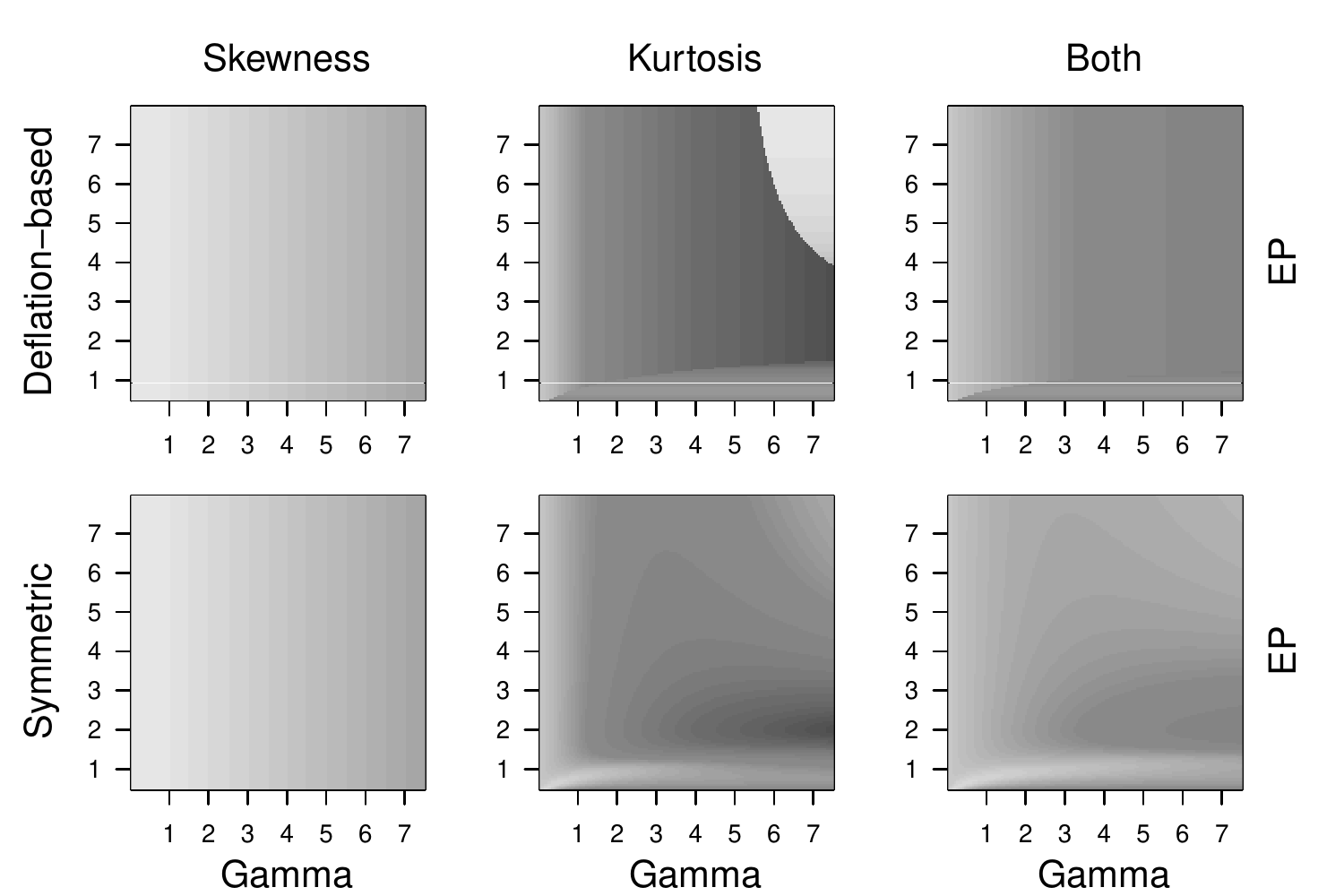}
    \caption{Contour plots of $ASV(\hat{w}_{12}) + ASV(\hat{w}_{21})$ for different combinations of methods and weighting $\alpha$ when the $x$-axis independent component has a gamma distribution and the $y$-axis independent component has an exponential power distribution. The darker the color, the larger the sum of variances.}
    \label{fig:sim_12}
\end{figure}

Both discussed projection pursuit methods produce consistent estimates, $\sqrt{n} \ vec(\hat{\textbf{W}}_1 - \textbf{I}_d)\rightarrow_d N_{d^2}(\textbf{0}, \boldsymbol{\Phi}_1)$, and their comparison should thus be made using the asymptotic covariance matrix $\boldsymbol{\Phi}_1$. A global measure of variation is given by
\begin{eqnarray*}
tr(\boldsymbol{\Phi}_1)&=& \sum_{k=1}^d \sum_{l=1}^d ASV(\hat{w}_{kl}) \\
&=& \sum_{k=1}^d ASV(\hat{w}_{kk}) +\sum_{k=1}^{d-1}\sum_{l=k+1}^d \left(ASV(\hat{w}_{kl}) + ASV(\hat{w}_{lk})\right),
\end{eqnarray*}
where $\sum_{k=1}^d ASV(\hat{w}_{kk})$ is the same for both estimates and for all $\alpha$, see Theorems \ref{theo:sep_asymp} and \ref{theo:sym_asymp}. Conveniently, for both methods $ASV(\hat{w}_{kl})$, for $k\ne l$, depends only on the distributions of the components $z_k$ and $z_l$ and is not affected by the dimensions $d$ and $p$. Consequently it is sufficient to consider, as in \cite{miettinen2014fourth}, only the values $ V_{12} := ASV(\hat{w}_{12}) + ASV(\hat{w}_{21})$ for different choices of two marginal distributions when measuring their influence to the asymptotic performance. For the symmetric projection pursuit we have $V_{12} = B_{12} + B_{21}$ and, assuming that the component $z_j$, $j=1,2$, is extracted first, for the deflation-based projection pursuit we have $V_{12} = 2 A_j + 1$.

The two marginal distributions were chosen as standardized versions of either the exponential power distribution, $\mbox{EP}(\lambda)$, or the gamma distribution, $\Gamma (\lambda)$, with the respective densities
\[
f(z) \propto e^{-\tau_1 |z|^{\lambda}}, \ z \in \mathbb{R} \ \ \mbox{and} \ \ f(z) \propto z^{\lambda-1} e^{-\tau_2 z}, \ z>0,
\]
with $\tau_1,\tau_2>0$ and a positive shape parameter $\lambda$. The quantity $V_{12}$ depends on the marginal distributions only through their shape parameters $\lambda$, see \cite{miettinen2014fourth} for more details. For both methods, we distinguished the versions using third cumulants only, $\alpha=1$, fourth cumulants only, $\alpha=0$, and a convex combination with the weight $\alpha = 0.8$, see Section \ref{sec:sub} for motivating this choice. We then computed the value of $V_{12}$ for different combinations of distribution families and shape parameters using numerical integration and Theorems \ref{theo:sep_asymp} and \ref{theo:sym_asymp} and the results are shown in Figures \ref{fig:sim_11} and \ref{fig:sim_12}. Note that we do not report the results in cases where both components come from the symmetric exponential power family as then the asymptotic variances of the estimates with $0<\alpha<1$ are the same as the asymptotic variances of the estimates with $\alpha=0$ (since skewness carries no information, both distributions being symmetric) and the results for $\alpha=0$ are already given in \cite{miettinen2014fourth}. In the figures a darker shade indicates a larger value so that the performance of a particular method is at its best in the areas of lighter color.

From the contour plots it is evident that performance-wise the methods are very close to each other, although, not counting the anomaly in the middle plot of the top row of Figure \ref{fig:sim_12}, the symmetric projection pursuit gives in general asymptotically slightly more accurate results. Also, as the distributions in Figure \ref{fig:sim_12} are respectively skewed and symmetric, skewness contains in these settings more information on the separation than kurtosis, explaining  why the left-most plots have the overall lightest shade.

\subsection{Comparison of subspace estimates}

Apart from estimating the individual signals, another interesting objective is the estimation of the whole signal subspace. Assuming $\boldsymbol{\Omega}=\textbf{I}_p$,  we approach the problem via the orthogonal projection matrix \[\textbf{P} := \textbf{W}^T (\textbf{W} \textbf{W}^T)^{-1} \textbf{W},\] giving an orthogonal projection onto the signal subspace. Our main result is then as follows.

\begin{theorem}\label{theo:proj_asymp}
Let $\hat{\textbf{W}} = (\hat{\textbf{W}}_{1},  \hat{\textbf{W}}_{2})$ be a signal separation functional estimate satisfying $\sqrt{n} (\hat{\textbf{W}} - (\textbf{I}_d, \textbf{0}))=O_P(1)$. Then the projection matrix $\hat{\textbf{P}} = \hat{\textbf{W}}{}^T (\hat{\textbf{W}} \hat{\textbf{W}}{}^T)^{-1} \hat{\textbf{W}}$  satisfies
\[\sqrt{n}\left(\hat{\textbf{P}} -
\begin{pmatrix}
\textbf{I}_d & \textbf{0} \\
\textbf{0} & \textbf{0}
\end{pmatrix}
\right) = \sqrt{n}
\begin{pmatrix}
\textbf{0} & \hat{\textbf{W}}_{2} \\
\hat{\textbf{W}}{}_{2}^T & \textbf{0}
\end{pmatrix} + o_P(1).
\]
\end{theorem}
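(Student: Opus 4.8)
The plan is to view $\hat{\textbf{P}}$ as a smooth matrix function of the two small perturbations $\textbf{E}_1 := \hat{\textbf{W}}_1 - \textbf{I}_d$ and $\textbf{E}_2 := \hat{\textbf{W}}_2$ and to expand it to first order. The hypothesis $\sqrt{n}(\hat{\textbf{W}} - (\textbf{I}_d, \textbf{0})) = O_P(1)$ gives $\textbf{E}_1 = O_P(n^{-1/2})$ and $\textbf{E}_2 = O_P(n^{-1/2})$, so throughout I would systematically collect and discard every term that is quadratic in $(\textbf{E}_1, \textbf{E}_2)$, i.e.\ of order $O_P(n^{-1})$, since after multiplication by $\sqrt{n}$ these become $o_P(1)$.

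The first step is to expand the Gram matrix. Writing $\hat{\textbf{W}} = (\textbf{I}_d + \textbf{E}_1, \textbf{E}_2)$ one gets
\[
\hat{\textbf{W}} \hat{\textbf{W}}^T = \textbf{I}_d + \textbf{E}_1 + \textbf{E}_1^T + \textbf{E}_1 \textbf{E}_1^T + \textbf{E}_2 \textbf{E}_2^T = \textbf{I}_d + \textbf{E}_1 + \textbf{E}_1^T + O_P(n^{-1}).
\]
Because this converges in probability to $\textbf{I}_d$, it is invertible with probability tending to one, and a Neumann-series expansion yields
\[
(\hat{\textbf{W}} \hat{\textbf{W}}^T)^{-1} = \textbf{I}_d - \textbf{E}_1 - \textbf{E}_1^T + O_P(n^{-1}).
\]
Note in particular that $\textbf{E}_2$ enters the inverse only through a quadratic term and is therefore negligible at this stage.

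Next I would substitute this into $\hat{\textbf{P}} = \hat{\textbf{W}}^T (\hat{\textbf{W}} \hat{\textbf{W}}^T)^{-1} \hat{\textbf{W}}$ and read off its four blocks. The off-diagonal blocks are immediate: the $(1,2)$ block equals $(\textbf{I}_d + \textbf{E}_1)^T (\hat{\textbf{W}}\hat{\textbf{W}}^T)^{-1} \textbf{E}_2 = \textbf{E}_2 + O_P(n^{-1}) = \hat{\textbf{W}}_2 + O_P(n^{-1})$, and by symmetry of $\hat{\textbf{P}}$ the $(2,1)$ block is $\hat{\textbf{W}}_2^T + O_P(n^{-1})$; the $(2,2)$ block is $\textbf{E}_2^T (\hat{\textbf{W}}\hat{\textbf{W}}^T)^{-1} \textbf{E}_2 = O_P(n^{-1})$, being quadratic in $\textbf{E}_2$. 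The one computation that requires care, and which I expect to be the main obstacle, is the $(1,1)$ block
\[
(\textbf{I}_d + \textbf{E}_1)^T (\textbf{I}_d - \textbf{E}_1 - \textbf{E}_1^T) (\textbf{I}_d + \textbf{E}_1) + O_P(n^{-1}),
\]
whose surviving first-order contribution is $\textbf{E}_1^T - \textbf{E}_1 - \textbf{E}_1^T + \textbf{E}_1 = \textbf{0}$: the linear terms cancel exactly, leaving $\textbf{I}_d + O_P(n^{-1})$. Assembling the four blocks, subtracting the target block-diagonal matrix $\mathrm{diag}(\textbf{I}_d, \textbf{0})$, and multiplying by $\sqrt{n}$ then gives precisely the claimed identity, with the $o_P(1)$ remainder collecting the $O_P(n^{-1/2})$ residuals. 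The only genuinely delicate point is the order bookkeeping, namely ensuring that no linear term in $\textbf{E}_2$ is accidentally dropped from the off-diagonal blocks while simultaneously confirming that the $(1,1)$ and $(2,2)$ blocks are correct only up to $O_P(n^{-1})$.
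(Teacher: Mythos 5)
Your proposal is correct and is essentially the paper's argument: both are first-order perturbation expansions of $\hat{\textbf{P}}$ that linearize $(\hat{\textbf{W}}\hat{\textbf{W}}^T)^{-1}$, discard all terms quadratic in the $O_P(n^{-1/2})$ perturbation, and rest on the same cancellation of the linear terms in the signal block. The only cosmetic difference is that you linearize the inverse via a Neumann series and track the four blocks explicitly, whereas the paper obtains the same linearization from the identity $\sqrt{n}\bigl((\hat{\textbf{W}}\hat{\textbf{W}}^T)^{-1}(\hat{\textbf{W}}\hat{\textbf{W}}^T) - \textbf{I}_d\bigr) = \textbf{0}$ together with Slutsky's theorem, keeping the computation in compact matrix form until the final step.
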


Theorem \ref{theo:proj_asymp} essentially states that the asymptotic behavior of the estimated projection matrix $\hat{\textbf{P}}$ depends only on that
of $\hat{\textbf{W}}_{2}$, and not on how well the signals are separated from each other as measured by $\hat{\textbf{W}}_{1}$.
Since $\sqrt{n} \ vec(\hat{\textbf{W}}_2) \rightarrow_d N_{(p-d)^2}(\textbf{0}, \boldsymbol{\Phi}_2)$, a natural measure for a particular method's ability to estimate the signal subspace is then
\[ tr(\boldsymbol{\Phi}_2) = \sum_{k=1}^d \sum_{l=d+1}^p ASV(\hat{w}_{kl})= (p-d) \sum_{k=1}^d  ASV(\hat{w}_{k,d+1}),\] the sum of asymptotic variances of the elements of $\hat{\textbf{W}}_{2}$. Comparison of Figures \ref{fig:asymp_sep} and \ref{fig:asymp_sym} now easily yields the conclusion that both discussed projection pursuit methods are asymptotically equally adept at estimating the signal subspace. That is, if we are only interested in separating the signal from the noise, it does not matter asymptotically whether we use the method of Definition \ref{def:sep_func} or \ref{def:sym_func}.

\subsection{Finite-sample performance}

We next compare the methods' performances as the relative amount of noise in the model is increased. Assuming $d < p$, one possible measure for the accuracy of the separation is given by
\begin{align}\label{eq:nonsquare_md}
D(\hat{\textbf{W}}) := \frac{1}{\sqrt{d}} \underset{\textbf{C} \in \mathcal{C}}{\mbox{inf}} \left \| \textbf{C} \hat{\textbf{W}} \boldsymbol{\Omega} - (\textbf{I}_d, \textbf{0}) \right \|,
\end{align}
where $\boldsymbol{\Omega}$ is the true mixing matrix. The measure \eqref{eq:nonsquare_md} can be seen as an analogue of the minimum distance index (MDI) \citep{ilmonen2010new} for non-square matrices and similar techniques as used in proving Theorem 4.1 in \cite{ilmonen2012asymptotics} show that $0 \leq D(\hat{\textbf{W}}) \leq 1$, the value zero indicating perfect separation. Furthermore, the techniques used in \cite{ilmonen2012asymptotics} can be used to show that the computation of $D$ can be reduced to an optimization problem over a set of finite support and that the limiting distribution of $nd D^2(\hat{\textbf{W}})$ is a weighted sum of independent chi-squared variables with one degree of freedom. The expectation of this limiting distribution is then
 the sum of the weights, that is, the sum of asymptotic variances of the off-diagonal elements of $\hat{\textbf{W}}$, given by $tr(\boldsymbol{\Phi}_1) - \sum_{k=1}^d ASV(\hat{w}_{kk}) + tr(\boldsymbol{\Phi}_2)$.

\begin{figure}[t]
    \centering
    \includegraphics[width=1\textwidth]{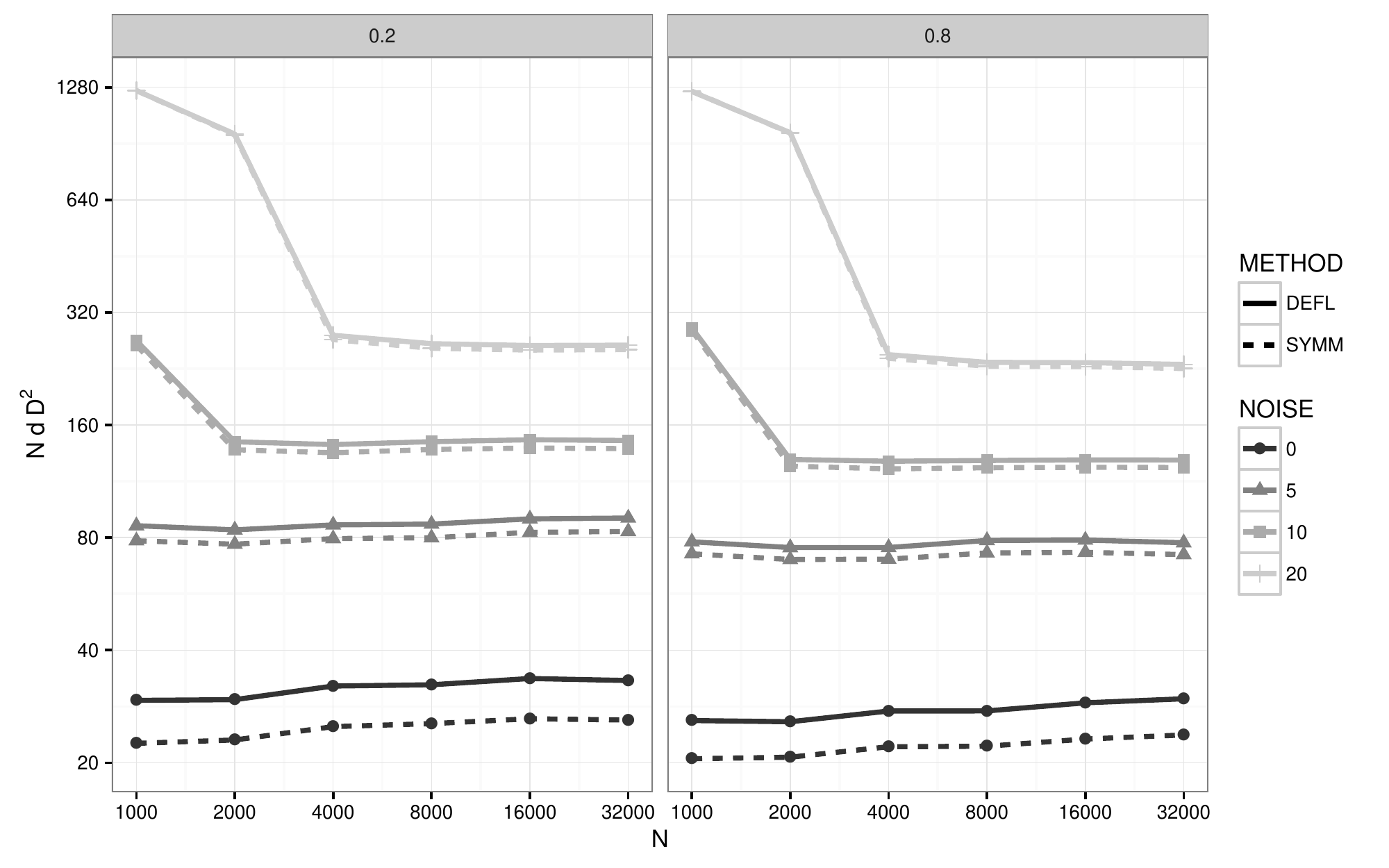}
    \caption{Average values of $ndD^2(\hat{\textbf W})$ for different noise levels when using deflation-based and symmetric approaches with two different values of $\alpha$.}
    \label{fig:sim_2}
\end{figure}

For our simulation setting we chose $d = 3$, the signal vector $\textbf{s}$ having Uniform$(0, 1)$, Exponential$(1)$ and Laplace$(0, 1)$ components each standardized to have zero mean and unit variance and the amount of Gaussian noise was taken to be $p - d = 0$, $5$, $10$, $20$. Moreover, we considered the sample sizes $n = 1000$, $2000$, $4000$, $8000$, $16000$, $32000$ and two different weightings $\alpha = 0.2$, $0.8$. The latter of these is motivated in Section \ref{sec:sub} and the former can be seen as the ``opposite'' of that. Due to the affine equivariance of the methods we used, without loss of generality, the mixing matrix $\boldsymbol{\Omega} = \textbf{I}_p$. For each of the 2000 repetitions we ran the two projection pursuit algorithms using the alternative gradients in Remark \ref{rem:grad} with the initial values based on FOBI as discussed in Remarks \ref{rem:init1} and \ref{rem:init2} and then computed the corresponding values $D(\hat{\textbf{W}})$.

The averages of $nd D^2(\hat{\textbf{W}})$ over the repetitions under the previous combinations of parameters are shown in Figure \ref{fig:sim_2}, from which three immediate remarks can be made: the choice $\alpha = 0.8$ gives uniformly better results, the symmetric projection pursuit is in all cases slightly superior to the deflation-based one and this difference gets relatively smaller and smaller by increasing the amount of noise. The latter observation is easily explained by noticing that, with respect to the expected value of the limiting distribution of $nd D^2(\hat{\textbf{W}})$, the two methods differ only by having different matrices $\boldsymbol{\Phi}_1$. As the number of noise is increased the matrix $\boldsymbol{\Phi}_2$ common to both methods gets larger while $\boldsymbol{\Phi}_1$ retains its size and its relative importance thus diminishes by the addition of noise. As a conclusion, for data with high signal-to-noise ratio (SNR) the use of the symmetric version is advocated and for data with low SNR there is not much difference between the discussed methods.

\section{SOME RELATED PROBLEMS} \label{sec:sub}

\subsection{Cluster identification}

Given that all the methods allow tuning in the form of the weighting parameter $\alpha$, a natural question is whether there exists some optimal choice of weighting for any particular choice of signal distributions. We approach this question in the context of cluster identification. Both skewness and kurtosis have been used before for similar purposes, see e.g. \cite{jones1987projection} and \cite{pena2001cluster}.

For the model, assume that $\textbf{z}$ is a mixture of two multivariate normal distributions, namely
\begin{align}\label{eq:mixture}
\textbf{z} \sim \pi \cdot {N}_p(\textbf{0}, \textbf{I}_p) + (1-\pi) \cdot {N}_p(\mu \textbf{e}_1, \textbf{I}_p),
\end{align}
standardized to have zero mean and identity covariance matrix with $\pi \in (0, 1)$, $\mu \in \mathbb{R}\backslash \{0\}$ and $\textbf{e}_1 = (1, 0,\ldots,0)^T$. The true dimension is then $d = 1$ and $\textbf{s} \in \mathbb{R}$ has a univariate bimodal distribution. Thus all signal separation functionals now consist of only one row. The current setting can also be seen as the problem of estimating the Fisher linear discrimination subspace without knowing the group membership.

\begin{figure}[t]
    \centering
    \includegraphics[width=1.0\textwidth]{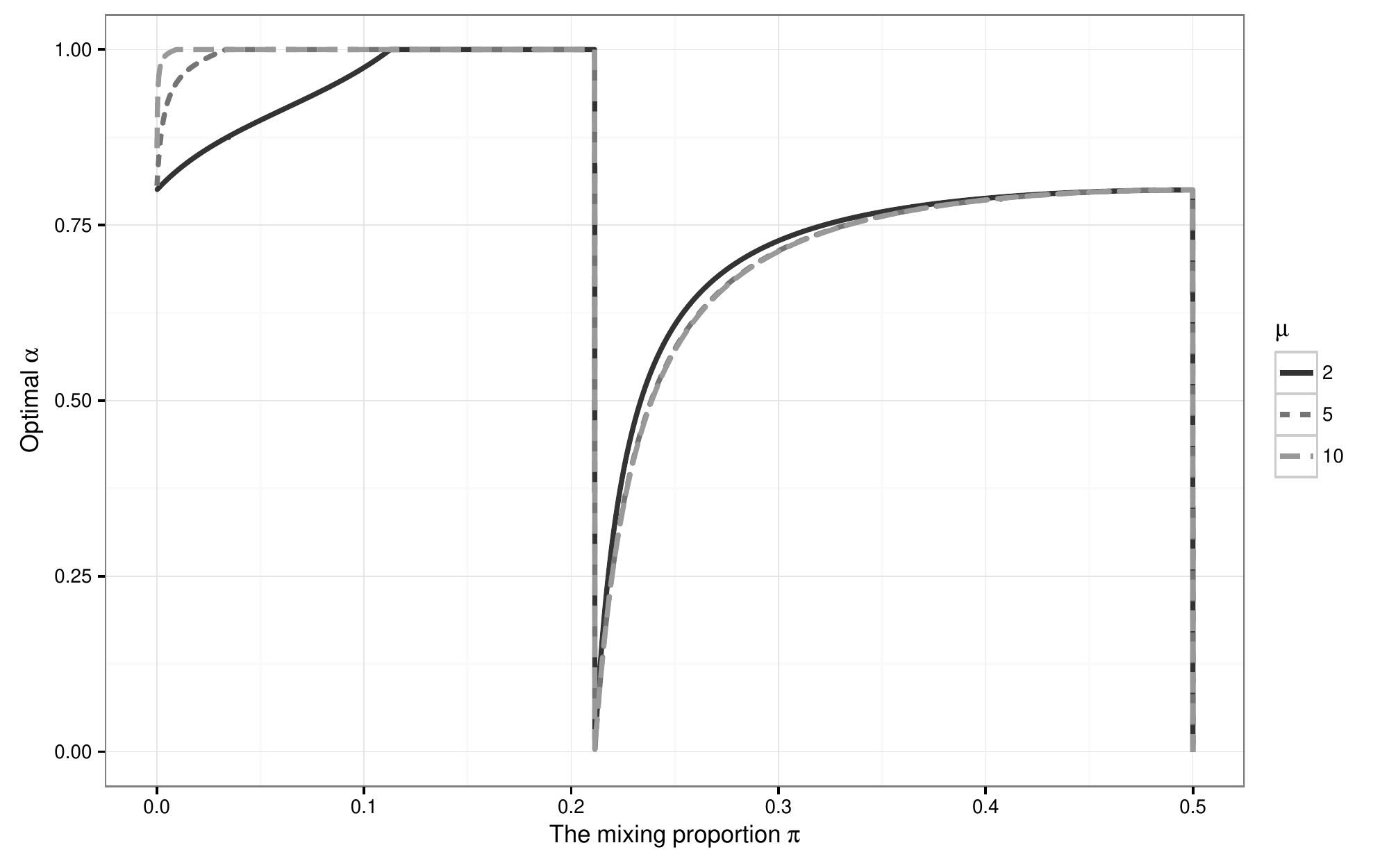}
    \caption{The optimal choices of weight $\alpha$ for different values of $\pi$ and $\mu$.}
    \label{fig:sim_discr}
\end{figure}

A motivation to use both third and fourth cumulants simultaneously in this context stems from the fact that non-trivial normal mixtures of the form \eqref{eq:mixture} can have zero skewness or zero excess kurtosis. Namely, symmetric mixtures, $\pi = 0.5$, have zero skewness and mixtures with either $\pi = \pi_0 := (3 + \sqrt{3})^{-1}$ or $\pi = 1 - \pi_0$ have zero excess kurtosis, see \cite{preston1953graphical}. This means that neither third nor fourth cumulants alone can find the latent groups for all values of $\pi$, a problem not encountered when using any non-trivial convex combination of them.

For $d=1$, the deflation-based and symmetric estimates are the same and the accuracy of the estimation of the signal is provided by $A_1$, see Theorems \ref{theo:sep_asymp} and \ref{theo:sym_asymp}. We searched the values of $\alpha$ minimizing $A_1$ for $\pi \in (0, 1)$ and the three choices of $\mu = 2, 5, 10$ and the results are shown in Figure \ref{fig:sim_discr} (we only need to consider the interval $\pi \in (0, 0.5]$ due to symmetry). First, the plot shows that the choice of $\mu$ has hardly any effect on the optimal value of $\alpha$. 
Secondly, we see the two discontinuity points discussed previously, $\pi = \pi_0$ and $\pi=0.5$. And thirdly, we observe that the curve goes to zero when approaching the point $\pi_0$ from the right. This counterintuitively suggests using only fourth cumulants even though $\kappa_1 \approx 0$ in the vicinity of $\pi_0$. However, a careful examination shows that when $\pi = \pi_0 + \epsilon$ for some small $\epsilon > 0$, the quantity $A_1$ as a function of $\alpha$ indeed has a global minimum near zero but it also satisfies $\text{lim}_{\alpha \rightarrow 0+} A_1(\alpha) = \infty$. Thus for practical purposes the global minimum is too close to zero to be of any use.

While from Figure \ref{fig:sim_discr} it is clear that no single value of $\alpha$ is the best choice in every situation, the true implication of the experiment is that one should always use both cumulants instead of just one of them. However, if such universal value is needed $\alpha = 0.8$ provides a good approximation for the optimal $\alpha$ for a large set of location differences $\mu$ and mixing proportions $\pi$. This choice is further supported by its connection to the classical Jarque-Bera test statistic for normality \citep{jarque1987test}
\begin{align*}\label{eq:jarquebera}
\frac {\gamma^2}{6} + \frac{\kappa^2}{24},
\end{align*}
where the cumulants are standardized by their asymptotic standard errors in the Gaussian case, leading to weighting equivalent to choosing $\alpha = 0.8$. It is also of course highly tempting to use a classical  test statistic for normality with well-known  asymptotic behavior  in searching non-Gaussian components. The particular value $\alpha = 0.8$ also corresponds to the effective value derived in \cite{jones1987projection}.


\subsection{Inference on unknown dimension $d$}

In most applications the true number of signals $d$ is unknown and must be estimated from the data. In the deflation-based approach the estimation of $d$ can be combined with the estimation of the signals as for each estimated direction the value of the objective function $G_\alpha(\textbf{u}_k)$, $k=1,\ldots,p$, is in fact a test statistic for sub-Gaussianity of the last $p-k+1$ components, a multivariate extension of the Jarque-Bera test \citep{jarque1987test}. The following algorithm outlines the basic idea of this testing procedure.

\begin{algorithm}[]
\SetAlgoLined
$\textbf{X} \longleftarrow$ a sample of size $n$\;
$\alpha \longleftarrow$ significance level\;
$N \longleftarrow$ number of normal samples drawn\;
\For{$k \in \{1,\ldots,p\}$}{
Estimate $\textbf{u}_k$ from $\textbf{X}$ with the deflation-based PP\;
\For{$j \in \{1,\ldots,N\}$}{
Draw a sample $\textbf{Y}_j$ of size $n$ from $N_{p-k+1}(\textbf{0}, \textbf{I}_{p-k+1})$\;
Estimate $\textbf{u}_1^j$ from $\textbf{Y}_j$ with the deflation-based PP\;
$G^j \longleftarrow G(\textbf{u}_1^j)$ computed from $\textbf{Y}$\;
}
$\rho \longleftarrow (1/N) \cdot \#(G(\textbf{u}_k) > G^j)$\;
\If{$\rho < 1 - \alpha$}{
\Return $k - 1$
}
}
\Return $p$
\caption{Estimation of the unknown dimension $d$ \label{algo_estd}}
\end{algorithm}
That is, since at the correct dimension $d$ the remaining components are Gaussian, we can test the null hypothesis $H_0: d = k$ by comparing the supremum of $G(\textbf{u})$ in $(\textbf{U}^T)^{\bot}$ to its simulated distribution for standard multivariate Gaussian distribution. Strict theoretical justification of this procedure is still missing but will be worked out  in a separate paper.

\begin{figure}[t]
    \centering
    \includegraphics[width=1\textwidth]{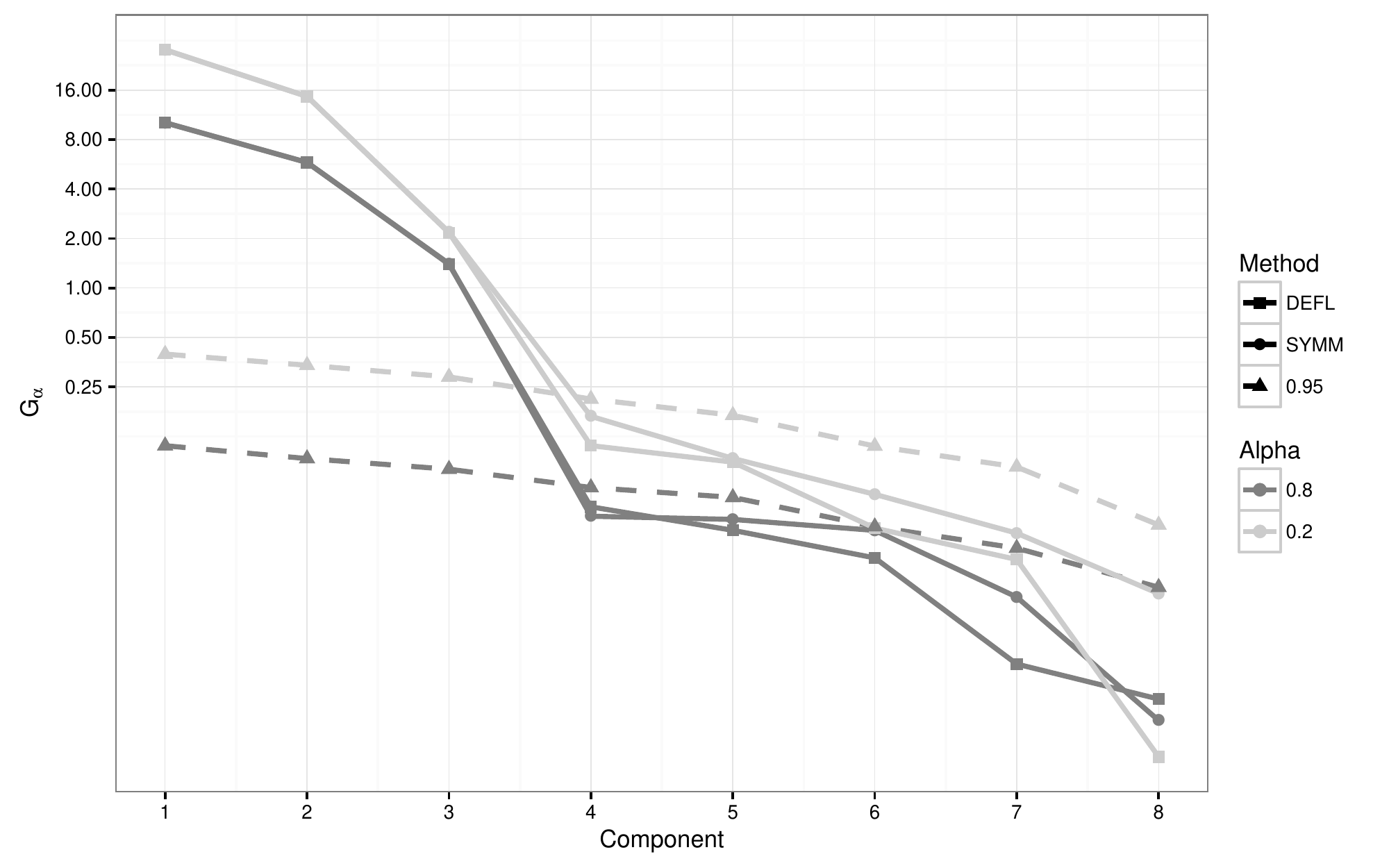}
    \caption{Screeplot for an eighth-dimensional case when using deflation-based and symmetric FastICA with two different values of $\alpha$. The dashed lines are the curves of the estimated  5 \% critical values of the tests for sub-Gaussianity.}
    \label{fig:scree}
\end{figure}

Another, na{\"i}ve way is given by using something analogous to the screeplot of PCA. That is, we first estimate full $p$ components and for each estimated row $\textbf{u}_k$, $k=1,\ldots,p$, compute the value of the objective function $G_\alpha$. Then we order the components in decreasing order with respect to these values and find a suitable cut-off point after which all remaining components have approximately zero value for the objective function. The estimate for $d$ is then the number of components with non-zero objective function value. An example of a such screeplot is given in Figure~\ref{fig:scree} where the three signals are as in the simulation setup of Section \ref{sec:simu} and the Gaussian noise dimension is five. The curves of the estimated  5 \% critical values of the tests for sub-Gaussianity based on $G_\alpha(\textbf{u}_k)$, $k=1,\ldots,p$, are provided as well (the critical values are estimated using 10000 normal samples). The figure is based on a sample of $n=2000$ and shows nicely how here the signal dimension would correctly be identified as three by all the four different approaches considered.


\subsection{A real data example}

One of the most popular applications of ICA is the demixing of vectorized  images which, while violating the assumption on i.i.d. observations, has nevertheless been shown very successful in practice, see e.g. \cite{brys2005robustification, nordhausen2008robust, HallinMehta:2015}. In fact, the algorithms then consider and analyse the marginal distribution of the intensities of an image rather than their joint  distribution. To this end, we consider the three grey-scale images available in the R-package \textit{ICS} \citep{Rics}, depicting a forest road, cat and a sheep. Each image is a matrix of size $130 \times 130$, the elements giving the intensity values of the corresponding pixels. For our setting we further simulated 12 images of same size with independent standard Gaussian noise pixels and vectorized all the images to arrive into a $130^2 \times 15$ data matrix $\textbf{X}$. Each row (``observation'') of $\textbf{X}$ contains then the intensities of a single fixed pixel across all images and each column (``variable'') the intensities of a single image. The three true images and the 15 mixed ones are shown in the Appendix.

\begin{figure}[t!]
    \centering
    \includegraphics[width=1\textwidth]{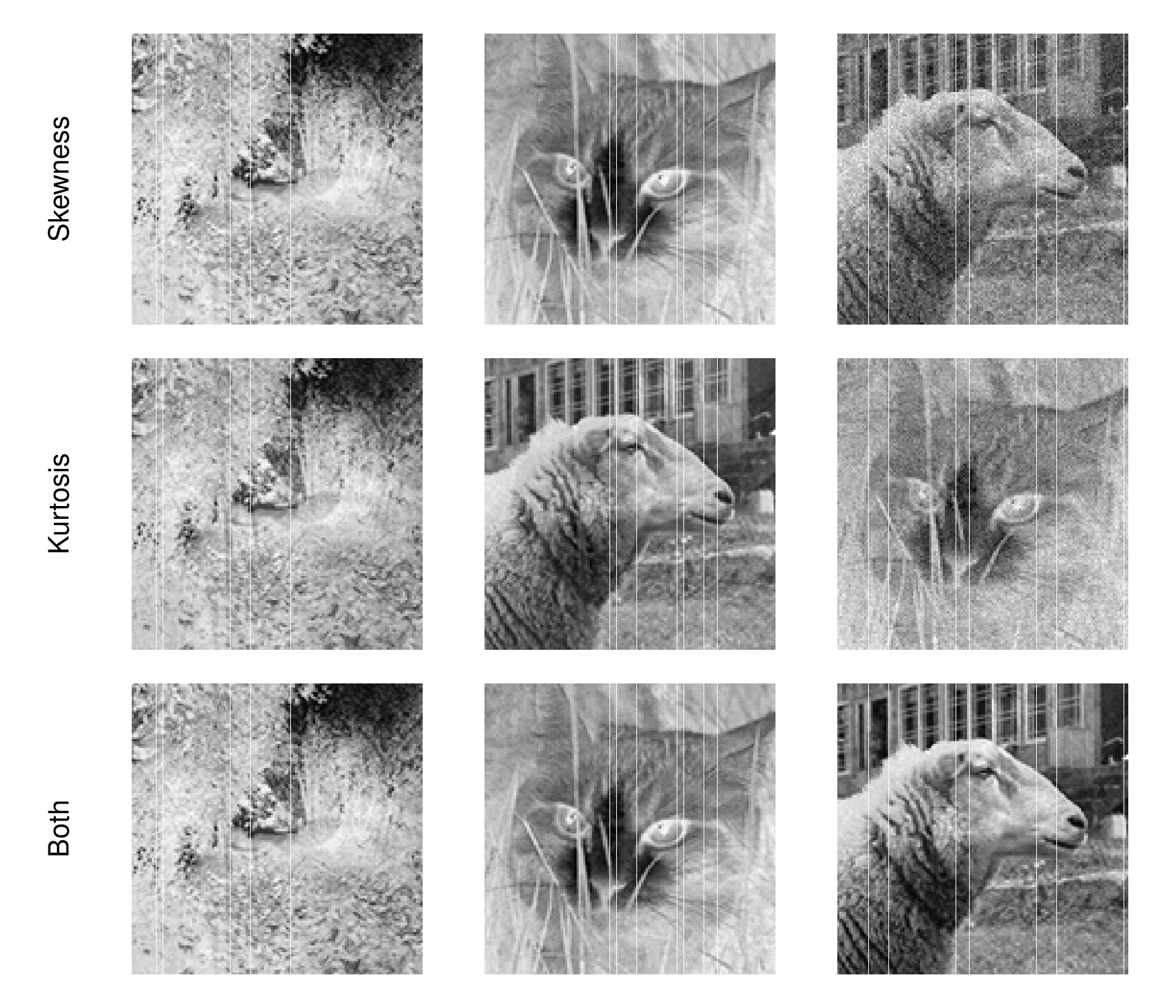}
    \caption{The three images estimated with the choices $\alpha = 0, 1, 0.8$. The leftmost image of each row is the one with the highest objective function value.}
    \label{fig:images}
\end{figure}

We next mixed the images using a random $15 \times 15$ matrix $\boldsymbol{\Omega}$ with independent standard Gaussian components as $\textbf{X} \mapsto \textbf{X} \boldsymbol{\Omega}^T$. Our objective is then to estimate from the mixture both the true images and also the number of them, $d = 3$, using deflation-based projection pursuit. Using again three different choices, $\alpha = 0, 1, 0.8$, the three estimated images with highest objective function values are shown in Figure \ref{fig:images}. The recovered images indicate that each weighting found nicely all true images but the third images found by using only a single cumulant are mixed with noise. Using a convex combination of them however found successfully all three images.

\begin{figure}[t]
    \centering
    \includegraphics[width=1\textwidth]{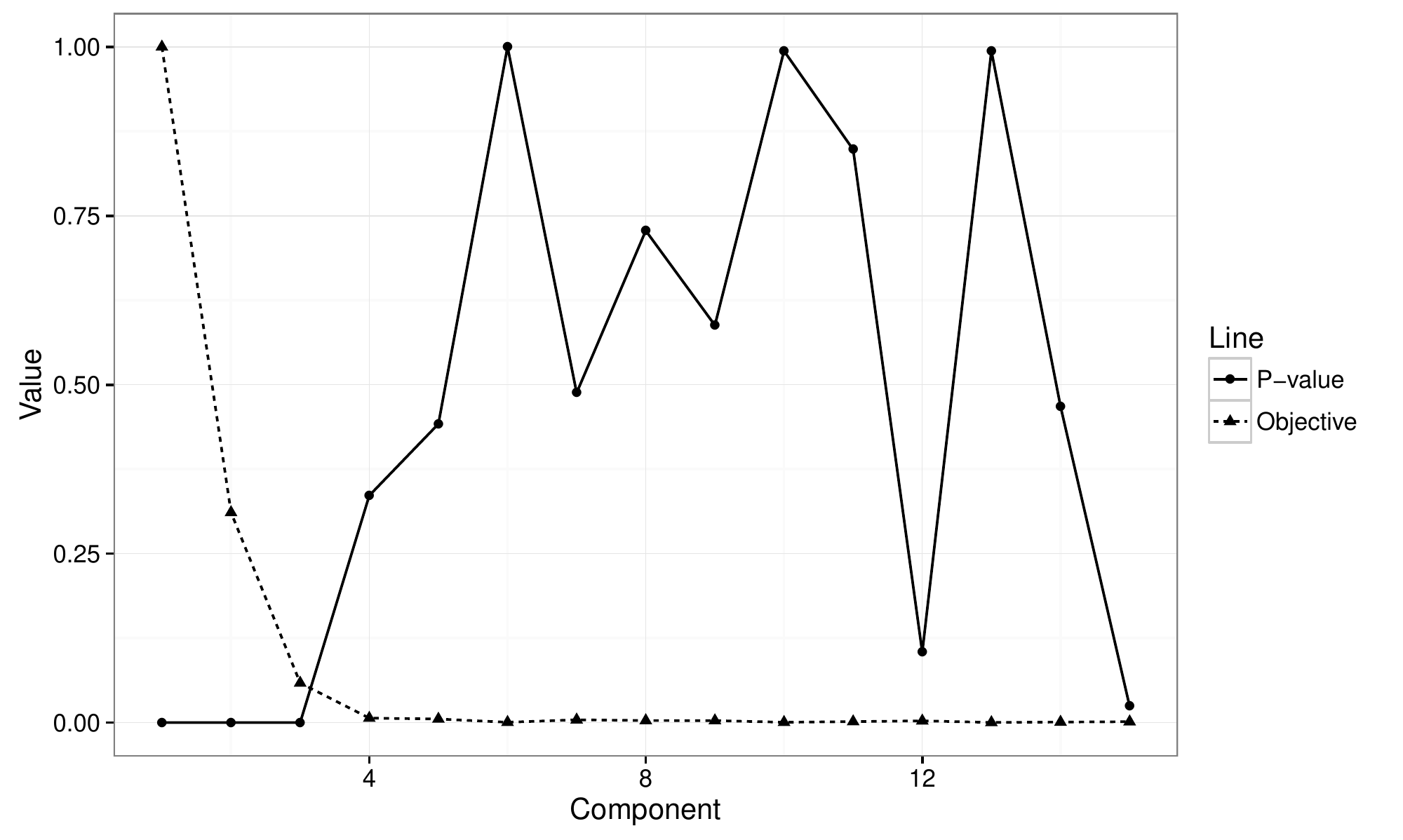}
    \caption{The resulting $p$-values and objective function values of the sequential hypothesis test for estimating the true dimension $d$.}
    \label{fig:image_d}
\end{figure}

To estimate the true number of images we used Algorithm \ref{algo_estd} with the number of normal samples $N = 500$ and the weighting $\alpha = 0.8$. The Figure \ref{fig:image_d} then shows both the resulting objective function values of each step (scaled to $(0, 1)$) and the generated $p$-values for the null hypothesis that the currently estimated component along with the remaining ones are just Gaussian noise. Both lines correctly indicate that $d=3$ is the true dimension; the objective function values are very small beginning from the fourth estimated component and at significance level $\beta = 0.05$ only the last of them differs significantly from the ones estimated from pure noise. However, the high number of tests done makes this anomaly easily attributed to randomness.

\section{DISCUSSION} \label{sec:conc}

In this paper we introduced a blind source separation model that combines the traditional ICA and NGCA models assuming that the observed random vectors are linear transformations of latent vectors having $d$ non-Gaussian signals and $p-d$ channels of Gaussian noise. To estimate the signals we proposed two different projection pursuit methods; the first one estimates the signals one-by-one and the second one simultaneously. Our projection index of choice was a convex combination of squared third and fourth cumulants yielding the advantage of finding also some signals that could be treated as noise when using only one of the cumulants.
Naturally also other \textit{nonlinearity functions} can be used in practice by replacing the objective function $G$ and its gradient $\textbf{T}$ appropriately. This can free us of making any assumptions on the existence of higher moments but, however, then the validity of Theorems \ref{theo:sep_ineq} and \ref{theo:sym_ineq} should be verified for the alternative function.

For both described methods we first gave a precise definition and a proof that the approach actually finds the correct solution. Next, an algorithm along with the affine equivariance property, estimating equations and the asymptotic behavior were given and discussed for both approaches.

Using extensive simulations, asymptotic computations and a real data example we investigated the methods' abilities to separate the signals both from the noise and from each other. The results showed that both approaches are asymptotically equally good in separating the noise from the signals but the simultaneous estimation showed slight superiority in separating individual signals coming from particular parametric families from each other. Both simulations and asymptotic theory also indicated that for data with low signal-to-noise ratio the choice of the method is largely irrelevant, the relative difference between the methods diminishing as the number of noise is increased. Furthermore, we proposed an experimental method for using the deflation-based approach to estimate the true signal dimension and showed its usefulness with both a simulation and a real data example. 

Prospective work includes: approaching the estimation problems from the viewpoint of joint cumulants, specifically using FOBI and JADE as the basis\if0\blind{ (this approach constitutes the second part of \cite{virta2015joint}, the unpublished manuscript on which the current treatise is partially based on)}\fi, giving a proper theoretical treatise of the estimation of the unknown dimension $d$ for both the deflation-based and symmetric approaches, and finally, the implementation of the procedures e.g. in the form of an R-package.

\appendix

\section{APPENDIX}

\subsection{Some notation}

We begin by providing notation used in the proofs of the asymptotical behaviors. The limiting distributions of our unmixing matrix estimates depend on the joint limiting distributions of
\begin{alignat*}{3}
\sqrt{n} \hat{s}_{kl} &= \dfrac{1}{\sqrt{n}}\sum_{i=1}^n z_{ik} z_{il}, \qquad & \\
\sqrt{n} \hat{r}_{kl} &= \dfrac{1}{\sqrt{n}}\sum_{i=1}^n (z_{ik}^2 - 1) z_{il}, \qquad & \sqrt{n} \hat{r}_{mkl} &= \frac{1}{\sqrt{n}} \sum_{i=1}^n z_{im} z_{ik} z_{il}, \\
\sqrt{n} \hat{q}_{kl} &= \frac{1}{\sqrt{n}} \sum_{i=1}^n (z_{ik}^3 - \gamma_k) z_{il} \qquad \mbox{and} \qquad & \sqrt{n} \hat{q}_{mkl} &= \frac{1}{\sqrt{n}} \sum_{i=1}^n z_{im}^2 z_{ik} z_{il}.
\end{alignat*}
The central limit theorem can be used to prove the joint limiting multivariate normality of these statistics with the variances and covariances as listed in Table \ref{tab:cov}. Finally, by $\textbf{e}_k$, $k=1,\ldots,p$, we denote the standard basis vectors of $\mathbb{R}^p$, by $\textbf{E}^{kl} := \textbf{e}_k \textbf{e}_l^T$ the matrix with one as the element $(k,l)$ and rest of the entries zero, and by $\tilde{\textbf{z}}_i := \textbf{z}_i - \bar{\textbf{z}}$ the centered identity-mixed observations.


\begin{table}[t]
\centering\caption{Covariances of the column and row entries, for $k \neq l \neq m \neq m'$.}
\label{tab:cov}
\begin{center}
\begin{tabular}{ l || c | c | c | c | c | c | c }
 & $\sqrt{n}\hat{q}_{kl}$ & $\sqrt{n}\hat{q}_{lk}$ & $\sqrt{n}\hat{r}_{kl}$ & $\sqrt{n}\hat{r}_{lk}$ & $\sqrt{n} \hat{q}_{m'kl}$ & $\sqrt{n} \hat{r}_{mkl}$ & $\sqrt{n}\hat{s}_{kl}$ \\ \hline \hline
$\sqrt{n}\hat{q}_{kl}$ & $\omega_k$ & $\beta_k \beta_l$ & $\eta_k$ & $\beta_k \gamma_l$ & $\beta_k$ & 0 & $\beta_k$ \\
$\sqrt{n}\hat{q}_{lk}$ & $-$ & $\omega_l$ & $\beta_l \gamma_k$ & $\eta_l$ & $\beta_l$ & 0 & $\beta_l$ \\
$\sqrt{n}\hat{r}_{kl}$ & $-$ & $-$ & $\nu_k$ & $\gamma_k \gamma_l$ & $\gamma_k$ & 0 & $\gamma_k$ \\
$\sqrt{n}\hat{r}_{lk}$ & $-$ & $-$ & $-$ & $\nu_l$ & $\gamma_l$ & 0 & $\gamma_l$ \\
$\sqrt{n}\hat{q}_{m'kl}$ & $-$ & $-$ & $-$ & $-$ & $\beta_m$ & 0 & 1 \\
$\sqrt{n}\hat{r}_{mkl}$ & $-$ & $-$ & $-$ & $-$ & $-$ & 1 & 0  \\
$\sqrt{n}\hat{s}_{kl}$ & $-$ & $-$ & $-$ & $-$ & $-$ & $-$ & 1 \\
\end{tabular}
\end{center}
\end{table}

\subsection{Proofs of Section \ref{sec:mod_ssf}}

\begin{proof}[Proof of Corollary \ref{cor:identify}]
The claim is equivalent to saying that if we have $\boldsymbol{\Omega}^*_1 := \boldsymbol{\Omega}_1 \textbf{A}^{-1}$ and $\textbf{s}^* := \textbf{A} \textbf{s}$ where $\textbf{A} = (a_{ij}) \in \mathbb{R}^{d \times d}$ is invertible and $\textbf{s}^*$ has standardized independent components, then the matrix $\textbf{A}$ must be of form $\textbf{D} \textbf{P}$ where $\textbf{D} \in \mathcal{D}$ and $\textbf{P} \in \mathcal{P}$.

We will first show that 
no column of $\textbf{A}$ can have more than one non-zero element. To see this, take any two distinct components $s_i^*$ and $s_j^*$, $i \neq j$, of $\textbf{s}^*$. As $s_i^*$ and $s_j^*$ are independent under our assumptions so are clearly the random variables
\[\sum_{k:a_{ik} a_{jk} \neq 0} a_{ik} s_k \quad \mbox{and} \quad \sum_{k:a_{ik} a_{jk} \neq 0} a_{jk} s_k,\]
obtained by ignoring the independent components not present in both sums. Now, by Skitovich-Darmois theorem all summands in both sums must be Gaussian but our assumption on their non-Gaussianity contradicts this.  The only way both cases can hold simultaneously is when we are actually summing over an empty set, and thus $\{ k:a_{ik} a_{jk} \neq 0 \} = \emptyset$, $\forall i \neq j$. Therefore, each column of $\textbf{A}$ can have at most one non-zero element.

Secondly, as $\textbf{A}$ is assumed invertible all of its rows must have at least one non-zero element, and $\textbf{A}$ being a square matrix the only way both this and the previous condition can be satisfied is when $\textbf{A} \in \mathcal{C}$. Furthermore, as the components of $\textbf{s}^*$ are assumed to be standardized, the non-zero elements of $\textbf{A}$ have to be $\pm 1$ and thus $\textbf{A} = \textbf{D} \textbf{P}$.
\end{proof}

\subsection{Proofs of Section \ref{sec:sep}}

\begin{proof}[Proof of Theorem \ref{theo:sep_ineq}]
Theorem \ref{theo:icm_rotate} says that $\textbf{s} = \textbf{U} \textbf{x}_{st}$ but actually a stronger statement can be made: Theorem 2 in \cite{miettinen2014fourth} implies that the whole vector of independent components satisfies $\textbf{z} = \textbf{V} \textbf{x}_{st}$ for some orthogonal matrix $\textbf{V} = (\textbf{v}_1, \ldots, \textbf{v}_p)^T$ where naturally $(\textbf{v}_1, \ldots, \textbf{v}_d)^T = \textbf{U}$.  Using this it is easy to check that the claim of the theorem is equivalent to showing that for any fixed $k \geq 1$ we have 
\[
\alpha \gamma^2(\textbf{u}^T \textbf{z}) + (1 - \alpha) \kappa^2(\textbf{u}^T \textbf{z}) \leq \alpha \gamma_k^2 + (1-\alpha) \kappa_k^2,
\]
for all $\textbf{u} \in \mathbb{R}^p$ satisfying $\textbf{u}^T \textbf{u} = 1$ and $\textbf{u}^T \textbf{e}_l = 0$ for all standard basis vectors $\textbf{e}_l$, $l=1,\ldots,k-1$.

To prove this equivalent claim note first that due to the additivity and homogeneity of cumulants the following two identities hold under the model.
\[\gamma(\textbf{u}^T\textbf{z}) = \sum_{l=1}^p u_l^3\gamma_l \quad \text{and} \quad \kappa(\textbf{u}^T\textbf{z}) = \sum_{l=1}^p u_l^4\kappa_l. \]
The above assumptions on orthogonality further imply that $u_l = 0$, $l = 1,\ldots,k-1$. Then, by using the Cauchy-Schwarz inequality and the fact that $u_l^2 \leq 1$, $l = 1,\ldots,p$, we have
\begin{align*}
\alpha \gamma^2(\textbf{u}^T\textbf{z}) + (1-\alpha) \kappa^2(\textbf{u}^T\textbf{z}) \,
\leq & \alpha \left( \sum_{l=1}^p u_l^2 \right) \sum_{l=1}^p u_l^4 \gamma_l^2 + (1-\alpha) \left( \sum_{l=1}^p u_l^2 \right) \sum_{l=1}^p u_l^6 \kappa_l^2 \\
= & \alpha \sum_{l=1}^p u_l^4 \gamma_l^2 + (1-\alpha)\sum_{l=1}^p u_l^6 \kappa_l^2 \\
\leq & \sum_{l=k}^p u_l^2 \left( \alpha \gamma_l^2 + (1-\alpha) \kappa_l^2 \right) \\
\leq & \sum_{l=k}^p u_l^2 \left(\alpha \gamma_k^2 + (1-\alpha) \kappa_k^2 \right) \\
\leq & \alpha \gamma_k^2 + (1-\alpha) \kappa_k^2,
\end{align*}
where the second-to-last step holds because we without loss of generality assumed that the signals be ordered according to the values $\alpha \gamma^2 + (1-\alpha) \kappa^2$.
\end{proof}

\begin{proof}[The derivation of Algorithm \ref{algo:sep}]
The Lagrangian of the maximization problem involving $\textbf{u}_k$ has the form
\begin{align*}
 L(\textbf{u}_k, \boldsymbol{\lambda}_k) &= \alpha \left( E\left[(\textbf{u}_k^T \textbf{x}_{st})^3 \right] \right)^2 + (1 - \alpha) \left( E\left[(\textbf{u}_k^T \textbf{x}_{st})^4 \right] - 3 \right)^2 \\
 &- \sum_{j=1}^{k-1} \lambda_{kj} \textbf{u}_j^T \textbf{u}_k - \lambda_{kk} (\textbf{u}_k^T \textbf{u}_k - 1).
\end{align*}
First differentiating w.r.t. $\textbf{u}_k$ and the Lagrangian multipliers and then solving for the Lagrangian multipliers (by multiplying by $\textbf{u}_j^T$, $j=1,\ldots,k$, from the left) and substituting them back in yields the following estimating equation for the $k$th row $\textbf{u}_k$.
\[
\left(\textbf{I}_p- \sum_{j=1}^k \textbf{u}_j \textbf{u}_j^T \right)  \textbf{T}_\alpha(\textbf{u}_k)=\textbf{0}.
\]
This implies that the solution satisfies $\textbf{u}_k \propto \left(\textbf{I}_p - \sum_{j=1}^{k-1} \textbf{u}_j \textbf{u}_j^T \right) \textbf{T}(\textbf{u}_k)$ yielding Algorithm \ref{algo:sep}.
\end{proof}

To prove Theorem \ref{theo:sep_asymp} we first present the following lemma.

\begin{aplemma}\label{lem:sep_asymp}
Let $\textbf{x}_1,\ldots,\textbf{x}_n$ be a random sample from the independent component model in \eqref{eq:icm_icmodel} with $\boldsymbol{\Omega} = \textbf{I}_p$.
Assume that the eighth moments exist and that $\min_{1\le j \le d} \{\alpha\gamma_j^2+(1-\alpha)\kappa_j^2 \}> 0$. Then there exists a sequence of solutions $\hat{\textbf{W}}$ such that $\hat{\textbf{W}}\rightarrow_P (\textbf{I}_d, \textbf{0})$ and
\begin{align*}
\sqrt{n} \hat{w}_{kl} &= -\sqrt{n} \hat{w}_{lk} - \sqrt{n} \hat{s}_{kl} + o_P(1), & l < k, \\
\sqrt{n} (\hat{w}_{kk} - 1) &= -\frac{1}{2} \sqrt{n} (\hat{s}_{kk} - 1) + o_P(1),  \\
\sqrt{n} \hat{w}_{kl} &= \frac{3 \alpha \sqrt{n} \hat{\psi}_{1kl} + 4 (1 - \alpha) \sqrt{n} \hat{\psi}_{2kl}}{3 \alpha \gamma_k^2 + 4 (1 - \alpha) \kappa_k^2} + o_P(1), & l > k,
\end{align*}
where $\hat{\psi}_{1kl} = \gamma_k \hat{r}_{kl} - \gamma_k^2 \hat{s}_{kl}$ and $\hat{\psi}_{2kl} = \kappa_k \hat{q}_{kl} - \kappa_k \beta_k \hat{s}_{kl}$.
\end{aplemma}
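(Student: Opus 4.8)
The plan is to exploit the factorisation $\hat{\textbf{W}} = \hat{\textbf{U}}\hat{\boldsymbol{\Sigma}}^{-1/2}$ and to linearise the rotation part and the standardisation part separately around their population limits $(\textbf{I}_d,\textbf{0})$ and $\textbf{I}_p$. First I would extend the estimated $d\times p$ matrix $\hat{\textbf{U}}$ to a full $p\times p$ orthogonal matrix $\hat{\textbf{V}} = (\hat{\textbf{u}}_1,\ldots,\hat{\textbf{u}}_p)^T$, exactly as in the proof of Theorem \ref{theo:sep_ineq}, and establish that a consistent root exists: since Theorem \ref{theo:sep_ineq} identifies the population maximiser and the empirical criterion converges uniformly under the stated moment conditions, a standard argmax / Z-estimator argument yields a sequence $\hat{\textbf{W}} \rightarrow_P (\textbf{I}_d,\textbf{0})$, and the subsequent linearisation will simultaneously deliver the $\sqrt{n}$-rate. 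Writing $\hat{\boldsymbol{\Sigma}}^{-1/2} = \textbf{I}_p - \tfrac12(\hat{\boldsymbol{\Sigma}}-\textbf{I}_p) + o_P(n^{-1/2})$ and noting that $\sqrt{n}(\hat{\boldsymbol{\Sigma}}-\textbf{I}_p)_{kl} = \sqrt{n}\hat{s}_{kl} + o_P(1)$ (the centering of $\tilde{\textbf{z}}_i$ contributing only higher-order terms), multiplying out the product gives $\hat{w}_{kl} = \hat{v}_{kl} - \tfrac12\hat{s}_{kl} + o_P(n^{-1/2})$ for $k\neq l$ and $\hat{w}_{kk} = \hat{v}_{kk} - \tfrac12(\hat{s}_{kk}-1) + o_P(n^{-1/2})$, where $\hat{v}_{kl}$ denote the entries of $\hat{\textbf{V}}$.

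Two of the three cases then follow with little extra work. For the diagonal case, orthonormality of the rows of $\hat{\textbf{V}}$ forces $\hat{v}_{kk} = 1 - O_P(n^{-1})$, so the rotation contributes nothing at first order and $\sqrt{n}(\hat{w}_{kk}-1) = -\tfrac12\sqrt{n}(\hat{s}_{kk}-1) + o_P(1)$. For the lower-triangular case, the constraint $\hat{\textbf{u}}_k^T\hat{\textbf{u}}_l = \delta_{kl}$ gives the skew-symmetry relation $\hat{v}_{kl} + \hat{v}_{lk} = O_P(n^{-1})$; adding the off-diagonal expansions of $\hat{w}_{kl}$ and $\hat{w}_{lk}$ and using $\hat{s}_{kl} = \hat{s}_{lk}$ then yields $\sqrt{n}\hat{w}_{kl} = -\sqrt{n}\hat{w}_{lk} - \sqrt{n}\hat{s}_{kl} + o_P(1)$, which is the stated identity (with $\hat{w}_{lk}$ an above-diagonal entry, handled by the third case).

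The hard part will be the upper-triangular case $l > k$, which is where the projection index genuinely enters. The idea is to project the estimating equation $(\textbf{I}_p - \sum_{j=1}^k \hat{\textbf{u}}_j\hat{\textbf{u}}_j^T)\textbf{T}_\alpha(\hat{\textbf{u}}_k) = \textbf{0}$ onto $\textbf{e}_l$ and to linearise. Three distinct first-order contributions must be tracked and recombined: (i) the standardisation corrections inside the sample moments $n^{-1}\sum_i \hat{y}_{ik}^2 \hat{y}_{il}$ and $n^{-1}\sum_i \hat{y}_{ik}^3 \hat{y}_{il}$, where $\hat{\textbf{y}}_i = \hat{\boldsymbol{\Sigma}}^{-1/2}\tilde{\textbf{z}}_i$, which produce the leading terms $\gamma_k\hat{r}_{kl}$ and $\kappa_k\hat{q}_{kl}$ together with $\hat{s}_{kl}$-corrections carrying the coefficients $E(z_k^3) = \gamma_k$, $E(z_k^4) = \beta_k$ and $E(z_k^2 z_l^2) = 1$; (ii) the perturbation of the argument $\hat{\textbf{u}}_k$ away from $\textbf{e}_k$, whose only surviving contribution is a term $+3\hat{v}_{kl}$ arising from the cubic moment through $E(z_k^2 z_l^2) = 1$; and (iii) the projection term $(\hat{\textbf{u}}_k^T\textbf{T}_\alpha(\hat{\textbf{u}}_k))\hat{v}_{kl}$, whose coefficient tends to $3\alpha\gamma_k^2 + 4(1-\alpha)\kappa_k\beta_k$. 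The delicate point is the bookkeeping: collecting the $\hat{v}_{kl}$-terms from (ii) and (iii) converts the coefficient $4(1-\alpha)\kappa_k\beta_k$ into the denominator $3\alpha\gamma_k^2 + 4(1-\alpha)\kappa_k^2$ via $\beta_k = \kappa_k + 3$, after which substituting $\hat{v}_{kl} = \hat{w}_{kl} + \tfrac12\hat{s}_{kl}$ and gathering the remaining $\hat{s}_{kl}$-terms assembles exactly $3\alpha\hat{\psi}_{1kl} + 4(1-\alpha)\hat{\psi}_{2kl}$ in the numerator. Checking that all the omitted products are genuinely $o_P(n^{-1/2})$, which is where the eighth-moment assumption is needed to control the relevant variances, then completes the argument.
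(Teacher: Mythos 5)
Your proposal is correct and takes essentially the same approach as the paper: both arguments establish consistency via the uniform law of large numbers and then linearise the estimating equations $\left(\textbf{I}_p - \sum_{j \le k} \hat{\textbf{u}}_j \hat{\textbf{u}}_j^T\right)\textbf{T}_\alpha(\hat{\textbf{u}}_k) = \textbf{0}$ around the population solution, tracking exactly the three first-order contributions you list, with your conversion $\beta_k = \kappa_k + 3$ and the $\hat{s}_{kl}$-bookkeeping assembling the stated numerator and denominator precisely as the paper's element-wise equations do. The only cosmetic difference is that you expand $\hat{\textbf{U}}$ and $\hat{\boldsymbol{\Sigma}}^{-1/2}$ separately and obtain the diagonal and lower-triangular cases from the orthonormality of $\hat{\textbf{U}}$, whereas the paper works directly in the parametrisation $\hat{\textbf{W}} = \hat{\textbf{U}}\hat{\textbf{S}}^{-1/2}$ and reads all three cases off the linearised gradient identities, which encode the same constraint information.
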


\begin{proof}[Proof of Lemma \ref{lem:sep_asymp}]

The consistency of the estimator can be proven similarly as in \citet{miettinen2014fourth}: notice first that the population and sample objective functions, which now have the general forms
\[D(\textbf{u}) = \sum_{j=1}^J w_j \left( E\left[G_j(\textbf{u}^T \textbf{z})\right] \right)^2 \quad \text{and} \quad D_n(\textbf{u}) = \sum_{j=1}^J w_j \left( \frac{1}{n} \sum_{i=1}^n G_j(\textbf{u}^T \textbf{x}_{st,i}) \right)^2,\]
are continuous for our choices of functions $G_j$. Then, the  uniform law of large numbers in conjunction with the compactness of the unit sphere guarantees that $\text{sup}_{\textbf{u}^T \textbf{u} = 1} | D(\textbf{u}) - D_n(\textbf{u}) | \rightarrow_P 0$. Thus for each $\hat{\textbf{u}}_k$, $k=1,\ldots,d$, we can choose a sequence of solutions converging to the population maximums, $\hat{\textbf{u}}_k \rightarrow_P \textbf{e}_k$, implying then that $\hat{\textbf{w}}_k = \hat{\textbf{S}}{}^{-1/2} \hat{\textbf{u}}_k \rightarrow_P \textbf{e}_k$.

For the asymptotic behavior of $\hat{\textbf{W}} = (\hat{\textbf{w}}_1, \ldots, \hat{\textbf{w}}_d)$ we require in the current proof and the proof of Lemma \ref{lem:sym_asymp} the following estimators which assume a given estimate $\hat{\textbf{W}}$.
\begin{alignat*}{4}
& \hat{h}_{3k} = \frac{1}{n} \sum_{i=1}^n (\hat{\textbf{w}}_k^T \tilde{\textbf{z}}_i)^3, \qquad & \hat{\textbf{t}}_{3k} &= \frac{1}{n} \sum_{i=1}^n (\hat{\textbf{w}}_k^T \tilde{\textbf{z}}_i)^2 \tilde{\textbf{z}}_i, \\
& \hat{h}_{4k} = \frac{1}{n} \sum_{i=1}^n (\hat{\textbf{w}}_k^T \tilde{\textbf{z}}_i)^4 - 3 \qquad \mbox{and} \qquad & \hat{\textbf{t}}_{4k} &= \frac{1}{n} \sum_{i=1}^n (\hat{\textbf{w}}_k^T \tilde{\textbf{z}}_i)^3 \tilde{\textbf{z}}_i,
\end{alignat*}
satisfying $\hat{h}_{3k} \rightarrow_P \gamma_k, \hat{h}_{4k} \rightarrow_P \kappa_k, \hat{\textbf{t}}_{3k} \rightarrow_P \gamma_k \textbf{e}_k$ and $\hat{\textbf{t}}_{4k} \rightarrow_P \beta_k \textbf{e}_k$. In terms of $\hat{\textbf{W}} = \hat{\textbf{U}} \hat{\textbf{S}}{}^{-1/2}$ the estimating equations for $\hat{\textbf{w}}_k$ have the form
\begin{align} \label{eq:sep_sample}
\hat{\textbf{T}}_{\alpha k} = \hat{\textbf{S}} \left( \sum_{j=1}^k \hat{\textbf{w}}_j \hat{\textbf{w}}_j^T \right) \hat{\textbf{T}}_{\alpha k},
\end{align}
where $\hat{\textbf{T}}_{\alpha k} := 3 \alpha \hat{h}_{3k} \hat{\textbf{t}}_{3k} + 4 (1 - \alpha) \hat{h}_{4k}  \hat{\textbf{t}}_{4k} \rightarrow_P \lambda_k \textbf{e}_k$ and $\lambda_k := 3 \alpha \gamma_k^2 + 4 (1 - \alpha) \kappa_k \beta_k$. Then, using the equation after (4) in \cite{nordhausen2011deflation} we get the identity
\begin{align*}
\textbf{J}_k \sqrt{n} (\hat{\textbf{T}}_{\alpha k} - \lambda_k \textbf{e}_k) =& \lambda_k [ \sqrt{n} (\hat{\textbf{S}} - \textbf{I}_p) \textbf{e}_k + \sum_{j=1}^k \textbf{E}^{jk} \sqrt{n} ( \hat{\textbf{w}}_j - \textbf{e}_j) \numberthis \label{eq:app_nord5} \\
+& \sqrt{n}(\hat{\textbf{w}}_k - \textbf{e}_k) ] + o_P(1),
\end{align*}
where $\textbf{J}_k = \sum_{j>k} \textbf{E}^{jj}$. Next, using (3) from \cite{nordhausen2011deflation} separately for $\hat{\textbf{t}}_{3k}$ and $\hat{\textbf{t}}_{4k}$ gives the following two identities.
\begin{align} \label{eq:app_t3k}
\sqrt{n} (\hat{\textbf{t}}_{3k} - \gamma_k \textbf{e}_k) &= \sqrt{n} \hat{\textbf{r}}_k - 2 \textbf{E}^{kk} \sqrt{n} \bar{\textbf{z}} + 2 \gamma_k \textbf{E}^{kk} \sqrt{n} (\hat{\textbf{w}}_k - \textbf{e}_k) + o_P(1),
\end{align}
and
\begin{align*} 
\sqrt{n} (\hat{\textbf{t}}_{4k} - \beta_k \textbf{e}_k) &= \sqrt{n} \hat{\textbf{q}}_k - 3 \gamma_k \textbf{E}^{kk} \sqrt{n} \bar{\textbf{z}} \numberthis\label{eq:app_t4k}  \\
&+ 3 (\textbf{I}_p + (\beta_k - 1) \textbf{E}^{kk}) \sqrt{n} (\hat{\textbf{w}}_k - \textbf{e}_k) + o_P(1),
\end{align*}
where $\hat{\textbf{r}}_k = (1/n) \sum_{i=1}^n (z_{ik}^2 - 1) \textbf{z}_i$ and $\hat{\textbf{q}}_k = (1/n) \sum_{i=1}^n (z_{ik}^3 - \gamma_k) \textbf{z}_i$.

Using \eqref{eq:app_t3k} and \eqref{eq:app_t4k} together with the identity obtainable with Slutsky's theorem that $\sqrt{n} (\hat{h}_{3k} \hat{\textbf{t}}_{3k} - \gamma_k^2 \textbf{e}_k) = \gamma_k \sqrt{n} ( \hat{\textbf{t}}_{3k} - \gamma_k \textbf{e}_k) + \gamma_k \sqrt{n} (\hat{h}_{3k} - \gamma_k) \textbf{e}_k + o_P(1)$ (and the analogy for $\hat{\textbf{t}}_{4k}$) we get an alternative expression for $\sqrt{n} (\hat{\textbf{T}}_{\alpha k} - \lambda_k \textbf{e}_k)$ which can be substituted into \eqref{eq:app_nord5}. Inspecting the result element-wise ($l=1,\ldots,p$) then yields the following three equations from which the result follows by noting that $\hat{\textbf{S}}$ is symmetric.
\begin{align*}
0 &= \sqrt{n} \hat{s}_{lk} + \sqrt{n} \hat{w}_{lk} + \sqrt{n} \hat{w}_{kl} + o_P(1), \quad &l < k, \\
0 &= \sqrt{n} (\hat{s}_{kk} - 1) + 2 \sqrt{n} (\hat{w}_{kk} - 1) + o_P(1), \quad &l = k,
\end{align*}
and
\begin{align*}
\quad & 3 \alpha \gamma_k \sqrt{n} \hat{r}_{kl} + 4 (1 - \alpha) \kappa_k ( \sqrt{n} \hat{q}_{kl} + 3 \sqrt{n} \hat{w}_{kl})\\
= \quad & (3 \alpha \gamma_k^2 + 4 (1 - \alpha) \kappa_k \beta_k)(\sqrt{n} \hat{s}_{lk} + \sqrt{n} \hat{w}_{kl}) + o_P(1), \quad l > k.
\end{align*}
\end{proof}

\begin{proof}[Proof of Theorem \ref{theo:sep_asymp}]
The expressions of Theorem \ref{theo:sep_asymp} follow straightforwardly by computing the variances of the expressions given in Lemma \ref{lem:sep_asymp} using Table \ref{tab:cov} given in the beginning of this supplementary material.
\end{proof}

\subsection{Proofs of Section \ref{sec:sym}}

For the proof of Theorem \ref{theo:sym_ineq} we first present and prove the following lemma.

\begin{aplemma}\label{lem:app_ineq}
Let the matrix $\textbf{V} \in \mathbb{R}^{d \times p}$ have orthonormal rows, $\textbf{b} \in \mathbb{R}^p$ and $r \in \mathbb{N}, \, r \geq 2$. Then
\[\sum_{k=1}^d \left( \sum_{l=1}^p v_{kl}^r  b_l \right)^2 \leq \sum_{l=1}^p b_l^2. \]
\end{aplemma}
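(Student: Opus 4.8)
The plan is to reduce the whole estimate to two elementary facts about a matrix with orthonormal rows: first, that each row of $\textbf{V}$ is a unit vector, $\sum_{l=1}^p v_{kl}^2 = 1$, and second, that the column sums of squares are bounded by one, $\sum_{k=1}^d v_{kl}^2 \le 1$. The first is immediate from $\textbf{V}\textbf{V}^T = \textbf{I}_d$. For the second I would note that $\textbf{V}^T\textbf{V}$ is symmetric and idempotent (using $\textbf{V}\textbf{V}^T = \textbf{I}_d$), hence an orthogonal projection, so its diagonal entry $(\textbf{V}^T\textbf{V})_{ll} = \sum_{k=1}^d v_{kl}^2$ lies in $[0,1]$. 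In particular this also forces $|v_{kl}| \le 1$ for all $k,l$.

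With these facts in hand, the main step is a single application of Cauchy--Schwarz to the inner sum, splitting the exponent as $v_{kl}^r = v_{kl}\cdot v_{kl}^{r-1}$:
\[
\left( \sum_{l=1}^p v_{kl}^r b_l \right)^2 \le \left( \sum_{l=1}^p v_{kl}^2 \right)\left( \sum_{l=1}^p v_{kl}^{2r-2} b_l^2 \right) = \sum_{l=1}^p v_{kl}^{2r-2} b_l^2,
\]
where the final equality uses the unit-norm property of the $k$th row. Summing over $k$ and interchanging the order of summation bounds the left-hand side of the lemma by $\sum_{l=1}^p b_l^2 \sum_{k=1}^d v_{kl}^{2r-2}$.

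It then remains to show $\sum_{k=1}^d v_{kl}^{2r-2} \le 1$ for each fixed $l$. Since $r \ge 2$ we have $2r-2 \ge 2$, and because $v_{kl}^2 \le 1$ we may reduce the power via $v_{kl}^{2r-2} = (v_{kl}^2)^{r-1} \le v_{kl}^2$; summing over $k$ and invoking the column bound gives $\sum_{k=1}^d v_{kl}^{2r-2} \le \sum_{k=1}^d v_{kl}^2 \le 1$, which finishes the proof.

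The only genuine obstacle is the column-wise bound $\sum_{k=1}^d v_{kl}^2 \le 1$: unlike the row normalization, it is not part of the defining relation $\textbf{V}\textbf{V}^T = \textbf{I}_d$ and has to be extracted from the projection structure of $\textbf{V}^T\textbf{V}$ (equivalently, from the fact that $\textbf{V}$ can be completed to a $p\times p$ orthogonal matrix). Everything else is a routine Cauchy--Schwarz estimate combined with the trivial power reduction $|v_{kl}|\le 1$.
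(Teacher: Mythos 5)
Your proof is correct and follows essentially the same route as the paper's: the identical Cauchy--Schwarz split $v_{kl}^r = v_{kl}\cdot v_{kl}^{r-1}$ combined with the row normalization, the power reduction $v_{kl}^{2r-2}\leq v_{kl}^2$, and the column bound $\sum_{k=1}^d v_{kl}^2 \leq 1$. The only difference is that you explicitly justify the column bound via the projection structure of $\textbf{V}^T\textbf{V}$, which the paper leaves implicit.
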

\begin{proof}[Proof of Lemma \ref{lem:app_ineq}]
Utilize first the Cauchy-Schwarz inequality:
\[ \sum_{k=1}^d \left( \sum_{l=1}^p v_{kl}^r  b_l \right)^2 = \sum_{k=1}^d \left( \sum_{l=1}^p (v_{kl}) (v_{kl}^{r-1} b_l) \right)^2 \leq \sum_{k=1}^d \sum_{l=1}^p v_{kl}^{2r-2} b_l^2. \]
Then observing that $v_{kl}^{2r-2} = v_{kl}^2 v_{kl}^{2r-4} \leq v_{kl}^2$ and $\sum_{k=1}^d v_{kl}^2 \leq 1$ gives the desired result.
\end{proof}

\begin{proof}[Proof of Theorem \ref{theo:sym_ineq}]
As in the proof of Theorem \ref{theo:sep_ineq} we can write the claim in the equivalent form:
\[
\sum_{k=1}^d \left( \alpha \gamma^2(\textbf{v}_k^T \textbf{z}) + (1 - \alpha) \kappa^2(\textbf{v}_k^T \textbf{z}) \right) \leq \sum_{k=1}^d \left( \alpha \gamma_k^2 + (1-\alpha) \kappa_k^2 \right),
\]
for all $\textbf{V} = (\textbf{v}_1,\ldots,\textbf{v}_d)^T \in \mathcal{U}^{d \times p}$ with orthonormal rows (note that the matrix $\textbf{V}$ now differs from the $\textbf{V}$ in the original formulation).

The above formulation is then easily proved by first expanding the left-hand side under the assumptions of our model in \eqref{eq:icm_icmodel} to yield (see also the proof of Theorem \ref{theo:sep_ineq})
\[
\alpha \sum_{k=1}^d \left( \sum_{l=1}^p v_{kl}^3  \gamma_l \right)^2 + (1 - \alpha) \sum_{k=1}^d \left( \sum_{l=1}^p v_{kl}^4  \kappa_l \right)^2.
\]
Then applying Lemma \ref{lem:app_ineq} and substituting $\gamma_k = \kappa_k = 0$ for the noise components, $k = d+1,\ldots,p$, gives the desired result.
\end{proof}

\begin{proof}[The derivation of Algorithm \ref{algo:sym1}]
The Lagrangian of the optimization problem has the form
\begin{align*}
 L(\textbf{U}, \boldsymbol{\Lambda}) &= \alpha \sum_{k=1}^d \left( E\left[(\textbf{u}_k^T \textbf{x}_{st})^3 \right] \right)^2 + (1 - \alpha) \sum_{k=1}^d \left( E\left[(\textbf{u}_k^T \textbf{x}_{st})^4 \right] - 3 \right)^2 \\
 &- \sum_{k=1}^{d-1} \sum_{l=k+1}^d \lambda_{kl} \textbf{u}_k^T \textbf{u}_l - \sum_{k=1}^d \lambda_{kk} (\textbf{u}_k^T \textbf{u}_k - 1).
\end{align*}
First differentiating w.r.t. $\textbf{U}$ and the Lagrangian multipliers in $\boldsymbol{\Lambda}$ and solving for the multipliers as in the derivation of Algorithm \ref{algo:sep} we notice that the multipliers have two solutions that must be equal, thus yielding the estimating equations
\[
\textbf{u}_l^T \textbf{T}_\alpha(\textbf{u}_k) =\textbf{u}_k^T \textbf{T}_\alpha(\textbf{u}_l), \quad \forall k,l=1,\ldots,d,
\]
and $\textbf{U}\textbf{U}^T = \textbf{I}_d$. Writing $\textbf{T}_\alpha(\textbf{U}) := (\textbf{T}_\alpha(\textbf{u}_1), \ldots, \textbf{T}_\alpha(\textbf{u}_d))^T \in \mathbb{R}^{d \times p}$ we then get, as in \cite{miettinen2014fourth}, the following compact matrix form
\begin{equation}\label{eq:ee1}
\textbf{U} \textbf{T}_\alpha(\textbf{U})^T = \textbf{T}_\alpha(\textbf{U}) \textbf{U}^T \ \ \mbox{and} \ \  \textbf{U}\textbf{U}^T = \textbf{I}_d.
\end{equation}
Alternatively, substituting the solved multipliers back in as in the derivation of Algorithm \ref{algo:sep} we get a third set of estimating equations:
\begin{equation}\label{eq:ee2}
\textbf{T}_\alpha(\textbf{U})^T = \textbf{U}^T \textbf{U} \textbf{T}_\alpha(\textbf{U})^T.
\end{equation}
Assume then a fixed $\textbf{U}$ that satisfies the estimating equations in \eqref{eq:ee1} and \eqref{eq:ee2}. The matrix $\textbf{U} \textbf{T}_\alpha(\textbf{U})^T$ being symmetric by \eqref{eq:ee2} admits the eigendecomposition $\textbf{U} \textbf{T}_\alpha(\textbf{U})^T = \textbf{V} \textbf{D} \textbf{V}^T$ where $\textbf{V} \in \mathcal{U}$. Plugging then the decomposition in \eqref{eq:ee2} yields $\textbf{T}_\alpha(\textbf{U})^T = \textbf{U}^T \textbf{V} \textbf{D} \textbf{V}^T$, the singular value decomposition of $\textbf{T}_\alpha(\textbf{U})^T$. Assume then that the matrix $\textbf{T}_\alpha(\textbf{U})^T$ is of full column rank which implies that $\textbf{D}$ is positive definite and thus invertible. 

Next, substituting the expression for $\textbf{T}_\alpha(\textbf{U})^T$  in to the formula iterated in Algorithm \ref{algo:sym1}, $(\textbf{T}_\alpha(\textbf{U}) \textbf{T}_\alpha(\textbf{U})^T)^{-1/2} \textbf{T}_\alpha(\textbf{U})$, shows that the solution $\textbf{U}$ must satisfy
\begin{align*}
(\textbf{T}_\alpha(\textbf{U}) \textbf{T}_\alpha(\textbf{U})^T)^{-1/2} \textbf{T}_\alpha(\textbf{U}) = \textbf{U}.
\end{align*}
That the global maximum $\textbf{U}$ we are trying to estimate indeed yields $\textbf{T}_\alpha(\textbf{U})^T$ with full column rank is easily checked by plugging-in, thus yielding the proposed algorithm.



\end{proof}

For the proof of Theorem \ref{theo:sym_asymp} we need the following lemma.

\begin{aplemma}\label{lem:sym_asymp}
Let $\textbf{x}_1,\ldots,\textbf{x}_n$ be a random sample  from the independent component model in \eqref{eq:icm_icmodel} with $\boldsymbol{\Omega}=\textbf{I}_p$.
Assume that the eighth moments exist and that $\min_{1\le j \le d} \{\alpha\gamma_j^2+(1-\alpha)\kappa_j^2 \}> 0$.
Then there exists a sequence of solutions $\hat{\textbf{W}}$ such that $\hat{\textbf{W}} \rightarrow_P (\textbf{I}_d, \textbf{0})$ and
\begin{align*}
\sqrt{n} \hat{w}_{kl} &= \frac{3 \alpha \sqrt{n} \hat{\xi}_{1kl} + 4 (1 - \alpha)  \sqrt{n} \hat{\xi}_{2kl}}{3 \alpha (\gamma_k^2 + \gamma_l^2) + 4 (1 - \alpha) (\kappa_k^2 + \kappa_l^2)} + o_P(1), &\quad l \leq d, l \neq k, \\
\sqrt{n} (\hat{w}_{kk} - 1) &= -\frac{1}{2} \sqrt{n} (\hat{s}_{kk} - 1) + o_P(1),  \\
\sqrt{n} \hat{w}_{kl} &= \frac{3 \alpha \sqrt{n} \hat{\psi}_{1kl} + 4 (1 - \alpha) \sqrt{n} \hat{\psi}_{2kl}}{3 \alpha \gamma_k^2 + 4 (1 - \alpha) \kappa_k^2} + o_P(1), & l > d,
\end{align*}
where $\hat{\xi}_{1kl} = \gamma_k \hat{r}_{kl} - \gamma_l \hat{r}_{lk} - \gamma_k^2 \hat{s}_{kl}$, $\hat{\xi}_{2kl} = \kappa_k \hat{q}_{kl} - \kappa_l \hat{q}_{lk} - (\kappa_k \beta_k - 3\kappa_l) \hat{s}_{kl}$, $\hat{\psi}_{1kl} = \gamma_k \hat{r}_{kl} - \gamma_k^2 \hat{s}_{kl}$ and $\hat{\psi}_{2kl} = \kappa_k \hat{q}_{kl} - \kappa_k \beta_k \hat{s}_{kl}$.
\end{aplemma}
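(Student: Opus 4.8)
The plan is to follow the proof of Lemma~\ref{lem:sep_asymp} verbatim wherever possible, replacing only the deflation estimating equations by the symmetric ones \eqref{eq:ee1}--\eqref{eq:ee2} obtained in the derivation of Algorithm~\ref{algo:sym1}. Consistency is argued exactly as before: the summed sample objective converges uniformly over the compact set $\mathcal{U}^{d\times p}$ of matrices with orthonormal rows to its population counterpart, so a sequence of maximizers satisfies $\hat{\textbf{U}}\rightarrow_P(\textbf{I}_d,\textbf{0})$ and hence $\hat{\textbf{W}} = \hat{\textbf{U}}\hat{\textbf{S}}^{-1/2}\rightarrow_P(\textbf{I}_d,\textbf{0})$. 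The auxiliary quantities $\hat{h}_{3k},\hat{\textbf{t}}_{3k},\hat{h}_{4k},\hat{\textbf{t}}_{4k}$, their probability limits, and the linearizations \eqref{eq:app_t3k} and \eqref{eq:app_t4k} were all established for an arbitrary consistent $\hat{\textbf{W}}$ and therefore carry over unchanged, as does the Slutsky-type assembly of $\hat{\textbf{T}}_{\alpha k} = 3\alpha\hat{h}_{3k}\hat{\textbf{t}}_{3k} + 4(1-\alpha)\hat{h}_{4k}\hat{\textbf{t}}_{4k}$.

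I would then split the $d\times p$ estimate into three regimes and treat each through a different estimating equation. The diagonal entries $l=k$ come solely from the normalization $\hat{\textbf{W}}\hat{\textbf{S}}\hat{\textbf{W}}^T = \textbf{I}_d$, whose $(k,k)$ component linearizes to $\sqrt{n}(\hat{w}_{kk}-1) = -\tfrac12\sqrt{n}(\hat{s}_{kk}-1) + o_P(1)$, identical to the deflation case. The noise entries $l>d$ are governed by the $l$th row of the projection equation \eqref{eq:ee2}; since $\gamma_l=\kappa_l=0$ for noise indices, every term carrying an $l$-gradient drops out and the linearization collapses to exactly the expression found in Lemma~\ref{lem:sep_asymp}, yielding the $\hat{\psi}$ form with denominator $3\alpha\gamma_k^2 + 4(1-\alpha)\kappa_k^2$.

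The real work, and the main obstacle, is the signal block $l\le d,\ l\neq k$. Unlike deflation, whose sequential construction fixes row $l$ before row $k$ and lets the below-diagonal entries be recovered from orthogonality alone, the symmetric functional imposes no ordering, so $\sqrt{n}\hat{w}_{kl}$ and $\sqrt{n}\hat{w}_{lk}$ must be solved for jointly. Here I would pair two linearized equations: the off-diagonal normalization $(\hat{\textbf{W}}\hat{\textbf{S}}\hat{\textbf{W}}^T)_{kl}=0$, giving the symmetric ``sum'' relation $\sqrt{n}\hat{w}_{kl} + \sqrt{n}\hat{w}_{lk} + \sqrt{n}\hat{s}_{kl} = o_P(1)$, and the symmetry condition of \eqref{eq:ee1} expressed in the $(\hat{\textbf{W}},\hat{\textbf{S}})$ coordinates, which after substituting \eqref{eq:app_t3k}--\eqref{eq:app_t4k} gives a second, asymmetrically weighted relation between the same two unknowns.

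The resulting $2\times2$ linear system is what produces the distinctive $\hat{\xi}$ expressions: solving it injects the cross terms $-\gamma_l\hat{r}_{lk}$, $-\kappa_l\hat{q}_{lk}$ and $+3\kappa_l\hat{s}_{kl}$ --- the precise contributions of the $l$th gradient that distinguish $\hat{\xi}$ from $\hat{\psi}$ --- while its symmetric determinant yields the denominator $3\alpha(\gamma_k^2+\gamma_l^2) + 4(1-\alpha)(\kappa_k^2+\kappa_l^2)$. I expect the most delicate bookkeeping to be tracking the $3\hat{w}_{kl}$ and $3\hat{w}_{lk}$ terms contributed by $\hat{\textbf{t}}_{4k}$ and $\hat{\textbf{t}}_{4l}$ in \eqref{eq:app_t4k} consistently across both equations of the system; once these are placed correctly, isolating $\sqrt{n}\hat{w}_{kl}$ is a routine inversion, and with all three regimes linearized the lemma is complete.
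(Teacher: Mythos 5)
Your proposal matches the paper's proof essentially step for step: consistency via the uniform law of large numbers on the compact set of orthonormal-row matrices, reuse of the linearizations \eqref{eq:app_t3k}--\eqref{eq:app_t4k}, the projection equation \eqref{eq:ee2} for the diagonal and noise ($l>d$) entries, and the symmetry condition \eqref{eq:ee1} paired with the linearized ``sum'' relation $\sqrt{n}\hat{w}_{kl}+\sqrt{n}\hat{w}_{lk}+\sqrt{n}\hat{s}_{kl}=o_P(1)$, solved jointly as a $2\times 2$ system to produce the $\hat{\xi}$ numerators and the denominator $3\alpha(\gamma_k^2+\gamma_l^2)+4(1-\alpha)(\kappa_k^2+\kappa_l^2)$. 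The only cosmetic difference is that the paper obtains the sum and diagonal relations from the element-wise inspection of the \eqref{eq:app_nord5}-type identity with $\textbf{J}_k$ replaced by $\textbf{J}_d$, rather than directly from the normalization $\hat{\textbf{W}}\hat{\textbf{S}}\hat{\textbf{W}}{}^T=\textbf{I}_d$ as you phrase it, but these are the same relations.
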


\begin{proof}[Proof of Lemma \ref{lem:sym_asymp}]
The uniform convergence in probability of the sample objective function, $D_n(\textbf{U}) = \sum_{k=1}^d D_n(\textbf{u}_k)$, to the population one, $D(\textbf{U}) = \sum_{k=1}^d D(\textbf{u}_k)$, (see the proof of Lemma \ref{lem:sep_asymp} for the definitions of $D_n(\textbf{u}_k)$ and $D(\textbf{u}_k)$) in $\mathcal{U}$ can be shown as in the proof of Theorem 6 in \cite{miettinen2014fourth}: by observing that $D_n(\textbf{U})$ and $D(\textbf{U})$ are continuous and $\mathcal{U}$ is compact and then using the uniform law of large numbers. The population objective function is then maximized by any $(\textbf{JP}, \textbf{0})$ and, specifically, there exists a sequence of solutions that satisfies $\hat{\textbf{U}} \rightarrow_P (\textbf{I}_d, \textbf{0})$ implying that $\hat{\textbf{W}} = \hat{\textbf{U}} \hat{\textbf{S}}{}^{-1/2} \rightarrow_P (\textbf{I}_d, \textbf{0})$.

For the asymptotic behavior itself, we have to use two different sets of estimating equations, the first one in \eqref{eq:ee1} and the one in \eqref{eq:ee2}. Starting with the latter, the sample version for the $k$th column of it is exactly as \eqref{eq:sep_sample} but with $\sum_{j=1}^k \hat{\textbf{w}}_j \hat{\textbf{w}}_j{}^T$ replaced by $\sum_{j=1}^d \hat{\textbf{w}}_j \hat{\textbf{w}}_j{}^T$ for all $k$. Consequently, we have for all $k$ the identities \eqref{eq:app_nord5} with $\textbf{J}_k$ replaced by $\textbf{J}_d$ and the upper limit of the sum replaced by $d$.

Observing then the $l$th component with either $l \leq d$ or $l > d$ in the equivalents of \eqref{eq:app_nord5} gives using the techniques of the proof of Lemma \ref{lem:sep_asymp} the identities
\begin{align*}
0 &= \sqrt{n} \hat{s}_{lk} + \sqrt{n} \hat{w}_{lk} + \sqrt{n} \hat{w}_{kl} + o_P(1), \quad &l \leq d, \ l \neq k, \\
0 &= \sqrt{n} (\hat{s}_{kk} - 1) + 2 \sqrt{n} (\hat{w}_{kk} - 1) + o_P(1), \quad &l \leq d,
\end{align*} 
and
\begin{align*}
\quad & 3 \alpha \gamma_k \sqrt{n} \hat{r}_{kl} + 4 (1 - \alpha) \kappa_k ( \sqrt{n} \hat{q}_{kl} + 3 \sqrt{n} \hat{w}_{kl})\\
= \quad & (3 \alpha \gamma_k^2 + 4 (1 - \alpha) \kappa_k \beta_k)(\sqrt{n} \hat{s}_{lk} + \sqrt{n} \hat{w}_{kl}) + o_P(1), \quad l > d.
\end{align*}
The first identity of the above three gives the asymptotic behavior for the off-diagonal element $(k, l)$ in the matrix $\hat{\textbf{W}}_1$ assuming that the asymptotic behavior of the element $(l, k)$ is known; the second one gives the asymptotic behavior of the diagonal elements of $\hat{\textbf{W}}_1$ and the third one gives the asymptotic behavior of the whole $\hat{\textbf{W}}_2$. The last two already yield the second and third claims of the lemma but for the first part we still need the actual asymptotic expressions for the lower triangle of $\hat{\textbf{W}}_1$ and those are provided by the first set of estimating equations in \eqref{eq:ee1}.

The sample versions of them for $k,l = 1,\ldots,d$ are
\[3 \alpha \hat{h}_{3k} \hat{\textbf{w}}_l^T \hat{\textbf{t}}_{3k} + 4 (1 - \alpha) \hat{h}_{4k} \hat{\textbf{w}}_l^T \hat{\textbf{t}}_{4k} = 3 \alpha \hat{h}_{3l} \hat{\textbf{w}}_k^T \hat{\textbf{t}}_{3l} + 4 (1 - \alpha) \hat{h}_{4l} \hat{\textbf{w}}_k^T \hat{\textbf{t}}_{4l}, \numberthis \label{eq:sym_ee}  \]
where $\hat{h}_{3k}$, $\hat{h}_{4k}$, $\hat{\textbf{t}}_{3k}$ and $\hat{\textbf{t}}_{4k}$ are as in the proof of Lemma \ref{lem:sep_asymp}. With an approach similar to the one used in the proof of Theorem 6 in \cite{miettinen2014fourth} we have
\[\sqrt{n} \hat{h}_{3l} \hat{\textbf{w}}_k^T \hat{\textbf{t}}_{3l} = \gamma_l \sqrt{n} (\hat{\textbf{w}}_k - \textbf{e}_k)^T \gamma_l \textbf{e}_l + \gamma_l \textbf{e}_k^T \sqrt{n} (\hat{\textbf{t}}_{3l} - \gamma_l \textbf{e}_l) + o_P(1), \]
and
\[\sqrt{n} \hat{h}_{4l} \hat{\textbf{w}}_k^T \hat{\textbf{t}}_{4l}  = \kappa_l \sqrt{n} (\hat{\textbf{w}}_k - \textbf{e}_k)^T \beta_l \textbf{e}_l + \kappa_l \textbf{e}_k^T \sqrt{n} (\hat{\textbf{t}}_{4l} - \beta_l \textbf{e}_l) + o_P(1). \]

Substituting \eqref{eq:app_t3k} and \eqref{eq:app_t4k} into the above expansions and then plugging in into \eqref{eq:sym_ee} gives the identity
\begin{align*}
& 3 \alpha (\gamma_k^2 \sqrt{n} \hat{w}_{lk} + \gamma_k \sqrt{n} \hat{r}_{kl}) + 4 (1 - \alpha) (\beta_k \kappa_k \sqrt{n} \hat{w}_{lk} + \kappa_k \sqrt{n} \hat{q}_{kl} + 3 \kappa_k \sqrt{n} \hat{w}_{kl}) \\
= & 3 \alpha (\gamma_l^2 \sqrt{n} \hat{w}_{kl} + \gamma_l \sqrt{n} \hat{r}_{lk}) + 4 (1 - \alpha) (\beta_l \kappa_l \sqrt{n} \hat{w}_{kl} + \kappa_l \sqrt{n} \hat{q}_{lk} + 3 \kappa_l \sqrt{n} \hat{w}_{lk}) + o_P(1),
\end{align*}
into which the first asymptotic expression obtained earlier from the second set of estimating equations can be substituted to express $\sqrt{n}\hat{w}_{lk}$ using $\sqrt{n}\hat{w}_{kl}$, finally yielding the missing first part of the claim.
\end{proof}

\begin{proof}[Proof of Theorem \ref{theo:sym_asymp}]
The expressions of Theorem \ref{theo:sym_asymp} are obtained similarly as in the proof of Theorem \ref{theo:sep_asymp}.
\end{proof}

\subsection{Proofs of Section \ref{sec:simu}}

\begin{proof}[Proof of Theorem \ref{theo:as independence}]
Assume that in the model \eqref{eq:icm_icmodel} the latent vectors $\textbf{z}_i = (\textbf{s}_i^T, \textbf{n}_i^T)^T$ are linearly transformed as
\[ \begin{pmatrix}
\textbf{s}_i \\
\textbf{n}_i
\end{pmatrix}
\mapsto
\begin{pmatrix}
\textbf{I}_d & \textbf{0} \\
\textbf{0} & \textbf{U} 
\end{pmatrix}
\begin{pmatrix}
\textbf{s}_i \\
\textbf{n}_i
\end{pmatrix},
\]
where $\textbf{U}$ is a $(p-d) \times (p-d)$ orthogonal matrix. As $\textbf{n}_i$ has the standard multivariate normal distribution this transformation leaves the distribution of the $\textbf{z}_i$ unchanged. Denoting the above block-diagonal transformation matrix as $\textbf{K}$ it follows that the signal separation functionals $\hat{\textbf{W}}(\textbf{z}_i)$ and $\hat{\textbf{W}}(\textbf{K}\textbf{z}_i) = \hat{\textbf{W}}(\textbf{z}_i)\textbf{K}^{-1}$, where the equality follows from the affine equivariance of the functional (assume that we have ordered and changed the signs of the rows of the latter functional to match the rows of the former functional), are identically distributed. Thus, in particular, the covariance matrices of their limiting distributions are the same:
\[\mbox{AsCov}\left(\mbox{vec}(\hat{\textbf{W}}(\textbf{z}_i))\right) = \mbox{AsCov}\left(\mbox{vec}(\hat{\textbf{W}}(\textbf{z}_i)\textbf{K}^{-1})\right), \]
for all orthogonal $\textbf{U}$. Using the identity $\mbox{vec}(\textbf{ABC}) = (\textbf{C}^T \otimes \textbf{A}) \mbox{vec}(\textbf{B})$, where $\otimes$ is the Kronecker product, and denoting the left-hand side of the above equation by $\textbf{A}$ yields
\begin{align}\label{eq:ascov_comm}
\textbf{A} = \left( \begin{pmatrix}
\textbf{I}_d & \textbf{0} \\
\textbf{0} & \textbf{U} 
\end{pmatrix} \otimes \textbf{I}_d \right) \textbf{A}
\left( \begin{pmatrix}
\textbf{I}_d & \textbf{0} \\
\textbf{0} & \textbf{U}^T 
\end{pmatrix} \otimes \textbf{I}_d \right),
\end{align}
for all orthogonal $\textbf{U}$. So in particular \eqref{eq:ascov_comm} has to hold when $\textbf{U} \in \mathcal{J} \subset \mathcal{U}$ and inspecting various cases shows that this can hold only if $\textbf{A}$ is block-diagonal with $p-d+1$ blocks of sizes $d^2\times d^2$, $d\times d,\ldots,d\times d$. Furthermore, noticing that \eqref{eq:ascov_comm} must hold for all $\textbf{U} \in \mathcal{P} \subset \mathcal{U}$ and again inspecting element-wise shows that it is necessary for the final $d$ blocks to be identical.
\end{proof}

\newcommand{\matw}{\hat{\textbf{W}}}
\newcommand{\mati}{(\textbf{I}_d, \textbf{0})}

\begin{proof}[Proof of Theorem \ref{theo:proj_asymp}]
First, expanding the projection matrix as
\begin{align*}
\hat{\textbf{P}} - \mati^T \mati &= (\matw{}^T - \mati^T) (\matw \matw{}^T)^{-1} \matw \\
&+ \mati^T((\matw \matw{}^T)^{-1} - \textbf{I}_d) \matw + \mati^T(\matw - \mati),
\end{align*}
and then using Slutsky's theorem yields
\begin{align*}
\sqrt{n}(\hat{\textbf{P}} - \mati^T \mati) &= \sqrt{n}(\matw{}^T - \mati^T) \mati \\ 
&+ \mati^T \sqrt{n}((\matw \matw{}^T)^{-1} - \textbf{I}_d)\mati \\
&+ \mati^T \sqrt{n}(\matw - \mati) + o_P(1).
\end{align*}
An alternative expression for the inverse term is obtained by noticing that $\sqrt{n}((\matw \matw{}^T)^{-1}(\matw \matw{}^T) - \textbf{I}_d) = \textbf{0}$ and expanding the left-hand side in above manner and again using Slutsky's theorem. This yields
\begin{align*}
\sqrt{n}((\matw \matw{}^T)^{-1} - \textbf{I}_d) &= -\sqrt{n}(\matw - \mati) \mati^T \\ 
&- \mati \sqrt{n}(\matw{}^T - \mati^T) + o_P(1),
\end{align*}
which, when substituted into the expression for $\sqrt{n}(\hat{\textbf{P}} - \mati^T \mati)$ above along with the fact that $\matw = (\matw_1, \matw_2)$, yields, after some simplification, the desired result.

\end{proof}

\subsection{The images of the real data example in Section \ref{sec:sub}}

The three true images and the 15 mixed images obtained by mixing the true images with 12 images of independent Gaussian noise in the real data example of Section \ref{sec:sub} are shown in Figures \ref{fig:true_pics} and \ref{fig:mix_pics}, respectively.

\begin{figure}[t]
    \centering
    \includegraphics[width=1.0\textwidth]{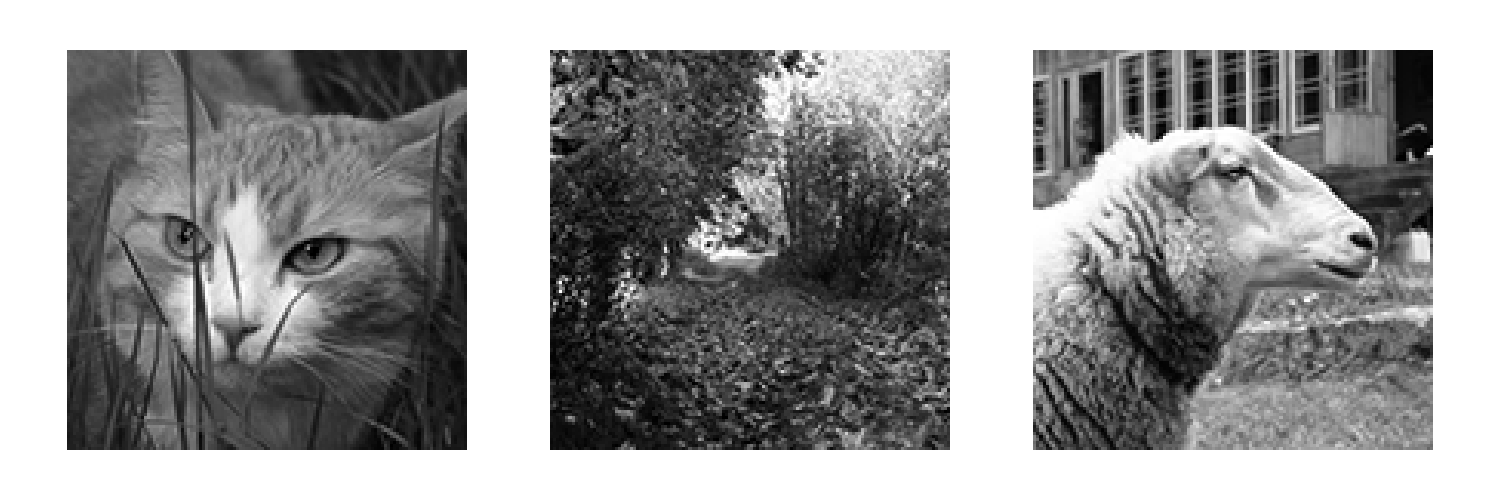}
    \caption{The three true images used in the real data example in Section \ref{sec:sub}.}
    \label{fig:true_pics}
\end{figure}

\begin{figure}[t]
    \centering
    \includegraphics[width=1.0\textwidth]{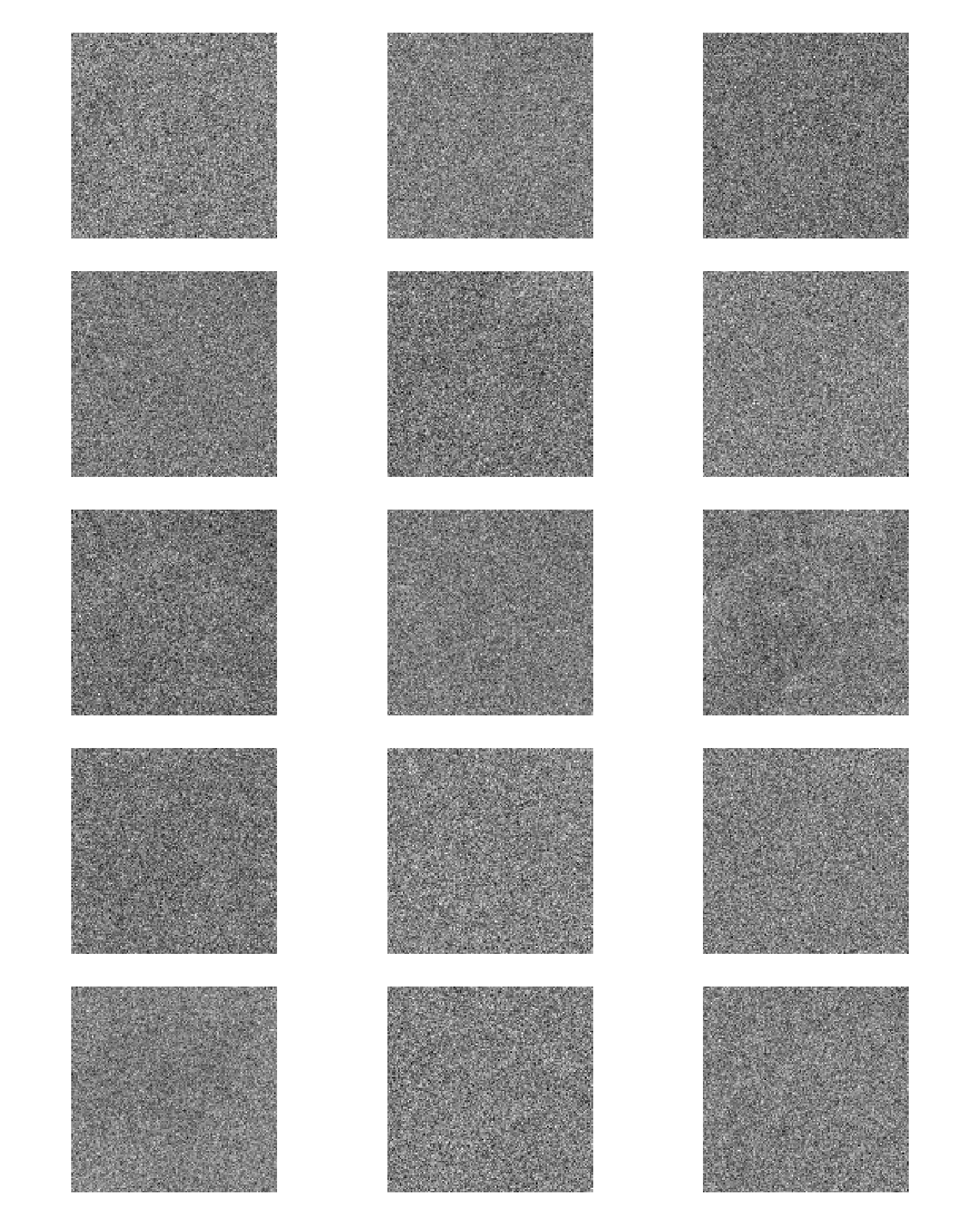}
    \caption{The 15 mixed images obtained by mixing the true images with 12 images of independent Gaussian noise in the real data example in Section \ref{sec:sub}. No discernible signal is visible in any of them.}
    \label{fig:mix_pics}
\end{figure}

\bibliographystyle{Chicago}

\bibliography{references}
\end{document}